\newtheorem{prop}{Proposition}[section]
\newtheorem{thm}[prop]{Theorem}
\newtheorem{lem}[prop]{Lemma}
\newtheorem{cor}[prop]{Corollary}
\theoremstyle{definition}
\newtheorem{defn}[prop]{Definition}
\newtheorem{ex}[prop]{Example}
\newtheorem{exs}[prop]{Examples}
\newtheorem{rem}[prop]{Remark}
\newtheorem*{ack}{Acknowledgements}
\numberwithin{equation}{section}
\def\co{\colon\thinspace}
\newcommand{\ip}{{\,\rule{2.3mm}{.2mm}\rule{.2mm}{2.3mm}\;\, }}
\newcommand{\oa}{\overline{a}}
\newcommand{\alphac}{\alpha_{\mathrm{c}}}
\newcommand{\oalphac}{\overline{\alpha}_{\mathrm{c}}}
\newcommand{\C}{\mathbb{C}}
\newcommand{\CC}{\mathcal{C}}
\newcommand{\D}{\mathbb{D}}
\newcommand{\DD}{\mathcal{D}}
\newcommand{\rmd}{\mathrm{d}}
\newcommand{\rme}{\mathrm{e}}
\newcommand{\EE}{\mathcal{E}}
\newcommand{\FF}{\mathcal{F}}
\newcommand{\rmi}{\mathrm{i}}
\newcommand{\blambda}{\mbox{\boldmath{$\lambda$}}}
\newcommand{\MM}{\mathcal{M}}
\newcommand{\bm}{\mathbf{m}}
\newcommand{\N}{\mathbb{N}}
\newcommand{\oomega}{\overline{\omega}}
\newcommand{\omegac}{\omega_{\mathrm{c}}}
\newcommand{\oomegac}{\overline{\omega}_{\mathrm{c}}}
\newcommand{\Q}{\mathbb{Q}}
\newcommand{\PP}{\mathcal{P}}
\newcommand{\ophi}{\overline{\phi}}
\newcommand{\R}{\mathbb{R}}
\newcommand{\Z}{\mathbb{Z}}
\newcommand{\oz}{\overline{z}}
\DeclareMathOperator{\Int}{Int}
\DeclareMathOperator{\re}{Re}
\DeclareMathOperator{\sign}{sign}
\begin{document}

\author[H.~Geiges]{Hansj\"org Geiges}
\address{Mathematisches Institut, Universit\"at zu K\"oln,
Weyertal 86--90, 50931 K\"oln, Germany}
\email{geiges@math.uni-koeln.de}
\author[J.~Gonzalo P\'erez]{Jes\'us Gonzalo P\'erez}
\address{Departamento de Matem\'aticas, Universidad Aut\'onoma
de Madrid, 28049 Madrid, Spain}
\email{jesus.gonzalo@uam.es}

\title[Transversely holomorphic flows]{Transversely holomorphic flows
and contact circles on spherical $3$-manifolds}

\date{}

\begin{abstract}
Motivated by the moduli theory of taut contact circles on
spherical $3$-manifolds, we relate taut contact circles
to transversely holomorphic flows. We give an elementary survey
of such $1$-dimensional foliations from a topological viewpoint. We
describe a complex analogue of the classical Godbillon--Vey invariant,
the so-called Bott invariant, and a logarithmic monodromy of closed leaves.
The Bott invariant allows us to formulate a generalised Gau{\ss}--Bonnet
theorem. We compute these invariants for the Poincar\'e foliations on the
$3$-sphere and derive rigidity statements, including
a uniformisation theorem for orbifolds. These results are then
applied to the classification of taut contact circles.
\end{abstract}

\subjclass[2010]{53C12; 53D35, 57M50, 57R30, 58D27}

\thanks{J. G. is partially supported by grant MTM2014-57769-C3-3-P from
MINECO Spain.}
\maketitle


\section{Introduction}
Transversely holomorphic flows on $3$-manifolds have been
classified by Brunel\-la~\cite{brun96} and Ghys~\cite{ghys96}.
The taut contact circles (Definition~\ref{defn:tcc})
studied by us in a series of papers
beginning with \cite{gego95} are special instances of such
transversely holomorphic flows. Indeed, the classification
in \cite{brun96} of $3$-manifolds that admit a transversely holomorphic
flow follows a route
via the Enriques--Kodaira classification of complex surfaces
similar to the one taken in~\cite{gego95}.

In \cite{gego02} we indicated that the moduli theory of
taut contact circles on spherical $3$-manifolds admits
a nice reformulation in terms of an invariant for
transversely holomorphic flows, which, it turns out,
is the basic incarnation of a secondary characteristic class
first constructed by Bott~\cite{bott72}.

In order to develop this moduli theory in a way accessible to contact
geometers, we present in this paper a detailed survey of transversely
holomorphic flows (or oriented $1$-dimensional foliations) on $3$-manifolds,
notably on the $3$-sphere $S^3$. For it is only on manifolds covered
by $S^3$ that this moduli problem is linked in an intriguing fashion with
the common kernel foliation of the taut contact circle.

We describe the construction of the Bott class
(Definition~\ref{defn:GV}), a global
invariant for transversely holomorphic flows, as a direct complex
analogue of the Godbillon--Vey invariant~\cite{gove71}. We also
introduce a logarithmic monodromy for closed leaves in such foliations
(Definition~\ref{defn:log-m}), which can be interpreted as
a simple instance of the residue theory for transversely holomorphic
foliations developed by Heitsch~\cite{heit78}.
We use the Bott invariant to formulate
a generalised Gau{\ss}--Bonnet theorem (Theorem~\ref{thm:GV-real}),
from which we deduce the classical Gau{\ss}--Bonnet theorem
in Corollary~\ref{cor:GB}.

Motivated by the moduli problem for taut contact circles~\cite{gego02},
we then turn our attention to transversely holomorphic foliations
on the $3$-sphere $S^3$; these are the so-called
Poincar\'e foliations of~\cite{brun96}.
The Bott invariant turns out to be the moduli
parameter in each of two families of taut contact circles.

We give explicit models for the transversely holomorphic foliations
on $S^3$ and show this list to be exhaustive (Theorem~\ref{thm:all-P})
by appealing to the Poincar\'e--Dulac normalisation theorem
for Poincar\'e singularities.
We compute the Bott invariant of these
foliations, and the logarithmic monodromy of their closed
leaves.

Section~\ref{section:topology} is devoted to a detailed study of the
topology of transversely holomorphic foliations on~$S^3$.
With the aid of associated $2$-dimensional foliations
we provide means to visualise these foliations. This includes
an analysis of the asymptotic behaviour of the non-compact leaves,
and the Poincar\'e return map of compact ones.
The figures in Section~\ref{section:topology} give an inkling
of the rich geometry and dynamics displayed by transversely
holomorphic foliations.

The calculations of the invariants from Sections
\ref{section:S3} and~\ref{section:log}, together with some information gained
from the explicit descriptions of the Poincar\'e foliations
in Section~\ref{section:topology}, are then used to prove a number
of rigidity results, for instance about the uniqueness of
the transverse holomorphic structure (Theorem~\ref{thm:unique-hol}).
Within the realm of taut contact circles, we show that
the classification can be given in terms of the
common kernel foliation (Theorem~\ref{thm:common-kernel}).
An application of these rigidity results is a uniformisation
theorem for orbifolds (Theorem~\ref{thm:orbi}), which has been
proved previously using the Ricci flow.

In the case where the transversely holomorphic foliation defines
a Seifert fibration, we determine the Seifert invariants
explicitly (Proposition~\ref{prop:Seifert}). In the context
of the rigidity results, 
we make an observation about Seifert fibrations
of $S^3$ and lens spaces (Proposition~\ref{prop:Seifert-S-L})
that may be of independent interest.

Much of what we say about transversely holomorphic flows
on $3$-manifolds, except probably for the generalised
Gau{\ss}--Bonnet theorem and the explicit analysis
of the Poincar\'e foliations, can be found in some form in the specialist
literature. We hope that our survey of the relevant material will not
only make this paper self-contained from a contact geometric
perspective, but also serve as an introduction to the
beautiful theory of transversely holomorphic flows for a wider
audience.
\section{Transversely holomorphic flows and taut contact circles}
\label{section:thf-tcc}
Let $Y$ be a nowhere zero vector field on a closed, oriented $3$-manifold~$M$.
The flow (or the foliation) generated by $Y$ is said to be
\emph{transversely holomorphic} if there is a complex structure $J$
on the $2$-plane bundle $TM/\langle Y\rangle$ invariant under the
flow of~$Y$. This is equivalent to having a transverse conformal
structure and a transverse orientation.

We shall restrict attention to the case where the bundle $TM/\langle Y\rangle$
is trivial. For the study of transversely holomorphic flows on the
$3$-sphere this is no restriction.
Given any nowhere zero vector field $Y$ with this triviality condition,
one can find a pair of pointwise linearly
independent $1$-forms $\omega_1,\omega_2$ on $M$ whose
common kernel $\ker\omega_1\cap\ker\omega_2$ is spanned by~$Y$,
and such that $\omega_1\wedge\omega_2$ defines the transverse
orientation. We introduce the complex-valued
$1$-form $\omegac:=\omega_1+\rmi\omega_2$, and we write
$L_Y$ for the Lie derivative with respect to~$Y$.

\begin{defn}
\label{defn:transverse}
\begin{itemize}
\item[(C1)] The pair $(\omega_1,\omega_2)$ is said to define a
\emph{transverse conformal structure} for the flow of $Y$ if
there is a real-valued function $f$ on $M$ such that
\[ L_Y(\omega_1\otimes\omega_1+\omega_2\otimes\omega_2)=
f(\omega_1\otimes\omega_1+\omega_2\otimes\omega_2).\]

\item[(C2)] The $1$-form $\omegac$ is said to define a \emph{transverse
holomorphic structure} for the flow of $Y$
if there is a complex-valued function $h$
on $M$ such that $L_Y\omegac=h\omegac$.

\item[(C3)] The $1$-form $\omegac$ is \emph{formally integrable}
if $\omegac\wedge\rmd\omegac=0$.
\end{itemize}
\end{defn}

Condition (C2) is equivalent to our more `naive' definition of
a transverse holomorphic structure above (in the case
where $TM/\langle Y\rangle$ is trivial).
In the situation of (C2), the flow of $Y$ pulls back $\omegac$
to a complex multiple of itself, cf.\ \cite[Lemma~1.5.8]{geig08},
and so the flow preserves the complex structure on $TM/\langle Y\rangle$
defined by the dual basis to $(\omega_1,\omega_2)$; the converse argument
is similar.

Conditions (C1) and (C2)
do not depend on the specific choice of~$Y$.
This means that `transversely conformal resp.\
holomorphic' is really a property of the line field $\langle Y\rangle$
or the foliation it defines. An alternative interpretation of this property,
more common in foliation theory, is that the holonomy pseudogroup of the
foliation consists of biholomorphisms between open subsets of~$\C$.
The terminology `flow' emphasises the fact that these foliations
come with a natural orientation induced from the transverse and the
ambient orientation.

\begin{lem}
Conditions {\rm (C1)} to {\rm (C3)}
are equivalent. A further equivalent condition is:

\begin{itemize}
\item[{\rm (C4)}] The pair $(\omega_1,\omega_2)$ satisfies the identities
\[ \begin{array}{rcr}
\omega_1\wedge\rmd\omega_1 & = & \omega_2\wedge\rmd\omega_2,\\
\omega_1\wedge\rmd\omega_2 & = & -\omega_2\wedge\rmd\omega_1.
\end{array} \]
\end{itemize}
\end{lem}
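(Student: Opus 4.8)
The plan is to reduce all four conditions to the vanishing of a single scalar function. First I would complete $(\omega_1,\omega_2)$ to a global coframe $(\omega_1,\omega_2,\alpha)$ by choosing a $1$-form $\alpha$ with $\alpha(Y)=1$ and $\iota_Y\omega_i=0$ (possible since $\langle Y\rangle$ is a trivial, oriented line bundle). In this coframe the complex-valued $2$-form $\rmd\omegac$ has a unique expansion
\[ \rmd\omegac = A\,\omegac\wedge\alpha + B\,\oomegac\wedge\alpha + C\,\omegac\wedge\oomegac, \]
with complex functions $A,B,C$, because $\{\omegac\wedge\alpha,\oomegac\wedge\alpha,\omegac\wedge\oomegac\}$ is a frame for the complexified $\Lambda^2T^*M$. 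My claim is that the single coefficient $B$ is the obstruction, and that each of {\rm (C1)}--{\rm (C3)} is equivalent to $B=0$.

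Using $\iota_Y\omegac=\iota_Y\oomegac=0$ and $\iota_Y\alpha=1$, Cartan's formula gives $L_Y\omegac=\iota_Y\rmd\omegac=-A\,\omegac-B\,\oomegac$; hence {\rm (C2)} holds (with $h=-A$) precisely when $B=0$. Wedging the expansion with $\omegac$ and using $\omegac\wedge\omegac=0$ leaves $\omegac\wedge\rmd\omegac=B\,\omegac\wedge\oomegac\wedge\alpha$, a multiple of the volume form $\omegac\wedge\oomegac\wedge\alpha=-2\rmi\,\omega_1\wedge\omega_2\wedge\alpha$, so {\rm (C3)} is also equivalent to $B=0$. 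For {\rm (C1)} I would write $g=\omega_1\otimes\omega_1+\omega_2\otimes\omega_2=\tfrac12(\omegac\otimes\oomegac+\oomegac\otimes\omegac)$ and propagate the formula for $L_Y\omegac$ through the Leibniz rule, obtaining
\[ L_Y g = -(A+\overline{A})\,g - B\,\oomegac\otimes\oomegac - \overline{B}\,\omegac\otimes\omegac. \]
Since $\omegac\otimes\omegac$ and $\oomegac\otimes\oomegac$ are symmetric tensors independent of $g$, the right-hand side is a real multiple of $g$ exactly when $B=0$ (with $f=-(A+\overline{A})$), giving {\rm (C1)}$\Leftrightarrow B=0$.

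Finally, {\rm (C3)}$\Leftrightarrow${\rm (C4)} is purely formal: expanding $\omegac\wedge\rmd\omegac=(\omega_1+\rmi\omega_2)\wedge(\rmd\omega_1+\rmi\,\rmd\omega_2)$ and separating real and imaginary parts of this top-degree form produces $\omega_1\wedge\rmd\omega_1-\omega_2\wedge\rmd\omega_2$ and $\omega_1\wedge\rmd\omega_2+\omega_2\wedge\rmd\omega_1$; the two identities in {\rm (C4)} say exactly that both vanish, which is $\omegac\wedge\rmd\omegac=0$. The computations are routine bookkeeping in the chosen coframe; the one step needing genuine care is {\rm (C1)}, where I must verify that the extra tensors $\omegac\otimes\omegac$ and $\oomegac\otimes\oomegac$ really lie outside the line spanned by $g$ in the symmetric square, so that proportionality forces $B=0$ rather than merely constraining it — here the pointwise independence of $\omega_1,\omega_2$ and the reality of $f$ are both essential.
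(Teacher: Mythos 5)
Your proof is correct, and it is organised differently from the paper's. The paper never introduces the auxiliary coframe element $\alpha$: it expands $L_Y\omega_1$ and $L_Y\omega_2$ in the basis $(\omega_1,\omega_2)$ with real coefficients $a_{ij}$, characterises (C1) by the system $a_{11}=a_{22}$, $a_{12}=-a_{21}$, obtains (C2) by complexifying that system, proves (C1)$\Leftrightarrow$(C4) by the observation that two $3$-forms on a $3$-manifold agree if and only if their interior products with $Y$ agree, and dismisses (C3)$\Leftrightarrow$(C4) as trivial. You instead make everything a spoke of the single hub $B=0$, where $B$ is the $\oomegac\wedge\alpha$-component of $\rmd\omegac$; unwinding, $B$ equals $\tfrac12\bigl((a_{11}-a_{22})+\rmi(a_{12}+a_{21})\bigr)$ up to sign, so the substance is the same. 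What your route buys is a direct proof of (C3) in the coframe (the paper reaches (C3) only through (C4)) and a uniform picture in which the obstruction is visibly one complex-valued function; what it costs is the extra choice of $\alpha$ and the more delicate linear-independence check in the complexified symmetric square needed for (C1), which you correctly identify as the one point requiring care --- the paper's real computation of $L_Y(\omega_1\otimes\omega_1+\omega_2\otimes\omega_2)$ sidesteps this by reading off the coefficients of the $\omega_i\otimes\omega_j$ directly.
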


\begin{proof}
The Cartan formula for the Lie derivative gives $L_Y\omega_j=
Y\ip\rmd\omega_j$, hence $L_Y\omega_j$ annihilates~$Y$. This
implies the existence of smooth functions $a_{ij}$ such that
\begin{eqnarray*}
L_Y\omega_1 & = & a_{11}\omega_1+a_{12}\omega_2,\\
L_Y\omega_2 & = & a_{21}\omega_1+a_{22}\omega_2.
\end{eqnarray*}
We compute
\begin{eqnarray*}
L_Y(\omega_1\otimes\omega_1+\omega_2\otimes\omega_2)
 & = & 2a_{11}\omega_1\otimes\omega_1+2a_{22}\omega_2\otimes\omega_2\\
 &   & \mbox{}+(a_{12}+a_{21})
       (\omega_1\otimes\omega_2+\omega_2\otimes\omega_1).
\end{eqnarray*}
Hence, condition (C1) is equivalent to
\begin{equation}
\label{eqn:C}
\left\{\begin{array}{rcr}
a_{11} & = & a_{22},\\
a_{12} & = & -a_{21}.
\end{array}\right.
\end{equation}
The manifold $M$ being $3$-dimensional, two $3$-forms on $M$ are
equal if and only if they yield the same $2$-form under the
inner product with~$Y$. This inner product transforms
the first equality in (C4) into the second equality in~(\ref{eqn:C}),
and the second into the first. Thus, (C1) and (C4) are equivalent.

The system (\ref{eqn:C}) translates into
\[ L_Y(\omega_1+\rmi\omega_2)=(a_{11}-\rmi a_{12})(\omega_1+\rmi\omega_2).\]
This gives the equivalence between (C1) and (C2).

The equivalence between (C3) and (C4) is trivial to check.
\end{proof}

Recall the following concept from~\cite{gego95}:

\begin{defn}
\label{defn:tcc}
A \emph{taut contact circle} on a $3$-manifold is a pair of contact
forms $(\omega_1,\omega_2)$ such that the $1$-form $\lambda_1\omega_1
+\lambda_2\omega_2$ is a contact form defining the same volume form
for all $(\lambda_1,\lambda_2)\in S^1\subset\R^2$.
\end{defn}

This is equivalent to condition (C4), with the additional
contact requirement $\omega_j\wedge\rmd\omega_j\neq 0$.

In \cite{gego95} it was shown that a taut contact circle on a $3$-manifold
$M$ gives rise to a complex structure on $M\times S^1$. Via
the classification of complex surfaces we arrived at
a complete list of closed $3$-manifolds admitting taut contact
circles:

\begin{thm}
\label{thm:taut-manifolds}
A closed, connected
$3$-manifold $M$ admits a taut contact circle if and only if $M$
is diffeomorphic to a left-quotient of one of the following Lie
groups:
\begin{itemize}
\item[(i)] $\mathrm{SU}(2)$, the universal cover of $\mathrm{SO}(3)$,
\item[(ii)] $\tilde{\mathrm{E}}_2$, the universal
cover of the euclidean group,
\item[(iii)] $\widetilde{\mathrm{SL}}_2$, the universal cover
of $\mathrm{PSL}_2\R$,
\end{itemize}
that is, the universal covers of the
groups of orientation-preserving isometries of the $2$-dimensional
geometries.\qed
\end{thm}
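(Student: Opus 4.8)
The plan is to realise the three model geometries as explicit taut contact circles and then, in the converse direction, to invoke the classification of complex surfaces to cut down the possible diffeomorphism types. For the forward direction, I would first recall that condition (C4) together with the contact condition $\omega_j\wedge\rmd\omega_j\neq 0$ is precisely what defines a taut contact circle, and that the Maurer--Cartan $1$-forms on a Lie group provide a natural supply of left-invariant $1$-forms. So the first step is to write down, on each of the three groups $\mathrm{SU}(2)$, $\tilde{\mathrm{E}}_2$ and $\widetilde{\mathrm{SL}}_2$, a pair of left-invariant $1$-forms $(\omega_1,\omega_2)$ and check the taut contact circle identities by a direct structure-constant computation. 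Since the forms are left-invariant, they descend to any left-quotient $\Gamma\backslash G$, and the contact and tautness conditions are pointwise/algebraic, hence preserved under the quotient; this gives the ``if'' direction.

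For the converse, the key input is the result from \cite{gego95} already recalled in the excerpt: a taut contact circle $(\omega_1,\omega_2)$ on a closed oriented $3$-manifold $M$ induces a complex structure on $M\times S^1$. I would therefore pass to $X:=M\times S^1$, observe that it is a closed complex surface, and analyse which such surfaces can arise. The essential point is that the $S^1$-factor gives a nowhere-zero holomorphic vector field (or at least a distinguished free $S^1$-action compatible with the complex structure), so that $X$ carries extra structure forcing it into a restricted part of the Enriques--Kodaira classification. In particular one expects $X$ to have Kodaira dimension $-\infty$ or to be of a very special elliptic type, and the presence of the $S^1$-factor with $b_1(X)$ odd rules out the Kähler/algebraic cases and points towards Hopf-type or properly elliptic quotients, whose underlying smooth $3$-manifold $M$ is then forced to be a quotient of one of the three unimodular geometries $S^3$, $\widetilde{\mathrm{SL}}_2$ or the Euclidean geometry.

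The hard part will be this converse analysis: extracting from the complex surface $X=M\times S^1$ enough geometric information to pin down $M$ up to diffeomorphism, rather than merely up to homotopy or homeomorphism. Concretely, one must (a) identify the precise class of complex surfaces admitting a free holomorphic $S^1$-action with $3$-manifold quotient $M$, (b) match each such class against the Enriques--Kodaira list, and (c) read off the geometry of $M$ from the structure of $X$. The subtle technical obstacles are that the complex structure on $X$ need not be the obvious product one, and that one must exclude the nilpotent (Heisenberg) and solvable (Sol) geometries, which do not support taut contact circles; this exclusion is exactly what the complex-surface argument delivers, since the corresponding mapping-torus surfaces fail the tautness-induced constraints.

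Once the diffeomorphism type of $M$ is known to be a quotient of one of the three geometries, the final step is bookkeeping: identify $\mathrm{SU}(2)$ with the isometry group double cover in geometry~(i), $\tilde{\mathrm{E}}_2$ with the Euclidean case~(ii), and $\widetilde{\mathrm{SL}}_2\cong\widetilde{\mathrm{PSL}_2\R}$ with the hyperbolic case~(iii), thereby matching the list in the statement with the universal covers of the orientation-preserving isometry groups of the three $2$-dimensional geometries. Here I would be careful that ``left-quotient $\Gamma\backslash G$'' ranges over all discrete cocompact subgroups $\Gamma$, so that no spherical, Euclidean, or $\widetilde{\mathrm{SL}}_2$ Seifert-fibred $3$-manifold is inadvertently omitted, completing the equivalence.
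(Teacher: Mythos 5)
Your proposal follows essentially the same route as the paper, which states this theorem without proof and cites \cite{gego95}: there the forward direction is handled by explicit left-invariant models on the three Lie groups, and the converse by passing to the induced complex structure on $M\times S^1$ and working through the Enriques--Kodaira classification of the resulting surfaces. The strategy is the intended one; the steps you flag as ``the hard part'' are precisely the content of \cite{gego95}.
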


In \cite{gego02} we developed a deformation theory for taut
contact circles, and we determined the corresponding Teichm\"uller
and moduli spaces. Some topological aspects of these
moduli spaces were treated in~\cite{gego12}.
For a comprehensive survey on contact circles see~\cite{gego13}.

One of the aims of this paper will be to apply
results from the theory of transversely holomorphic flows,
which will be surveyed below,
in the special setting of taut contact circles. This will
include a classification of taut contact circles on $S^3$
in terms of the dynamics of its common kernel
foliation. A dynamical characterisation of the general
contact circle property was given in~\cite{gonz08}.
The present paper contains, amongst other things, all the results announced
in \cite{gego02} as to appear under the title
`Transversely conformal flows on $3$-manifolds'.

The class (ii) in Theorem~\ref{thm:taut-manifolds} contains only the five
torus bundles over $S^1$ with periodic monodromy. In class (iii),
the common kernel foliation is always given by the unique Seifert fibration
on the manifold in question. So from the viewpoint of transversely
holomorphic flows, only class (i) can be expected to give rise
to a rich theory. In the discussion of explicit models,
we shall restrict attention to transversely holomorphic foliations
on~$S^3$, but most of what we say extends in a natural way to
the left-quotients.

We end this section with two simple examples illustrating the
relation between transversely holomorphic flows and taut
contact circles, and the issue of the triviality of $TM/\langle Y\rangle$.
Observe that any Seifert fibration admits a transverse holomorphic
structure, given by lifting a holomorphic structure from the quotient
orbifold.

\begin{exs}
(1) The Seifert fibration given by a non-trivial circle bundle
over the $2$-torus defines a transversely holomorphic flow
with a trivial complementary plane bundle, so it can be described
by a formally integrable complex $1$-form~$\omegac$. However, the total
space is of geometric type $\mathrm{Nil}^3$ and does not appear
in the list of Theorem~\ref{thm:taut-manifolds}, so there is no choice
of $\omegac$ corresponding to a taut contact circle.

(2) The obvious Seifert fibration of $S^1\times S^2$ has a non-trivial
complementary plane bundle, so it defines a transversely holomorphic
flow that cannot be defined by a formally integrable complex $1$-form.
\end{exs}
\section{Godbillon--Vey theory and the Bott invariant}
Our aim in this section is to describe an invariant of transversely
holomorphic flows coming from formally integrable complex
$1$-forms. The construction is modelled on the classical
Godbillon--Vey invariant \cite{gove71} for codimension~$1$ foliations,
which we review briefly. This so-called Bott
invariant for transversely holomorphic
flows will then be used to prove a generalised Gau{\ss}--Bonnet
theorem for such flows.
\subsection{The classical Godbillon--Vey invariant}
\label{subsection:classical}
Let $N$ be a manifold of dimension at least~$3$, and $\omega$ a nowhere
zero $1$-form defining an integrable hyperplane field $\ker\omega$,
so that the integral manifolds of this hyperplane field constitute a smooth,
coorientable codimension~$1$ foliation. By the Frobenius integrability
theorem, this is equivalent to requiring $\omega\wedge\rmd\omega=0$.
Computing in a local coframe extending $\omega$, and then using a
partition of unity argument, one finds a $1$-form $\alpha$ on $N$
such that $\rmd\omega=\alpha\wedge\omega$. Then
\[ 0=\rmd^2\omega=\rmd\alpha\wedge\omega-\alpha\wedge\rmd\omega
=\rmd\alpha\wedge\omega-\alpha\wedge\alpha\wedge\omega=
\rmd\alpha\wedge\omega.\]
Arguing as before, we find a $1$-form $\beta$ such that $\rmd\alpha=
\beta\wedge\omega$. This implies
\[ \rmd(\alpha\wedge\rmd\alpha)=\rmd\alpha\wedge\rmd\alpha=
\beta\wedge\omega\wedge\beta\wedge\omega=0,\]
so the $3$-form $\alpha\wedge\rmd\alpha$ defines a de
Rham cohomology class
\[ [\alpha\wedge\rmd\alpha]\in H_{\mathrm{dR}}^3(N).\]
This class depends only on the foliation, not on the choice
of $\omega$ or $\alpha$; in particular, the
coorientation of the foliation
implicit in a choice of $\omega$ plays no role:

\begin{itemize}
\item[(i)]
Given any other $1$-form $\alpha'$ with $\rmd\omega=\alpha'\wedge\omega$,
we have $(\alpha'-\alpha)\wedge\omega=0$, hence $\alpha'-\alpha=
f\omega$ for some smooth function $f$ on~$N$. We then compute
\begin{eqnarray*}
\alpha'\wedge\rmd\alpha'
 & = & (\alpha+f\omega)\wedge (\rmd\alpha+\rmd f\wedge\omega+f\,\rmd\omega)\\
 & = & \alpha\wedge\rmd\alpha-\rmd f\wedge\rmd\omega\\
 & = & \alpha\wedge\rmd\alpha-\rmd(f\,\rmd\omega).
\end{eqnarray*}
\item[(ii)] If $\omega$ is replaced by $\tilde{\omega}=g\omega$
for some smooth nowhere zero function $g$ on~$N$, we compute
\begin{eqnarray*}
\rmd\tilde{\omega}
 & = & \rmd g\wedge\omega+g\,\rmd\omega\\
 & = & \rmd g\wedge\omega+g\alpha\wedge\omega\\
 & = & (g^{-1}\rmd g+\alpha)\wedge\tilde{\omega},
\end{eqnarray*}
so we may take $\tilde{\alpha}:=g^{-1}\rmd g+\alpha$. Then
\[ \tilde{\alpha}\wedge\rmd\tilde{\alpha}=
(g^{-1}\rmd g+\alpha)\wedge\rmd(g^{-1}\rmd g+\alpha)=\alpha\wedge\rmd\alpha-
\rmd(g^{-1}\,\rmd g\wedge\alpha).\]
\end{itemize}

For a nice survey on the Godbillon--Vey invariant and its history
see~\cite{ghys89}.
\subsection{Godbillon--Vey theory for transversely holomorphic flows}
We now mimic this construction for 
transversely holomorphic flows on a closed, connected,
oriented $3$-manifold $M$, with the plane bundle
complementary to the flow being trivial. Any such flow determines
a formally integrable complex $1$-form $\omegac$
(with pointwise linearly independent real and imaginary part),
unique up to multiplication with a nowhere zero, smooth complex-valued
function.

The formal integrability of $\omegac$ gives us a complex $1$-form
$\alphac$ such that
\[ \rmd\omegac=\alphac\wedge\omegac.\]
Computations analogous to (i) and (ii) above,
with $f$ and $g$ complex-valued, show that the cohomology class
$[\alphac\wedge\rmd\alphac]\in H^3_{\mathrm{dR}}(M)\otimes\C\cong\C$
is independent of choices. We interpret this class
as a complex number:

\begin{defn}
\label{defn:GV}
We call the complex number
\[ \int_M\alphac\wedge\rmd\alphac\]
the \emph{Bott invariant} of the transversely
holomorphic flow defined by the formally integrable $1$-form $\omegac$.
\end{defn}

\begin{rem}
In the monograph by Pittie~\cite{pitt76}, this invariant is called
the \emph{complex Godbillon--Vey class}, as one might have expected
from the construction we described. However, we follow
Asuke \cite[Definition~1.1.5]{asuk10} by naming it after Bott.
As explained on page~3 of Asuke's monograph, both for historical reasons
and in order to distinguish it from a different complex generalisation of
the Godbillon--Vey invariant, the attribution to Bott is the
preferred one.

This invariant makes one of its first appearances on pages 74--76
of Bott's lectures~\cite{bott72} on characteristic classes and foliations.
Its original construction (in greater generality) was based on Bott's
vanishing theorem for Pontrjagin classes of normal bundles to integrable
subbundles and Haefliger's theory of classifying spaces for foliations,
cf.~\cite{pitt76}. The simple construction in terms of complex-valued
differential forms was inspired by the work of Godbillon and Vey.

Bott's lectures also contain the computation of the invariant
for a certain family of transversely holomorphic foliations on~$S^3$,
see Proposition~\ref{prop:GV-Poincare} below.
\end{rem}

By the comment after Definition~\ref{defn:tcc},
the Bott number is in particular an invariant
of taut contact circles. Observe that if the formally integrable
complex $1$-form $\omegac$ stems from a taut contact circle, then
so does the $1$-form $\rho\rme^{\rmi\theta}\omegac$ for any smooth, nowhere
zero real-valued function $\rho$ on $M$, and any constant angle $\theta$.
The corresponding contact circles are precisely those related to each other
by pointwise scaling and global rotation;
these form what in \cite{gego95,gego02} we called
the \emph{homothety class} of a contact circle. The computation
in (ii) shows that the Bott number is an invariant
of the homothety class.
\subsection{A generalised Gau{\ss}--Bonnet theorem}
In this section we discuss an instance where the Bott invariant
depends only on the $1$-di\-men\-sio\-nal foliation defined by
the transversely holomorphic flow, but not on the specific
transverse holomorphic structure. We shall deduce the Gau{\ss}--Bonnet
theorem for surfaces from this result.

\begin{thm}
\label{thm:GV-real}
Let $\omegac$ be a formally integrable complex $1$-form on $M$
for which there exists a pure imaginary $1$-form $\rmi\alpha$ such that
\[ \rmd\omegac=\rmi\alpha\wedge\omegac.\]
Then any other formally integrable complex $1$-form defining the same
$1$-dimensional foliation has the same Bott
invariant.
\end{thm}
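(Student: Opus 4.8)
The plan is to parametrise all competitors and reduce to a normal form. Since $\omegac'$ and $\omegac$ define the same oriented $1$-dimensional foliation, the real and imaginary parts of $\omegac'$ span the same rank-$2$ annihilator subbundle of $T^*M$ as those of $\omegac$; hence $\omegac'=\phi\omegac+\psi\oomegac$ for smooth complex functions $\phi,\psi$ with $|\phi|\neq|\psi|$ everywhere (pointwise independence of $\omega_1',\omega_2'$). A direct check gives $\omega_1'\wedge\omega_2'=(|\phi|^2-|\psi|^2)\,\omega_1\wedge\omega_2$, so $\omegac'$ induces the given transverse orientation exactly when $|\phi|>|\psi|$, in which case $\phi$ is nowhere zero. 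The conjugate $\oomegac$ is again formally integrable, with $\rmd\oomegac=-\rmi\alpha\wedge\oomegac$, so it too satisfies the hypothesis and has Bott invariant $\overline{B(\omegac)}$, where I write $B$ for the Bott invariant; but $B(\omegac)=\int_M(\rmi\alpha)\wedge\rmd(\rmi\alpha)=-\int_M\alpha\wedge\rmd\alpha$ is real, so conjugation leaves it unchanged. It therefore suffices to treat the orientation-preserving case, and by the scaling invariance established in (ii) I may divide by $\phi$ and assume $\omegac'=\omegac+\mu\oomegac$ with $\mu=\psi/\phi$ and $|\mu|<1$.

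Next I would extract the constraint that formal integrability places on $\mu$. Using $\rmd\omegac=\rmi\alpha\wedge\omegac$ and $\rmd\oomegac=-\rmi\alpha\wedge\oomegac$ one computes $\rmd\omegac'=\rmi\alpha\wedge\omegac'+(\rmd\mu-2\rmi\mu\alpha)\wedge\oomegac$. As $M$ is $3$-dimensional, the complex $3$-form $\omegac'\wedge\rmd\omegac'$ is detected by its contraction with the foliation direction $Y$; carrying this out, and using $Y\ip\omegac=Y\ip\oomegac=0$, collapses the integrability equation $\omegac'\wedge\rmd\omegac'=0$ to the single scalar relation $L_Y\mu=2\rmi\,\alpha(Y)\,\mu$. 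This is exactly the condition for $\mu$ to encode a flow-invariant (holonomy-invariant) complex structure on the normal bundle.

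Finally I would exhibit a primitive $\alphac'$ and reduce the difference of invariants to an exact integrand. Writing $\alphac'=\rmi\alpha+\gamma$, the defining equation $\rmd\omegac'=\alphac'\wedge\omegac'$ becomes $\gamma\wedge\omegac'=(\rmd\mu-2\rmi\mu\alpha)\wedge\oomegac$, which is solvable for $\gamma$ because $\{\omegac',\oomegac\}$ spans the annihilator of $Y$ and the constraint above guarantees that the $Y$-contractions of the two sides match (the residual freedom in $\gamma$ is precisely the freedom of adding a multiple of $\omegac'$, matching invariance (i)). Expanding, $\alphac'\wedge\rmd\alphac'=-\alpha\wedge\rmd\alpha+\rmi\bigl(2\alpha\wedge\rmd\gamma+\rmd(\alpha\wedge\gamma)\bigr)+\gamma\wedge\rmd\gamma$, so after integration $B(\omegac')-B(\omegac)=\int_M\bigl(2\rmi\,\alpha\wedge\rmd\gamma+\gamma\wedge\rmd\gamma\bigr)$.

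The crux of the argument, and the step I expect to be the main obstacle, is to show that this correction form is exact, so that it dies under Stokes' theorem. Here the reality of $\alpha$ is essential: it is exactly the hypothesis that $\omegac$ is transversely Riemannian (the flow-invariant transverse metric $\omega_1\otimes\omega_1+\omega_2\otimes\omega_2$ has connection form $\alpha$) which forces the pieces that would otherwise contribute a varying imaginary part to the Bott invariant to assemble into $\rmd$ of a global $2$-form. I would carry out this cancellation by inserting the explicit $\gamma$ produced above and repeatedly invoking the constraint $L_Y\mu=2\rmi\,\alpha(Y)\,\mu$ together with the structure equation $\rmd\omegac=\rmi\alpha\wedge\omegac$ (which, on contracting with $Y$, controls $\rmd\alpha$ against $\omegac\wedge\oomegac$). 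Concluding $B(\omegac')=-\int_M\alpha\wedge\rmd\alpha=B(\omegac)$ then completes the proof.
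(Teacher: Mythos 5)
Your setup coincides with the paper's: the reduction to $\omegac'=\omegac+\mu\oomegac$ with $|\mu|<1$, the disposal of the orientation-reversing case via reality of the invariant, the formula $\rmd\omegac'=\rmi\alpha\wedge\omegac'+(\rmd\mu-2\rmi\mu\alpha)\wedge\oomegac$, and the integrability constraint are all correct. But the step you yourself flag as ``the main obstacle'' --- exactness of the correction form $2\rmi\,\alpha\wedge\rmd\gamma+\gamma\wedge\rmd\gamma$ --- is only announced, not proved; you never pin down $\gamma$, and the heuristic you offer (``repeatedly invoking the constraint $L_Y\mu=2\rmi\,\alpha(Y)\mu$ \dots forces the pieces to assemble into $\rmd$ of a global $2$-form'') is not an argument. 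That is a genuine gap, since the whole content of the theorem sits in exactly this step.

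The gap closes cleanly, and without any further use of the constraint on~$\mu$, once you make the right choice of~$\gamma$. The integrability constraint says $2\rmi\mu\alpha-\rmd\mu=a\omegac+b\oomegac$ for some functions $a,b$; take $\gamma=a\oomegac$, so $\alphac'=\rmi\alpha+a\oomegac$ (any other admissible $\gamma$ differs from this one by a multiple of $\omegac'$, which is harmless by your invariance~(i)). The point is then that
\[ \rmd\oomegac=-\rmi\alpha\wedge\oomegac=-\alphac'\wedge\oomegac, \]
i.e.\ $-\rmi\alpha$ and $-\alphac'$ are \emph{both} connection forms for the formally integrable $1$-form $\oomegac$, differing by the multiple $-a\oomegac$ of~$\oomegac$. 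The well-definedness computation (i) from the classical Godbillon--Vey construction, applied verbatim to $\oomegac$, therefore shows that $(\rmi\alpha)\wedge\rmd(\rmi\alpha)-\alphac'\wedge\rmd\alphac'$ is exact (explicitly, the difference is $\rmd(a\,\rmd\oomegac)$), and the theorem follows. Note that the role of the hypothesis that $\alpha$ is real is to make $\rmd\oomegac=-\rmi\alpha\wedge\oomegac$ hold, so that $\oomegac$ shares its connection form (up to sign) with $\omegac$; it is not needed for any cancellation downstream, contrary to what your sketch suggests.
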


\begin{rem}
\label{rem:Cartan}
The condition on the existence of the $1$-form $\rmi\alpha$ is equivalent
to
\[ \omega_1\wedge\rmd\omega_2=0=\omega_2\wedge\rmd\omega_1.\]
As a condition on $\omegac$ this can be written as
$\mathrm{Im}\,(\omegac\wedge \rmd\oomegac)=0$.
In the context of taut contact circles, this is what we called
a \emph{Cartan structure}, cf.\ \cite{gego95,gego02}.
\end{rem}

In general, the real and imaginary part of a formally integrable
complex $1$-form $\omegac$ define a transverse orientation
on the $1$-dimensional common kernel foliation. The complex
conjugate $\oomegac$ defines the opposite transverse orientation,
and the corresponding Bott invariants are complex
conjugates of each other.
In the situation of Theorem~\ref{thm:GV-real}, the
Bott invariant is a real number, so the choice of
coorientation is irrelevant.

\begin{proof}[Proof of Theorem~\ref{thm:GV-real}]
A simple pointwise calculation shows that, up to scaling by a
nowhere zero complex-valued function, any $1$-form defining the
same $1$-dimensional foliation and coorientation can be written as
\[ \omegac'=\omegac+\phi\oomegac\]
with some complex-valued function $\phi$ satisfying $|\phi|<1$. Then
\[ \rmd\omegac'=\rmi\alpha\wedge\omegac+(\rmd\phi-\rmi\phi\alpha)\wedge
\oomegac.\]
The requirement that $\omegac'$ be formally integrable gives
\begin{eqnarray*}
0 & = & \omegac'\wedge\rmd\omegac'\\ 
  & = & (\omegac+\phi\oomegac)\wedge(\rmi\alpha\wedge\omegac+
        (\rmd\phi-\rmi\phi\alpha)\wedge\oomegac)\\
  & = & (2\rmi \phi\alpha-\rmd\phi)\wedge\omegac\wedge\oomegac.
\end{eqnarray*}
This implies the existence of complex-valued functions $a,b$ such that
\[ 2\rmi\phi\alpha-\rmd\phi=a\omegac+b\oomegac.\]
Then $\rmd\omegac'$ can be rewritten as
\begin{eqnarray*}
\rmd\omegac' & = & \rmi\alpha\wedge (\omegac+\phi\oomegac)+
                   (\rmd\phi-2\rmi\phi\alpha)\wedge\oomegac\\
             & = & \rmi\alpha\wedge\omegac'-a\omegac\wedge\oomegac\\
             & = & (\rmi\alpha+a\oomegac)\wedge\omegac',
\end{eqnarray*}
which means that we may take
\[ \alphac'=\rmi\alpha+a\oomegac.\]
With this choice we have
\[ \rmd\oomegac=-\rmi\alpha\wedge\oomegac=-\alphac'\wedge\oomegac.\]
The argument in Section \ref{subsection:classical}~(i),
applied to the formally integrable $1$-form $\oomegac$, then shows
that the difference
\[ (\rmi\alpha)\wedge\rmd(\rmi\alpha)-\alphac'\wedge\rmd\alphac'\]
is exact.
\end{proof}

\begin{cor}[Gau{\ss}--Bonnet]
\label{cor:GB}
Let $\Sigma$ be a closed surface with a Riemannian metric of
Gau{\ss} curvature~$K$. The value of the integral $\int_\Sigma K\,\rmd A$
only depends on~$\Sigma$, not on the choice of metric.
\end{cor}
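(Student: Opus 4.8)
The plan is to realise $\int_\Sigma K\,\rmd A$, up to a universal nonzero constant, as the Bott invariant of a transversely holomorphic flow of exactly the kind covered by Theorem~\ref{thm:GV-real}, and then to read off the metric-independence directly from that theorem. The $3$-manifold carrying the flow will be the unit tangent bundle $\pi\co M\to\Sigma$, and the flow will be the Seifert fibration by its circle fibres.

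First I would set up the standard moving-frame description on $M$. A Riemannian metric on $\Sigma$ (assume $\Sigma$ oriented for the moment) determines tautological $1$-forms $\omega_1,\omega_2$ and a Levi-Civita connection $1$-form $\alpha$, all globally defined on $M$, satisfying the structure equations $\rmd\omega_1=-\alpha\wedge\omega_2$, $\rmd\omega_2=\alpha\wedge\omega_1$, and $\rmd\alpha=-\pi^*(K\,\rmd A)$. Setting $\omegac:=\omega_1+\rmi\omega_2$, a one-line computation gives
\[ \rmd\omegac=\rmi\alpha\wedge\omegac, \]
so $\omegac$ is formally integrable, has pointwise independent real and imaginary parts, and is precisely a Cartan structure in the sense of Remark~\ref{rem:Cartan}, with the pure imaginary form $\alphac=\rmi\alpha$. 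The associated $1$-dimensional foliation is the fibration of $M$ over $\Sigma$.

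Next I would compute the Bott invariant with this choice. Since $\alphac=\rmi\alpha$, we obtain $\alphac\wedge\rmd\alphac=-\alpha\wedge\rmd\alpha=\alpha\wedge\pi^*(K\,\rmd A)$, and integration over the fibres, on each of which $\alpha$ integrates to $2\pi$, yields
\[ \int_M\alphac\wedge\rmd\alphac=2\pi\int_\Sigma K\,\rmd A. \]
Thus $\int_\Sigma K\,\rmd A$ is, up to the factor $2\pi$, the Bott invariant of this flow.

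Finally I would invoke Theorem~\ref{thm:GV-real}. Two metrics $g_0,g_1$ produce complex $1$-forms $\omegac^{(0)},\omegac^{(1)}$ on their respective unit tangent bundles; identifying these with one fixed smooth circle bundle $M\to\Sigma$ by fibrewise radial rescaling (the isomorphism type depending only on $\chi(\Sigma)$), both forms live on the same $M$ and define the same $1$-dimensional foliation, namely the fibres. As $\omegac^{(0)}$ satisfies the hypothesis of Theorem~\ref{thm:GV-real}, that theorem forces the two Bott invariants to coincide, whence $\int_\Sigma K_{g_0}\,\rmd A_{g_0}=\int_\Sigma K_{g_1}\,\rmd A_{g_1}$. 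The step I expect to be the main obstacle is exactly this last one: verifying cleanly that distinct metrics give transverse holomorphic structures on \emph{one and the same} $3$-manifold with \emph{one and the same} underlying foliation, so that Theorem~\ref{thm:GV-real} genuinely applies. Running a path of metrics $g_t$ and using the resulting smooth family of unit tangent bundles, all diffeomorphic over $\Sigma$, settles this; the non-orientable case is handled either by passing to the orientation double cover or by using the $O(2)$-frame bundle, the density $K\,\rmd A$ being well defined irrespective of orientability.
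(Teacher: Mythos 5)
Your proposal is correct and follows essentially the same route as the paper: realise $\int_\Sigma K\,\rmd A$ (up to a universal nonzero constant) as the Bott invariant of the unit tangent bundle's circle fibration via the structure equations, observe that $\rmd\omegac=\rmi\alpha\wedge\omegac$ puts you in the Cartan-structure setting of Theorem~\ref{thm:GV-real}, and conclude metric-independence, with the non-orientable case handled by the double cover. The only difference is that you spell out the identification of the unit tangent bundles of different metrics as one fixed circle bundle, a point the paper passes over implicitly.
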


\begin{proof}
Let $\pi\co M\rightarrow\Sigma$ be the unit tangent bundle of~$\Sigma$.
Let us first assume that $\Sigma$ is orientable.
On $M$ we then have the standard Liouville--Cartan pair $\omega_1,\omega_2$,
cf.\ \cite[p.~149]{gego95}, \cite[Section~3]{gego02}, and
a connection $1$-form $\alpha$. These satisfy the structure equations
of a Cartan structure:
\begin{eqnarray*}
\rmd\omega_1 & = & \omega_2\wedge\alpha\\
\rmd\omega_2 & = & \alpha\wedge\omega_1\\
\rmd\alpha & = & (\pi^*K)\omega_1\wedge\omega_2.
\end{eqnarray*}
The complex $1$-form $\omegac:=\omega_1+\rmi\omega_2$ is then
formally integrable, with $\rmd\omegac=\rmi\alpha\wedge\omegac$.
When we change the metric or orientation on $\Sigma$, we can interpret this
as keeping the fibration $M\rightarrow\Sigma$, but
changing the transverse holomorphic structure on it.
By Theorem~\ref{thm:GV-real}, the total Gau{\ss} curvature
\[ \int_{\Sigma}K\,\rmd A=
\frac{1}{2\pi}\int_M(\pi^*K)\omega_1\wedge\omega_2\wedge\alpha=
\frac{1}{2\pi}\int_M\alpha\wedge\rmd\alpha\]
is, up to a factor $-1/2\pi$, the Bott invariant
determined solely by the fibration.

If $\Sigma$ is not orientable, we apply the preceding discussion
to an orientable double cover of~$\Sigma$.
\end{proof}
\section{Transversely holomorphic foliations on $S^3$}
\label{section:S3}
We now turn our attention to transversely holomorphic
foliations on the $3$-sphere~$S^3$. We shall introduce
two families of such foliations, and in Theorem~\ref{thm:all-P}
we show that this is a complete list. We also compute the
Bott invariant of these foliations.
\subsection{Poincar\'e foliations -- the parametric family}
In this section we study transversely holomorphic foliations
on $S^3$ induced from a formally integrable
complex $1$-form on $\C^2$ given by
\begin{equation}
\label{eqn:alpha-beta}
\omegac=\alpha z_1\,\rmd z_2-\beta z_2\,\rmd z_1
\end{equation}
for a pair $(\alpha,\beta)$ of complex numbers in the
so-called Poincar\'e domain.
A finite set of points in the complex plane is said to be in the
\emph{Poincar\'e domain}~\cite{arno88} if 
their convex hull does not contain the origin. For a pair
$(\alpha,\beta)$ this simply means that $\alpha,\beta\neq0$
and $\alpha/\beta\not\in\R^-$.

The reason for this restriction is provided by the following lemma,
which is implicit in~\cite{brun96}. In a wider context, this is
studied in~\cite{itsc05}.

\begin{lem}
\label{lem:P-domain}
The real and imaginary parts of $\omegac$ as in (\ref{eqn:alpha-beta})
induce pointwise linearly independent $1$-forms on $S^3\subset\C^2$,
and hence define a transversely holomorphic flow there,
if and only if $(\alpha,\beta)$ is in the Poincar\'e domain.
\end{lem}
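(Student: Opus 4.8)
The plan is to reduce the statement to a pointwise linear-algebra condition on $S^3$ and then to recognise that condition as precisely the Poincar\'e-domain hypothesis. First I would record that $\omegac$ is formally integrable on all of $\C^2$: since $\rmd\omegac=(\alpha+\beta)\,\rmd z_1\wedge\rmd z_2$, one has $\omegac\wedge\rmd\omegac=0$, and this vanishing persists under pullback to $S^3$. Thus, by condition (C3) and the equivalences of the first lemma, the only thing that needs checking is that the real and imaginary parts $\omega_1=\re\omegac$ and $\omega_2=\im\omegac$ restrict to pointwise linearly independent $1$-forms on $S^3$; the word ``hence'' in the statement is then automatic.

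Next I would fix $p=(z_1,z_2)\in S^3$ and translate linear independence into surjectivity. Viewing $\omegac$ as the $\C$-linear map $\C^2\to\C$, $v\mapsto\alpha z_1v_2-\beta z_2v_1$, the pair $(\omega_1,\omega_2)$ is linearly independent on $T_pS^3$ exactly when the restriction $\omegac|_{T_pS^3}\co T_pS^3\to\C\cong\R^2$ is surjective. Since $\dim_\R T_pS^3=3$, a dimension count shows that surjectivity fails precisely when the complex kernel line $\ell_p:=\ker\omegac=\C\cdot(\alpha z_1,\beta z_2)$ (a real $2$-plane) is contained in the real hyperplane $T_pS^3$: if $\ell_p\not\subset T_pS^3$ the kernel of the restriction is $1$-dimensional and $\omegac|_{T_pS^3}$ has rank $2$, whereas if $\ell_p\subset T_pS^3$ the rank drops to~$1$.

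The key computation is then to turn ``$\ell_p\subset T_pS^3$'' into a single scalar equation. Writing $T_pS^3=\{v:\re\langle v,p\rangle=0\}$ for the Hermitian product $\langle v,p\rangle=v_1\overline{z}_1+v_2\overline{z}_2$, and using that $\ell_p$ is a \emph{complex} line, the containment $\ell_p\subset T_pS^3$ is equivalent to $\re\bigl(\mu\langle u,p\rangle\bigr)=0$ for all $\mu\in\C$, where $u=(\alpha z_1,\beta z_2)$; this forces the full complex number $\langle u,p\rangle=\alpha|z_1|^2+\beta|z_2|^2$ to vanish. Hence linear independence fails at some point of $S^3$ if and only if $\alpha|z_1|^2+\beta|z_2|^2=0$ has a solution with $|z_1|^2+|z_2|^2=1$. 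Setting $t=|z_1|^2\in[0,1]$, such a solution exists exactly when $0$ lies on the segment $\{\alpha t+\beta(1-t):t\in[0,1]\}=\operatorname{conv}\{\alpha,\beta\}$, that is, when $(\alpha,\beta)$ is \emph{not} in the Poincar\'e domain; the degenerate endpoints correspond to $\alpha=0$ or $\beta=0$. This yields the claimed equivalence.

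The step I expect to be the crux is the passage from the real tangency condition to the single complex equation $\alpha|z_1|^2+\beta|z_2|^2=0$: it is exactly the complex (and not merely real) structure of the kernel line $\ell_p$ that upgrades ``$\re\langle u,p\rangle=0$'' to ``$\langle u,p\rangle=0$'', and this is what makes the Poincar\'e-domain condition drop out so cleanly. Everything else is a routine dimension count together with the convex-hull reformulation.
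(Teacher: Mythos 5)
Your proposal is correct and follows essentially the same route as the paper: both identify the common kernel at $p$ as the complex line spanned by $(\alpha z_1,\beta z_2)$ (the paper's vector field $X=\alpha z_1\partial_{z_1}+\beta z_2\partial_{z_2}$) and reduce everything to the non-vanishing of $\alpha|z_1|^2+\beta|z_2|^2$ on $S^3$, followed by the same convex-hull reformulation. Your surjectivity/dimension-count phrasing is just the dual of the paper's transversality statement for $X$, so there is no substantive difference.
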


\begin{proof}
Clearly both $\alpha$ and $\beta$ have to be non-zero,
otherwise the $1$-form $\omegac$ vanishes along
one of the Hopf circles $S^1\times\{0\}$ or
$\{0\}\times S^1\subset S^3\subset\C^2$.

Write $\omega_1,\omega_2$ for the real and imaginary part of
$\alpha z_1\,\rmd z_2-\beta z_2\,\rmd z_1$, respectively. The condition
for $\omega_1,\omega_2$ to induce pointwise linearly independent
$1$-forms on $S^3$ is that the plane field
$\DD:=\ker\omega_1\cap\ker\omega_2$
on $\C^2\setminus\{(0,0)\}$ be transverse to~$S^3$.

The plane field $\DD$ is in fact the complex line field spanned
by the holomorphic vector field $X:=\alpha z_1\partial_{z_1}+
\beta z_2\partial_{z_2}$. So we need to ensure that the real and
imaginary part of $X$ are not simultaneously tangent to~$S^3$.
This translates into
\[ 0\neq X(|z_1|^2+|z_2|^2)=\alpha|z_1|^2+\beta|z_2|^2\]
at all points $(z_1,z_2)\in S^3$, which is equivalent to $(\alpha,\beta)$
being in the Poincar\'e domain.
\end{proof}

By scaling $\omegac$ with a constant in $\C^*$, we may restrict
attention to Poincar\'e pairs of the form $(\alpha,\beta)=
(a,1-a)$ with $a\neq 0,1$ and $(1-a)/a\not\in\R^-$. This means
\[ a\in\PP:=(\C\setminus\R)\cup (0,1).\]

\begin{rem}
\label{rem:a-cc}
We claim that, as shown in \cite{gego95},
\[ \omega^a=\omega_1^a+\rmi\omega_2^a:=az_1\,\rmd z_2-(1-a)z_2\,\rmd z_1\]
defines a taut contact circle on $S^3$ if and only if
$0<\re(a)<1$, which describes a proper subset of~$\PP$.
Indeed, with $X:=az_1\partial_{z_1}+(1-a)z_2\partial_{z_2}$, and using
the fact that $\omega^a$ is formally integrable, one finds
\[ 2\omega_1^a\wedge\rmd\omega_1^a=\re(\oomega^a\wedge\rmd\omega^a)=
(X+\overline{X})\ip(\rmd\oz_1\wedge\rmd\oz_2\wedge\rmd z_1\wedge\rmd z_2).\]
So the contact circle condition is that $X+\overline{X}$ be transverse
to~$S^3$. From
\[ (X+\overline{X})(|z_1|^2+|z_2|^2)=2\re(a)|z_1|^2+2(1-\re(a))|z_2|^2\]
the claim follows.

Even for a general $a\in\PP$, the pair $(\omega_1^a,\omega_2^a)$
will satisfy the contact circle condition near at least one
of the Hopf circles, since $\re(a)$ and $1-\re(a)$ never vanish
simultaneously. This observation will be relevant in the
proof of Theorem~\ref{thm:unique-hol}.
\end{rem}

\begin{defn}
The $1$-dimensional foliations $\FF^a$ on $S^3$ defined
by the $\omega^a$ with $a\in\PP$ are
said to constitute the \emph{parametric family} of
\emph{Poincar\'e foliations}.
\end{defn}

We shall say more about this terminology in
Section~\ref{subsection:Poincare-discrete}. The symbol $\FF^a$ is meant
to denote an oriented and cooriented foliation: the coorientation is
the one defined by~$\omega^a$, the orientation is the one which
together with this coorientation gives the standard orientation of~$S^3$.
No specific transverse holomorphic structure is meant to be
implied by the symbol~$\FF^a$. One of our main objectives will be
to investigate to what extent the foliation $\FF^a$ alone determines
the transverse holomorphic structure or the homothety class
of the contact circle.

The map $(z_1,z_2)\mapsto (-z_2,z_1)$ defines an
orientation-preserving diffeomorphism of $S^3$ and pulls back
$\omega^a$ to $\omega^{1-a}$. So $(\FF^a,\omega^a)$ and
$(\FF^{1-a},\omega^{1-a})$ are diffeomorphic as transversely holomorphic
foliations. The set
\[ \MM:=\{ a\in\C\co 0<\re(a)<1\}/(a\sim 1-a) \]
constitutes the non-discrete part of the moduli space of taut contact
circles on~$S^3$, see \cite{gego95,gego02}.

The existence of a diffeomorphism between the transversely holomorphic
flows defined by $\omega^a$ and $\omega^{1-a}$ is reflected in the
following computation of their Bott invariant.

\begin{prop}
\label{prop:GV-Poincare}
For $a\in\PP$, the Bott invariant of $\omega^a$ equals
\[ \frac{-4\pi^2}{a(1-a)}.\]
\end{prop}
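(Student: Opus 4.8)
The plan is to compute the Bott invariant $\int_{S^3}\alphac\wedge\rmd\alphac$ directly from the $1$-form $\omega^a$ on $\C^2$, restricted to $S^3$. First I would produce an explicit $\alphac$ satisfying $\rmd\omega^a=\alphac\wedge\omega^a$. Since $\omega^a=az_1\,\rmd z_2-(1-a)z_2\,\rmd z_1$, I compute $\rmd\omega^a=a\,\rmd z_1\wedge\rmd z_2-(1-a)\,\rmd z_2\wedge\rmd z_1=\rmd z_1\wedge\rmd z_2$. To write this as $\alphac\wedge\omega^a$, I look for $\alphac=p\,\rmd z_1+q\,\rmd z_2$ with $(p\,\rmd z_1+q\,\rmd z_2)\wedge(az_1\,\rmd z_2-(1-a)z_2\,\rmd z_1)=\bigl(apz_1+(1-a)qz_2\bigr)\rmd z_1\wedge\rmd z_2$. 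I need this coefficient to equal $1$, and a natural symmetric choice is $\alphac=\frac{1}{2}\Bigl(\frac{\rmd z_1}{az_1}+\frac{\rmd z_2}{(1-a)z_2}\Bigr)$, valid away from the Hopf circles; I will need to check the resulting $\alphac\wedge\rmd\alphac$ is integrable over $S^3$, or else argue via a limiting/excision procedure around the two circles where $\alphac$ has poles.

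Next I would compute $\rmd\alphac$ and then $\alphac\wedge\rmd\alphac$. With $\alphac$ as above, $\rmd\alphac=0$ on $\C^2\setminus(\text{Hopf circles})$ since each summand is a closed logarithmic form; naively this would give a vanishing integrand, which signals that the Bott invariant is concentrated as a distributional/residue contribution at the two Hopf circles. This is the expected subtlety: the honest computation must keep track of the singularities. The cleaner route is to choose a globally smooth $\alphac$ on all of $S^3$. Because the foliation is transverse to $S^3$ and the complementary bundle is trivial, I can select $\alphac$ via the Lie-derivative description: from condition (C2), $L_Y\omegac=h\omegac$ with $\alphac$ related to $h$ and the contraction $Y\ip\rmd\omegac$. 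Concretely, I would use the Euler/radial and Hopf vector fields on $S^3$ to build a smooth global primitive-type $1$-form, expressing $\alphac$ in terms of $\rmd z_1,\rmd z_2,\rmd\oz_1,\rmd\oz_2$ and the real defining function $r^2=|z_1|^2+|z_2|^2$, then restrict to $r=1$.

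The main obstacle is organising the calculation so that the $\Im(\cdots)$ and antiholomorphic terms are handled correctly: the smooth global $\alphac$ will differ from the singular logarithmic one by a term involving $\rmd\log r^2$ and the transverse structure, and it is precisely the curvature-type term $\rmd\alphac$ carrying a nonzero $\rmd z_j\wedge\rmd\oz_k$ part that produces the answer. I would then reduce $\int_{S^3}\alphac\wedge\rmd\alphac$ to an integral of the form $\frac{c}{a(1-a)}\int_{S^3}(\text{standard volume-type }3\text{-form})$ using the Hopf fibration coordinates, where the prefactor $\frac{1}{a(1-a)}$ emerges from the two logarithmic residues $\frac{1}{a}$ and $\frac{1}{1-a}$ combining multiplicatively. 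Evaluating the remaining universal integral (essentially the volume of $S^3$ together with the $2\pi$ factors from the two Hopf circles) should give the normalising constant $-4\pi^2$. As a consistency check, I would verify the $a\leftrightarrow 1-a$ symmetry predicted by the diffeomorphism $(z_1,z_2)\mapsto(-z_2,z_1)$, which the formula $-4\pi^2/\bigl(a(1-a)\bigr)$ manifestly respects.
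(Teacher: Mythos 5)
Your overall strategy is the right one and is in fact the route the paper takes: discard the singular logarithmic primitive and instead use a globally smooth $\alphac$ built from $\oz_j\,\rmd z_j$ and the radial function $|z_1|^2+|z_2|^2$, then integrate over $S^3$. Your diagnosis of the first attempt is also correct -- with $\alphac=\frac{1}{2}\bigl(\frac{\rmd z_1}{az_1}+\frac{\rmd z_2}{(1-a)z_2}\bigr)$ the integrand vanishes identically off the Hopf circles, and the de Rham class $[\alphac\wedge\rmd\alphac]$ is simply not defined for a form with poles, so one must either do a delicate excision/residue argument or find a smooth choice.

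The gap is that the proposal stops exactly where the proof begins: you never produce the smooth $\alphac$, and everything after that point is conditional ("I would reduce\dots", "should give\dots"). The missing object is
\[ \alpha^a=\frac{1}{|z_1|^2+|z_2|^2}\Bigl(\frac{1}{a}\,\oz_1\,\rmd z_1+\frac{1}{1-a}\,\oz_2\,\rmd z_2\Bigr), \]
which is smooth on $\C^2\setminus\{(0,0)\}$ and satisfies $\rmd\omega^a=\alpha^a\wedge\omega^a$ (the coefficient of $\rmd z_1\wedge\rmd z_2$ in $\alpha^a\wedge\omega^a$ is $(|z_1|^2+|z_2|^2)/(|z_1|^2+|z_2|^2)=1$). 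With this in hand the computation is short: in $\alpha^a\wedge\rmd\alpha^a$ the term coming from $\rmd\bigl(|z|^{-2}\bigr)$ drops out because it is wedged against two copies of the same $1$-form, the diagonal terms $\rmd z_j\wedge\rmd\oz_j\wedge\rmd z_j$ vanish, and only the cross terms survive, giving $\frac{1}{a(1-a)}(\oz_1\,\rmd z_1\wedge\rmd\oz_2\wedge\rmd z_2+\oz_2\,\rmd z_2\wedge\rmd\oz_1\wedge\rmd z_1)$ on $TS^3$ -- this is where your heuristic about the two residues $\frac1a$ and $\frac{1}{1-a}$ "combining multiplicatively" is made precise. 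Rewriting this as $\frac{-2}{a(1-a)}$ times the contraction of the radial Euler field into the euclidean volume form and integrating yields $\frac{-2}{a(1-a)}\mathrm{vol}(S^3)=\frac{-4\pi^2}{a(1-a)}$. So the plan is sound and matches the paper's, but as written it is an outline rather than a proof; supplying the displayed $\alpha^a$ and the three-line wedge computation closes it.
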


\begin{proof}
On $\C^2\setminus\{(0,0)\}$ we have $\rmd\omega^a=\alpha^a\wedge\omega^a$
with
\[ \alpha^a:=\frac{1}{|z_1|^2+|z_2|^2}\Bigl(
\frac{1}{a}\oz_1\,\rmd z_1+\frac{1}{1-a}\oz_2\,\rmd z_2\Bigr).\]
On $TS^3$ we compute
\begin{eqnarray*}
\alpha^a\wedge\rmd\alpha^a
 & = & \frac{1}{a(1-a)}(\oz_1\,\rmd z_1\wedge\rmd\oz_2\wedge\rmd z_2+
                        \oz_2\,\rmd z_2\wedge\rmd\oz_1\wedge\rmd z_1)\\
 & = & \frac{-4}{a(1-a)}(\oz_1\partial_{\oz_1}+\oz_2\partial_{\oz_2})\ip
       (\rmd x_1\wedge\rmd y_1\wedge\rmd x_2\wedge\rmd y_2).
\end{eqnarray*}
The real part of $\oz_1\partial_{\oz_1}$ equals $(x_1\partial_{x_1}+y_1
\partial_{y_1})/2$; the imaginary part $(x_1\partial_{y_1}-
y_1\partial_{x_1})/2$ is tangent to $S^3$. It follows that
on $TS^3$ we have
\[ \alpha^a\wedge\rmd\alpha^a
  =  \frac{-2}{a(1-a)}\sum_{j=1}^2(x_j\partial_{x_j}+y_j\partial_{y_j})
       \ip (\rmd x_1\wedge\rmd y_1\wedge\rmd x_2\wedge\rmd y_2),\]
which integrates to
\[ \frac{-2}{a(1-a)}\mathrm{vol}\,(S^3)=\frac{-4\pi^2}{a(1-a)}.\qedhere\]
\end{proof}

\begin{rem}
In \cite{asuk10}, the $1$-form $\alphac$ in the construction of
the Bott invariant is defined via the equation $\rmd\omega= 2\pi\rmi\alphac
\wedge\omega$. With this normalising factor $2\pi\rmi$, the Bott
invariant of $\omega^a$ takes the value $1/a(1-a)$. The definition
without this factor, which is also the one in \cite[p.~8]{pitt76},
is notationally more convenient for the computations in
Section~\ref{section:thf-tcc}.
\end{rem}

The map
\[ \begin{array}{ccc}
\PP & \longrightarrow & \C\setminus\R_0^-\\
a   & \longmapsto     & a(1-a)
\end{array} \]
is a double branched covering, branched at the point $a=1/2$.
This can best be seen by writing $a=\frac{1}{2}+b$; then $a(1-a)=\frac{1}{4}
-b^2$. This map descends to a bijection
\[ \begin{array}{ccc}
\PP/(a\sim 1-a) & \longrightarrow & \C\setminus\R_0^-\\
\mbox{$[a]$}    & \longmapsto     & a(1-a).
\end{array} \]
Hence, with Proposition~\ref{prop:GV-Poincare} we deduce:

\begin{cor}
\label{cor:P-fol}
Up to orientation-preserving diffeomorphism, a Poincar\'e foliation
$\FF^a$ with the transverse holomorphic structure given by $\omega^a$ is
determined, within the class of all pairs $(\FF^a,\omega^a)$,
by its Bott invariant.\qed
\end{cor}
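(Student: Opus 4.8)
The plan is to read the result off directly from the Bott-invariant computation and the branched-cover structure already established, so the argument is essentially a matter of assembly. Since the Bott invariant is a de Rham cohomology class integrated over the \emph{oriented} $3$-manifold, it is unchanged under pullback by an orientation-preserving diffeomorphism; hence there is a well-defined map sending an orientation-preserving diffeomorphism class of pairs $(\FF^a,\omega^a)$ to its Bott number in $\C$. The content of the corollary is that this map is injective on the parametric family, and that is what I would prove.

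First I would invoke Proposition~\ref{prop:GV-Poincare}: the Bott invariant of $\omega^a$ equals $-4\pi^2/\bigl(a(1-a)\bigr)$. Since $a,1-a\neq 0$ for $a\in\PP$, two pairs $(\FF^a,\omega^a)$ and $(\FF^{a'},\omega^{a'})$ carry the same Bott invariant if and only if $a(1-a)=a'(1-a')$. I would then observe that, for a fixed value $c=a(1-a)$, the equation $a'(1-a')=c$ is the quadratic $a'^2-a'+c=0$, whose two roots sum to $1$ and are therefore exactly $a$ and $1-a$; this is precisely the fibre structure of the double branched covering $a\mapsto a(1-a)$ recorded just before the statement. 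Because $\PP$ is invariant under the involution $a\mapsto 1-a$, both roots lie in $\PP$, so equality of Bott invariants forces $a'\in\{a,1-a\}$.

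To finish I would split into the two cases. If $a'=a$, the two pairs literally coincide and there is nothing to prove. If $a'=1-a$, I would appeal to the orientation-preserving diffeomorphism $(z_1,z_2)\mapsto(-z_2,z_1)$ of $S^3$, which pulls $\omega^a$ back to $\omega^{1-a}$ and hence, as already noted above the statement, identifies $(\FF^a,\omega^a)$ with $(\FF^{1-a},\omega^{1-a})=(\FF^{a'},\omega^{a'})$ as transversely holomorphic foliations. In either case the pairs are diffeomorphic, which establishes injectivity and completes the argument.

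There is no genuine obstacle here: all the analytic work is already carried out in Proposition~\ref{prop:GV-Poincare}, and the remainder is the elementary algebra of the map $a\mapsto a(1-a)$ together with the explicit symmetry $(z_1,z_2)\mapsto(-z_2,z_1)$. The only points demanding a little care are orientation conventions --- one must stay within orientation-preserving diffeomorphisms, so that the Bott number itself, rather than its complex conjugate, is the relevant invariant --- and checking that this symmetry is indeed orientation-preserving on $S^3$ and restricts correctly from $\C^2$.
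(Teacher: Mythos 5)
Your proposal is correct and follows essentially the same route as the paper: the corollary is deduced directly from Proposition~\ref{prop:GV-Poincare}, the observation that $a\mapsto a(1-a)$ descends to a bijection on $\PP/(a\sim 1-a)$, and the orientation-preserving diffeomorphism $(z_1,z_2)\mapsto(-z_2,z_1)$ identifying $(\FF^a,\omega^a)$ with $(\FF^{1-a},\omega^{1-a})$. The paper treats all of this as the discussion immediately preceding the statement and marks the corollary with \qed; your write-up simply makes that assembly explicit.
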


This means that we may regard $\C\setminus\R_0^-$ as the
moduli space of Poincar\'e foliations $(\FF^a,\omega^a)$
in the parametric family.
In particular, the image of $\MM$ under the map $[a]\mapsto a(1-a)$,
which is the convex open set
$\{x+\rmi y\in\C\co x>y^2\}$, can be thought of as (one component of)
the moduli space of taut contact circles on~$S^3$, see~\cite{gego02}.

\begin{rem}
\label{rem:alpha-a}
The $\alpha^a$ used in the proof of Proposition~\ref{prop:GV-Poincare}
is the most convenient one for computing the Bott invariant.
However, it may be replaced by
\[ \frac{1}{a|z_1|^2+(1-a)|z_2|^2}(\oz_1\,\rmd z_1+\oz_2\,\rmd z_2).\]
For $a\in(0,1)$, that is, for $a$ in the real part of~$\PP$
(or~$\MM$), the restriction of this $1$-form to $TS^3$ is pure
imaginary, since
\[ \oz_1\,\rmd z_1+\oz_2\,\rmd z_2+z_1\,\rmd\oz_1+z_2\,\rmd\oz_2=
2(x_1\,\rmd x_1+y_1\,\rmd y_1+x_2\,\rmd x_2+y_2\,\rmd y_2).\]
So for these $\omega^a$ Theorem~\ref{thm:GV-real} applies.
Alternatively, one may check that
\[ \mathrm{Im}\,(\omega^a\wedge\rmd
\overline{\omega^a})=0\;\;\text{for}\;\; a\in (0,1).\]
\end{rem}
\subsection{Poincar\'e foliations -- the discrete family}
\label{subsection:Poincare-discrete}
In \cite{gego95} it was shown that the moduli space of 
homothety classes of taut contact circles on $S^3$
is given by the disjoint union of $\MM$ and the countable family
defined by
\[ \omega_n:=nz_1\,\rmd z_2-z_2\,\rmd z_1+z_2^n\,\rmd z_2,\;\;\;
n\in\N:=\{1,2,3,\ldots\}.\]
Write $\FF_n$ for the oriented and cooriented
$1$-dimensional foliation on $S^3$ defined
by~$\omega_n$.

\begin{defn}
We say the $\FF_n$, $n\in\N$, make up the \emph{discrete family}
of \emph{Poincar\'e foliations} on~$S^3$.
\end{defn}

A larger part of the following theorem is due to
Brunella~\cite{brun96} and Ghys~\cite{ghys96}, but they do not describe
the explicit models. A list of these models is also contained
in~\cite[Theorem~2.1]{itsc05}.

\begin{thm}
\label{thm:all-P}
The $\FF^a$, $a\in\PP$, and the $\FF_n$, $n\in\N$, exhaust all
foliations on $S^3$ admitting a transverse holomorphic structure.
\end{thm}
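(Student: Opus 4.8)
The plan is to reduce an arbitrary transversely holomorphic foliation $\FF$ on $S^3$ to the germ of a singular holomorphic foliation at the origin of $\C^2$, and then to apply the Poincar\'e--Dulac normal form to match it with one of the explicit models.

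First I would set up the reduction. Since $\FF$ has trivial normal bundle it is defined by a formally integrable complex $1$-form $\omegac=\omega_1+\rmi\omega_2$; following Brunella and Ghys, $\FF$ is the trace on $S^3\subset\C^2$ of the singular holomorphic foliation cut out by a holomorphic vector field $X$ on a neighbourhood of the closed unit ball $\overline{B^4}\subset\C^2$, with an isolated zero at the origin. Exactly as in the proof of Lemma~\ref{lem:P-domain}, the complex line field $\langle X\rangle$ restricts to $\ker\omega_1\cap\ker\omega_2$, and transversality of $X$ to the spheres $S^3_r$ near the origin forces the eigenvalues $(\alpha,\beta)$ of the linear part $DX(0)$ to lie in the Poincar\'e domain: were $0$ in the convex hull of $\{\alpha,\beta\}$, the leading term $\re(\alpha)|z_1|^2+\re(\beta)|z_2|^2$ of $\re\bigl(X(|z_1|^2+|z_2|^2)\bigr)$ would change sign, producing a tangency. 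Since a Poincar\'e pair never satisfies $\alpha+\beta=0$, a constant scaling of $X$ lets me assume the linear part has eigenvalues $(a,1-a)$ with $a\in\PP$.

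Second, I would invoke Poincar\'e--Dulac. Because $(\alpha,\beta)$ lies in the Poincar\'e domain there are only finitely many resonances, and $X$ is holomorphically conjugate near the origin to a polynomial vector field whose nonlinear part consists solely of resonant monomials. A short computation with the resonance relation $\lambda_i=m_1\lambda_1+m_2\lambda_2$ (with $m_j\geq 0$ and $m_1+m_2\geq 2$) shows that, in the Poincar\'e domain, a resonance in the $\partial_{z_1}$-slot occurs precisely when the ratio $\lambda_1/\lambda_2$ is an integer $n\geq 2$, contributing the monomial $z_2^n\partial_{z_1}$, and symmetrically in the $\partial_{z_2}$-slot; by the involution $a\sim 1-a$, realised by the coordinate swap $(z_1,z_2)\mapsto(-z_2,z_1)$, these two possibilities are interchanged. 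The case analysis then splits into linearisable and genuinely resonant normal forms. If the linear part is diagonalisable and no resonant monomial survives, then $X\simeq az_1\partial_{z_1}+(1-a)z_2\partial_{z_2}$, whose annihilating $1$-form is, up to sign and scale, $az_1\,\rmd z_2-(1-a)z_2\,\rmd z_1=\omega^a$, yielding the parametric family $\FF^a$. Otherwise the normal form carries the single off-diagonal or resonant term and, after absorbing its coefficient into a scaling, becomes $X\simeq(nz_1+z_2^n)\partial_{z_1}+z_2\partial_{z_2}$ with $n\in\N$ (the case $n=1$ being the non-diagonalisable Jordan block with equal eigenvalues); its annihilating $1$-form is $nz_1\,\rmd z_2-z_2\,\rmd z_1+z_2^n\,\rmd z_2=\omega_n$, yielding the discrete family $\FF_n$. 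As a holomorphic conjugacy of vector fields induces a diffeomorphism of $S^3$ carrying one transversely holomorphic foliation to the other, this exhausts all possibilities.

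The main obstacle is the reduction step: showing that an \emph{a priori} merely smooth transversely holomorphic foliation on $S^3$ genuinely extends to a \emph{holomorphic} singular foliation of a neighbourhood of the origin in $\C^2$. This is the deep geometric input, drawn from Brunella and Ghys, ultimately resting on a complexification of the flow and the Enriques--Kodaira classification of the associated complex surface, and it is far more substantial than the normal-form bookkeeping that follows. Once the holomorphic vector field is available, the Poincar\'e domain condition and the finiteness of resonances reduce the classification to the finite computation above; the only care required is to track the constant scalings and the involution $a\mapsto 1-a$ when identifying the two normal forms with the listed models $\omega^a$ and $\omega_n$.
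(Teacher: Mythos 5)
Your proposal is correct and follows essentially the same route as the paper: reduce via Brunella--Ghys to a holomorphic vector field with a Poincar\'e singularity at the origin, apply the Poincar\'e--Dulac normal form, and carry out the resonance computation to arrive at $\omega^a$ (diagonalisable non-resonant case), $\omega_1$ (Jordan block), and $\omega_n$, $n\geq 2$ (resonant case). The only cosmetic difference is that you spell out the transversality argument for the Poincar\'e domain condition, which the paper isolates in Lemma~\ref{lem:P-domain}.
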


\begin{proof}
According to \cite{brun96,ghys96}, any foliation on $S^3$
admitting a transverse holomorphic structure is a
\emph{Poincar\'e foliation}, i.e.\ it is a foliation --- on
a small sphere around the origin $(0,0)\in\C^2$ ---
induced by a holomorphic vector field with a singularity at $(0,0)$
whose linearisation at the origin has a pair of eigenvalues in the
Poincar\'e domain.

According to the Poincar\'e--Dulac theorem \cite[p.~190]{arno88},
\cite{cmv05}, such a singularity is biholomorphic to
a polynomial normal form, where the non-linear terms
come from resonances. For a singularity in $\C^k$ this means the following.
Write $\blambda=(\lambda_1,\ldots,\lambda_k)$ for the eigenvalues
of the linearisation. A resonance is a multi-index $\bm=(m_1,\ldots, m_k)
\in\N_0^k$
of non-negative integers with $m_1+\cdots+m_k\geq 2$, for which there
is a $j\in\{1,\ldots,k\}$ such that
\[ \langle\blambda,\bm\rangle-\lambda_j=0.\]
Any such resonance then gives rise to a monomial term
$c_{\bm,j}z_1^{m_1}\cdots z_k^{m_k}\,\partial_{z_j}$ in the
polynomial normal form.

In complex dimension two, by rescaling we may assume that
$\lambda_1=a$ and $\lambda_2=1-a$, with $a\in\PP$. The resonance
condition for $\lambda_1$ then becomes
\[ am_1+(1-a)m_2=a.\]
With $m_1,m_2\in\N_0$ this implies $a\in\PP\cap\R=(0,1)$, and further
$m_1=0$ and $m_2=a/(1-a)$. So the resonance condition
is $n:=a/(1-a)\in\{2,3,\ldots\}$.
The resonance condition for $\lambda_2$ leads to
$(1-a)/a\in\{2,3,\ldots\}$, which we can ignore by symmetry.

So the only resonant term is
\[ z_2^n\partial_{z_1}\;\;\text{for}\;\;\frac{a}{1-a}=n\in\{2,3,\ldots\}.\]
This condition on $a$ rules out the case of a double eigenvalue $a=1/2$
in the linearised singularity, so the corresponding normal form is
\[ \frac{n}{1+n} (z_1+cz_2^n)\,\rmd z_2-\frac{1}{1+n}z_2\,\rmd z_1.\]
By rescaling and pull-back under the map $(z_1,z_2)\mapsto (cnz_1,z_2)$
for $c\neq 0$, we obtain the $\omega_n$, $n\geq 2$, introduced above.

In the non-resonant case, we obtain $\omega^a$, $a\in\PP$, if the
linearisation is diagonalisable, and $\omega_1$ if it is not.
\end{proof}

Our computation of the Bott invariant of the $\omega_n$
depends crucially on the moduli theory of taut contact circles.

\begin{prop}
The Bott invariant of $\omega_n$ equals
\[ -4\pi^2\,\frac{(n+1)^2}{n}.\]
\end{prop}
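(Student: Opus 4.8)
The plan is to avoid computing $\int_{S^3}\alphac\wedge\rmd\alphac$ for $\omega_n$ directly, and instead to deform $\omega_n$ to the linear Poincar\'e form and exploit both the continuity and the diffeomorphism-invariance of the Bott invariant. To this end I introduce the interpolating family
\[ \omega_{n,t}:=nz_1\,\rmd z_2-z_2\,\rmd z_1+t\, z_2^n\,\rmd z_2,\qquad t\in[0,1], \]
with $\omega_{n,0}=nz_1\,\rmd z_2-z_2\,\rmd z_1$ and $\omega_{n,1}=\omega_n$. A short computation gives $\rmd\omega_{n,t}=(n+1)\,\rmd z_1\wedge\rmd z_2$ for every~$t$. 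Since the linear part has eigenvalues $n$ and $1$, which lie in the Poincar\'e domain independently of~$t$, Lemma~\ref{lem:P-domain} shows that each $\omega_{n,t}$ induces a transversely holomorphic flow on~$S^3$, and one may solve $\rmd\omega_{n,t}=\alphac_t\wedge\omega_{n,t}$ for a complex $1$-form $\alphac_t$ whose coefficients depend smoothly on~$t$. Hence the Bott invariant $B(t):=\int_{S^3}\alphac_t\wedge\rmd\alphac_t$ is a continuous function of~$t$.

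First I would evaluate the endpoint $t=0$. Writing $\omega_{n,0}=(n+1)\,\omega^{n/(n+1)}$ exhibits it as a constant multiple of a member of the parametric family, so it lies in the same homothety class as $\omega^{n/(n+1)}$; since the Bott invariant is a homothety invariant (Section~\ref{subsection:classical}(ii)), Proposition~\ref{prop:GV-Poincare} with $a=n/(n+1)$ yields
\[ B(0)=\frac{-4\pi^2}{a(1-a)}=-4\pi^2\,\frac{(n+1)^2}{n}. \]

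The heart of the argument is to show that $B(t)$ is constant on $(0,1]$, and here the moduli theory of taut contact circles enters decisively. The diagonal biholomorphism $\psi(z_1,z_2)=(\lambda z_1,\mu z_2)$ conjugates the dual holomorphic vector field $X_t=(nz_1+t z_2^n)\partial_{z_1}+z_2\partial_{z_2}$ to $X_{t\lambda/\mu^n}$, so for any $s,t\neq 0$ the normal forms $X_t$ and $X_s$ are holomorphically conjugate. By the Poincar\'e--Dulac normalisation used in the proof of Theorem~\ref{thm:all-P} together with the Brunella--Ghys classification~\cite{brun96,ghys96}, conjugate germs induce the same transversely holomorphic foliation on~$S^3$ up to orientation-preserving diffeomorphism; equivalently, all the $\omega_{n,t}$ with $t\neq 0$ represent the single isolated point $\omega_n$ in the moduli space of homothety classes of taut contact circles on~$S^3$ determined in~\cite{gego95,gego02}. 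As the Bott invariant is invariant under orientation-preserving diffeomorphism and under homothety, $B(t)$ is constant for $t\neq 0$. Combining this with the continuity of $B$ and the value $B(0)$ forces $B(1)=B(0)=-4\pi^2(n+1)^2/n$, as claimed.

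The main obstacle is exactly this middle step: passing from the holomorphic conjugacy of the germs $X_t$ at the origin to an honest orientation-preserving diffeomorphism of $S^3$ carrying $\FF_n^t$ to $\FF_n^s$. The naive radial projection $z\mapsto\psi(z)/|\psi(z)|$ does \emph{not} preserve leaves, because the complex orbits of $X_s$ fail to be invariant under radial scaling; this is precisely the point where the classification of Brunella and Ghys, or the explicit moduli-space description of~\cite{gego95,gego02}, must be invoked to guarantee that conjugate Poincar\'e singularities yield diffeomorphic foliations on the sphere.
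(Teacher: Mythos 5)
Your proposal is correct and follows essentially the same route as the paper: the same interpolating family $\omega_n^{\lambda}=nz_1\,\rmd z_2-z_2\,\rmd z_1+\lambda z_2^n\,\rmd z_2$, constancy of the Bott invariant on $(0,1]$ because all these forms represent the same isolated point of the moduli space of taut contact circles (the ``jump'' homotopy of \cite[\S 6]{gego95}), continuity at $\lambda=0$, and evaluation there via Proposition~\ref{prop:GV-Poincare} with $a=n/(n+1)$. The only difference is that you justify the constancy on $(0,1]$ by exhibiting the explicit diagonal conjugacy of the dual vector fields and appealing to the Brunella--Ghys classification, where the paper simply cites the earlier moduli-theoretic discussion.
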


\begin{proof}
In \cite[\S 6]{gego95} we discussed the following `jump'
homotopy, which mirrors a phenomenon in the moduli theory
of Hopf surfaces discovered by Kodaira and Spencer~\cite{kosp58}.
For given $n\in\N$, consider the family
\[ \omega_n^{\lambda}:=nz_1\,\rmd z_2-z_2\,\rmd z_1+\lambda
z_2^n\,\rmd z_2,\;\;\; \lambda\in [0,1].\]
For $\lambda\in(0,1]$ these complex $1$-forms all define the same taut contact
circle, up to homothety and diffeomorphism. For $\lambda=0$
we obtain the taut contact circle homothetic to
\[ \omega^{n/(n+1)}=\frac{n}{n+1}\, z_1\,\rmd z_2-\frac{1}{n+1}\, z_2\,
\rmd z_1\]
from the parametric family.

Although the equivalence class of the taut contact circle jumps at
$\lambda=0$, the Bott invariant will depend continuously
on $\lambda$ for all $\lambda\in [0,1]$ and hence,
being constant on $(0,1]$, will
be identically equal to that of $\omega^{n/(n+1)}$.
\end{proof}
\section{Logarithmic monodromy}
\label{section:log}
In order to describe the geometry of a transversely holomorphic
foliation, we study the logarithmic monodromy
along a closed leaf. It is best to explain the concept
in a concrete case.

Thus, consider a Poincar\'e foliation $\FF^a$, with
transversely holomorphic structure given by $\omega^a$,
and with the corresponding orientation of the leaves.
For any $a\in\PP$, the two
Hopf circles $S^1\times\{0\}$ and $\{0\}\times S^1$ constitute
closed leaves of~$\FF^a$.

Either of these Hopf circles, just like any other knot in~$S^3$, comes with
a preferred trivialisation (up to homotopy) of its normal bundle, namely, the
surface framing defined by a Seifert surface of the knot.
The transverse holomorphic structure $J$ then determines an oriented
conformal framing: take any vector field $Z$ along the knot
which is tangent to the Seifert surface, and declare that the
rotate of $Z$ through an angle $\pi/2$ be equal to $JZ$.
For the Hopf circle $S^1\times\{0\}$, such a Seifert surface
is given by the disc
\[ \{ (r\rme^{\rmi\theta},\sqrt{1-r^2})\co r\in [0,1],\; \theta\in\R\}
\subset S^3.\]
This corresponds to the oriented conformal framing given by the oriented
basis $(\partial_{x_2},\partial_{y_2})$ of tangent vector
fields along the Hopf circle, or by the type $(1,0)$ complex
tangent vector~$\partial_{z_2}$.

Such a framing allows us to identify a neighbourhood of an
oriented closed leaf $\gamma$
with a neighbourhood of $S^1\times\{0\}$ in $S^1\times\C$.
Fix the transversal $\{1\}\times\C$ to $S^1\times\{0\}$.
The oriented foliation then determines a family of germs
of holomorphic maps $\varphi_t\co(\C,0)\rightarrow (\C,0)$
by writing the intersection point of the leaf through
$(1,z)$ with the transversal $\{\rme^{\rmi t}\}\times\C$ as
$(\rme^{\rmi t},\varphi_t(z))$.

We can then make a continuous choice of logarithm $\log\varphi_t'(0)$
with $\log\varphi_0'(0)=\log 1 =0$.
A different identification of $\gamma$ with $S^1$ and a homotopy
of the framing will change
the map $\varphi_t$ by conjugation and homotopy rel~$\{0,2\pi\}$,
so the following quantity associated with a closed
leaf is independent of choices.

\begin{defn}
\label{defn:log-m}
The \emph{logarithmic monodromy} of the closed leaf $\gamma$ is
$\log\varphi_{2\pi}'(0)$.
\end{defn}

\begin{rem}
Our notion of logarithmic monodromy may be interpreted as a simple
instance of the residue theory developed by Heitsch~\cite{heit78},
see also \cite[Chapter~5]{asuk10} and \cite[Example~6.1]{asuk04},
for instance.
\end{rem}

Notice that although we need
a transverse holomorphic structure to define the logarithmic monodromy
of a closed leaf,
the value of this monodromy is completely determined by
the oriented and cooriented foliation:

\begin{lem}
The logarithmic monodromy is independent of the choice of
transverse holomorphic structure inducing a given
transverse orientation.
\end{lem}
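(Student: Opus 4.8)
The plan is to show that the logarithmic monodromy is insensitive to the choice of transverse holomorphic structure by exhibiting how two such structures (inducing the same transverse orientation) are related, and then tracking the effect of that relation on the germs $\varphi_t$ and ultimately on $\varphi_{2\pi}'(0)$.

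First I would recall, as in the proof of Theorem~\ref{thm:GV-real}, that any two formally integrable complex $1$-forms $\omegac$ and $\omegac'$ defining the same oriented and cooriented foliation differ (up to scaling by a nowhere zero complex-valued function, which does not change the underlying complex structure on $TM/\langle Y\rangle$) by
\[ \omegac'=\omegac+\phi\oomegac \]
for a complex-valued function $\phi$ with $|\phi|<1$. Geometrically, the associated complex structures $J$ and $J'$ on the normal bundle agree as oriented conformal structures up to a Beltrami-type deformation governed by $\phi$; the constraint $|\phi|<1$ is exactly what guarantees that $J'$ induces the same transverse orientation. The key point is that a holonomy germ $\varphi_t$ is really a germ of the holonomy pseudogroup of the underlying foliation, described in a chart adapted to the transverse holomorphic structure. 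Changing the structure amounts to post- and pre-composing the transversal identifications by germs of orientation-preserving diffeomorphisms $\psi,\psi'\co(\C,0)\to(\C,0)$ that conjugate the $J$-holomorphic transversal coordinate to the $J'$-holomorphic one.

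The central computation I would then carry out is the chain rule at the origin. Writing the $J'$-holonomy germ as $\varphi_t'=\psi_t\circ\varphi_t\circ\psi_0^{-1}$ (with $\psi_0,\psi_{2\pi}$ the transversal-change germs at the two ends of the loop, and $\psi_{2\pi}=\psi_0$ since the framing is periodic around $\gamma$), differentiation at $0$ gives
\[ (\varphi_{2\pi}')'(0)=\psi_{2\pi}'(0)\,\varphi_{2\pi}'(0)\,(\psi_0^{-1})'(0)
 =\varphi_{2\pi}'(0), \]
because $\psi_{2\pi}'(0)$ and $(\psi_0^{-1})'(0)$ cancel. Hence $\varphi_{2\pi}'(0)$ is unchanged, and since the continuous branch of the logarithm with $\log\varphi_0'(0)=0$ is determined by its endpoint value, so is $\log\varphi_{2\pi}'(0)$. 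The invariance under conjugation and homotopy rel $\{0,2\pi\}$ noted before Definition~\ref{defn:log-m} then handles any remaining ambiguity in the identification of $\gamma$ with $S^1$.

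The main obstacle I anticipate is justifying rigorously that the two transverse holomorphic structures really are related by a \emph{single-valued} germ-of-diffeomorphism change of transversal coordinate that is periodic around the leaf — i.e.\ that $\psi_{2\pi}=\psi_0$ at the level of derivatives at the origin. This is where the constraint $|\phi|<1$ and the preservation of transverse orientation do the work: they ensure $\phi$ deforms the conformal structure without introducing any extra winding, so the linear parts $\psi_t'(0)$ vary continuously in $t$ and return to their starting value. Once this is established, the cancellation in the chain rule is purely formal, and the continuity of the chosen logarithm branch finishes the argument.
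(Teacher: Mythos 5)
Your setup is fine --- reducing to $\omegac'=\omegac+\phi\oomegac$ with $|\phi|<1$ and expressing the new holonomy germs as $\varphi_t'=\psi_t\circ\varphi_t\circ\psi_0^{-1}$ with $\psi_{2\pi}=\psi_0$ --- but the final inference contains a genuine gap. You conclude from $(\varphi_{2\pi}')'(0)=\varphi_{2\pi}'(0)$ that the logarithmic monodromy is unchanged because ``the continuous branch of the logarithm \dots is determined by its endpoint value.'' That is false: the continuously chosen $\log\varphi_{2\pi}'(0)$ depends on the homotopy class rel endpoints of the whole path $t\mapsto\varphi_t'(0)$ in $\C^*$, not just on its endpoint. (This is exactly why the invariant is interesting: for the Hopf circle in $\FF^a$ it equals $2\pi\rmi(1-a)/a$, and distinct values of $a$ can give the same endpoint $\varphi_{2\pi}'(0)$ while the logarithmic monodromies differ by multiples of $2\pi\rmi$.) Concretely, the change of structure multiplies the path $t\mapsto\varphi_t'(0)$ by the loop $t\mapsto\psi_t'(0)/\psi_0'(0)$ in $\C^*$, and you must show this loop has winding number zero. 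Your appeal to $|\phi|<1$ and ``no extra winding'' asserts precisely what needs to be proved; a loop that ``returns to its starting value'' can still wind.

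The missing ingredient is the paper's key observation: writing $\rmd\omegac=\alphac\wedge\omegac$, the formal integrability of $\omegac+\phi\oomegac$ is the condition $(\phi\alphac-\phi\oalphac-\rmd\phi)\wedge\omegac\wedge\oomegac=0$, which is \emph{linear} in $\phi$. Hence $\omegac+\lambda\phi\oomegac$, $\lambda\in[0,1]$, is a path of transverse holomorphic structures joining the two given ones, so the family $\varphi_t$ changes by conjugation and a \emph{homotopy} rel $\{0,2\pi\}$. Since the winding contribution is an integer varying continuously in $\lambda$ and vanishes at $\lambda=0$, it vanishes identically, and the continuously chosen logarithm is preserved. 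Without this (or some substitute argument controlling the homotopy class of the path of derivatives), your proof establishes only that $\varphi_{2\pi}'(0)$, i.e.\ the logarithmic monodromy modulo $2\pi\rmi\Z$, is independent of the transverse holomorphic structure.
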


\begin{proof}
Let one transverse holomorphic structure be given by the
formally integrable $1$-form $\omegac$. Then, as in the proof
of Theorem~\ref{thm:GV-real}, we observe that any other
$1$-form defining the same cooriented foliation can be scaled to
\[ \omegac'=\omegac+\phi\oomegac\]
with $|\phi|<1$. If we choose an $\alphac$ such that
$\rmd\omegac=\alphac\wedge\omegac$, the condition for $\omegac'$ to be
formally integrable becomes
\[ (\phi\alphac-\phi\oalphac-\rmd\phi)\wedge\omegac\wedge\oomegac=0.\]
This condition is linear in~$\phi$, so it
follows that $\omegac+\lambda\phi\oomegac$, $\lambda\in [0,1]$,
defines a homotopy of transverse holomorphic structures.

Thus, changing the transverse holomorphic structure once again amounts
to changing the map $\varphi_t$ by conjugation and homotopy rel~$\{0,2\pi\}$.
\end{proof}

If we change the orientation of the foliation, the logarithmic
monodromy changes its sign; changing the coorientation amounts to
taking the complex conjugate of the logarithmic monodromy.

\begin{prop}
\label{prop:log-a}
For $a\in\PP$, the logarithmic monodromy of
$S^1\times\{0\}$ in $\FF^a$ is $2\pi\rmi (1-a)/a$,
that of $\{0\}\times S^1$ is $2\pi\rmi a/(1-a)$.
\end{prop}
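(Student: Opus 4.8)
The plan is to compute the logarithmic monodromy directly from the explicit flow of the foliation $\FF^a$ near the Hopf circle $S^1\times\{0\}$, exploiting the fact that the leaves are the integral curves of the holomorphic vector field $X=az_1\partial_{z_1}+(1-a)z_2\partial_{z_2}$ that spans $\DD=\ker\omega_1^a\cap\ker\omega_2^a$. Since the logarithmic monodromy is independent of the transverse holomorphic structure (by the preceding lemma) and may be computed in any convenient coordinates by the preceding remarks, the idea is to pass to the holomorphic flow where everything is explicit. First I would recall that the complex flow of $X$ through a point $(z_1,z_2)$ is $(z_1\rme^{a\tau},z_2\rme^{(1-a)\tau})$ for $\tau\in\C$; the real trajectory of the foliation is obtained by letting $\tau$ range over $\C$, and the leaves are the images of these maps.

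Next I would set up the return map. On the Hopf circle $S^1\times\{0\}$ the basepoint is $(1,0)$ and the chosen transversal is $\{1\}\times\C$, with complex coordinate $z=z_2$ and the framing $\partial_{z_2}$ described before Definition~\ref{defn:log-m}. A nearby leaf starts at $(1,z)$ with $z$ small and winds once around $S^1\times\{0\}$ in the first factor; the first return to the transversal $\{1\}\times\C$ occurs when the first coordinate $\rme^{a\tau}$ has argument returning to $0\bmod 2\pi$ and modulus returning to~$1$, i.e.\ when $\rme^{a\tau}=1$, so $a\tau=2\pi\rmi$, giving $\tau=2\pi\rmi/a$. Substituting this into the second coordinate, the return map on the transversal is
\[
\varphi(z)=z\,\rme^{(1-a)\tau}=z\,\rme^{2\pi\rmi(1-a)/a}.
\]
This is linear in $z$, so $\varphi'(0)=\rme^{2\pi\rmi(1-a)/a}$, and tracking the continuous branch of $\log\varphi_t'(0)$ from $t=0$ to $t=2\pi$ as $\tau$ runs along the segment from $0$ to $2\pi\rmi/a$ yields $\log\varphi_{2\pi}'(0)=2\pi\rmi(1-a)/a$, which is exactly the claimed value for $S^1\times\{0\}$. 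The value for $\{0\}\times S^1$ follows by the symmetry $a\leftrightarrow 1-a$ already exhibited by the diffeomorphism $(z_1,z_2)\mapsto(-z_2,z_1)$.

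The main obstacle, and the point requiring genuine care, is the bookkeeping of the \emph{continuous} logarithm rather than just its endpoint value: one must verify that the homotopy class of the loop traced by $\varphi_t'(0)$ is the one produced by letting $\tau$ travel along the straight segment to $2\pi\rmi/a$, so that no extra multiple of $2\pi\rmi$ is introduced. Concretely, I would check that the real-time parametrisation $t\mapsto\tau(t)$ realising one loop of the leaf around $S^1\times\{0\}$ can be chosen so that $\re(a\tau(t))$ and the winding of $\rme^{a\tau(t)}$ behave monotonically, matching the condition $\log\varphi_0'(0)=0$ and picking out the correct branch at $t=2\pi$. Once this branch tracking is pinned down, the computation is immediate; the remaining verifications (that the chosen framing $\partial_{z_2}$ is the oriented conformal framing, and that the orientation of the leaves agrees with the direction of increasing parameter) are routine consistency checks against the conventions fixed before the statement.
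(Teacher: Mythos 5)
Your proposal is correct and is essentially the paper's own argument: the paper justifies replacing $S^3$ by $S^1\times\C$ (using that the plane field $\ker\omega_1^a\cap\ker\omega_2^a$ is transverse to $S^3$ and that the two hypersurfaces share tangent spaces along the Hopf circle, with the product framing agreeing with $\partial_{z_2}$), restricts $\omega^a$ there, and obtains exactly your parametrised curves $t\mapsto\bigl(\rme^{\rmi t},z\rme^{\rmi t(1-a)/a}\bigr)$, from which the continuous branch of $\log\varphi_t'(0)=\rmi t(1-a)/a$ gives the stated value. Your route via the complex flow of $X$ restricted to $\tau=\rmi t/a$ produces the identical curves, and your branch-tracking concern is resolved in the same way, so the two proofs coincide in substance.
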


\begin{proof}
By the proof of Lemma~\ref{lem:P-domain}, the complex $1$-form
$\omega^a$ defines a plane field on $\C^2\setminus\{(0,0)\}$
transverse to~$S^3$. Therefore, for the computation of
the logarithmic monodromy of $S^1\times\{0\}$ we may replace
$S^3$ by $S^1\times\C$, which has the same tangent
spaces along that Hopf circle. Moreover, the trivialisation
$S^1\times\C$ of the normal bundle accords with the transverse
holomorphic structure and trivialisation defined by~$\partial_{z_2}$.

The complex $1$-form induced by $\omega^a$ on $S^1\times\C$ can be
written as
\[ a\rme^{\rmi\theta}\,\rmd z-(1-a)\rmi\rme^{\rmi\theta}z\,\rmd\theta.\]
So the induced flow is given by the vector field
\[ \partial_{\theta}+\frac{1-a}{a}\rmi z\partial_z,\]
and the flow lines are parametrised by
\[ t\longmapsto\bigl(\rme^{\rmi t},z\rme^{\rmi t(1-a)/a}\bigr).\]
The claimed logarithmic monodromy follows. For $\{0\}\times S^1$
the computation is analogous.
\end{proof}

\begin{ex}
The orientation-preserving diffeomorphism of $S^3$ given by
$(z_1,z_2)\mapsto(\oz_1,\oz_2)$ pulls back $\overline{\omega^a}$ to
$\omega^{\overline{a}}$. So this diffeomorphism sends
$\FF^{\overline{a}}$ to $\FF^a$ with reversed orientation and coorientation,
and it maps each Hopf circle to itself. This is consistent with
the computation in the preceding proposition, since the negative
complex conjugate of $2\pi\rmi(1-a)/a$ is $2\pi\rmi(1-\oa)/\oa$.
\end{ex}

We now turn to the discrete family.
The Hopf circle $S^1\times\{0\}$ is a closed leaf
of each of the foliations~$\FF_n$.

\begin{prop}
\label{prop:log-n}
For $n\in\N$, the logarithmic monodromy of $S^1\times\{0\}$ in
$\FF_n$ equals $2\pi\rmi/n$.
\end{prop}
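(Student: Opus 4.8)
The plan is to follow the proof of Proposition~\ref{prop:log-a} almost verbatim, replacing the parametric $1$-form $\omega^a$ by $\omega_n$. First I would observe that, just as in the parametric case, $\omega_n=(nz_1+z_2^n)\,\rmd z_2-z_2\,\rmd z_1$ spans the complex line field of the holomorphic vector field $X_n=(nz_1+z_2^n)\partial_{z_1}+z_2\partial_{z_2}$, whose linear part has eigenvalues $n$ and~$1$. A direct check gives $X_n(|z_1|^2+|z_2|^2)=n|z_1|^2+\oz_1 z_2^n+|z_2|^2$, which is close to $n\neq 0$ near $S^1\times\{0\}$, so the plane field $\DD=\ker\omega_1\cap\ker\omega_2$ is transverse to $S^3$ there. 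Hence, exactly as in Proposition~\ref{prop:log-a}, I may replace $S^3$ by $S^1\times\C$ along the Hopf circle, use the same Seifert framing given by the type $(1,0)$ tangent vector $\partial_{z_2}$, and take $z=z_2$ as transverse holomorphic coordinate.

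Next I would restrict $\omega_n$ to $S^1\times\C$ by setting $z_1=\rme^{\rmi\theta}$ and $z_2=z$, obtaining
\[ \omega_n=(n\rme^{\rmi\theta}+z^n)\,\rmd z-\rmi\rme^{\rmi\theta}z\,\rmd\theta, \]
and solve $\omega_n(V)=0$ for the foliation vector field $V=\partial_\theta+c(\theta,z)\partial_z$. This yields
\[ c(\theta,z)=\frac{\rmi\rme^{\rmi\theta}z}{n\rme^{\rmi\theta}+z^n}
=\frac{\rmi}{n}\,z+O(z^{n+1}). \]
The flow lines therefore satisfy $\dot z=\tfrac{\rmi}{n}z+O(z^{n+1})$, so the linearisation of the holonomy along the leaf is $z\mapsto z\,\rme^{2\pi\rmi/n}$, giving $\varphi_{2\pi}'(0)=\rme^{2\pi\rmi/n}$. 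Taking the continuous logarithm normalised by $\log\varphi_0'(0)=0$ then yields the claimed value $2\pi\rmi/n$.

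The one point that genuinely needs justification — and the step I expect to be the only real obstacle — is that the resonant term $z_2^n\,\rmd z_2$ drops out of the answer, including in the borderline case $n=1$ where it is merely quadratic. This is immediate from the definition: the logarithmic monodromy is built from $\varphi_t'(0)$, hence from the linearisation of the holonomy flow along the leaf, and $z_2^n$ (with $n\geq 1$) contributes to $c(\theta,z)$ only at order $z^{n+1}\geq z^2$, so never to its linear part. As a consistency check I would note that the linearisation of $X_n$ has eigenvalues $(n,1)$, which after rescaling is the linear Poincar\'e pair $(a,1-a)$ with $a=n/(n+1)$; this is precisely the foliation $\FF^{n/(n+1)}$ to which $\FF_n$ is tied by the jump homotopy used for the Bott invariant. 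Proposition~\ref{prop:log-a} gives the logarithmic monodromy $2\pi\rmi(1-a)/a=2\pi\rmi/n$ of $S^1\times\{0\}$ in $\FF^{n/(n+1)}$, in full agreement with the direct computation.
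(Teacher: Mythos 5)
Your proposal is correct and follows essentially the same route as the paper: restrict $\omega_n$ to $S^1\times\C$ along the Hopf circle, solve for the common kernel vector field $\partial_\theta+\frac{\rmi z}{n+\rme^{-\rmi\theta}z^n}\partial_z$, and read off the linearisation $\frac{\rmi}{n}z$, the resonant term contributing only at order $z^{n+1}$. The extra verifications you include (transversality of $X_n$ to $S^3$ near the closed leaf, and the consistency check against $\FF^{n/(n+1)}$ via Proposition~\ref{prop:log-a}) are sound but not needed beyond what the paper's argument already establishes.
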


\begin{proof}
As in the preceding proof, we replace $S^3$ by $S^1\times\C$,
where the complex $1$-form induced by $\omega_n$ can be written as
\[ n\rme^{\rmi\theta}\,\rmd z-\rmi\rme^{\rmi\theta}z\,\rmd\theta+
z^n\,\rmd z.\]
The common kernel flow near $S^1\times\{0\}$
is given by the vector field
\[ \partial_{\theta}+\frac{\rmi z}{n+\rme^{-\rmi\theta}z^n}\,\partial_z
=\partial_{\theta}+\frac{\rmi z}{n}\,\partial_z+O(z^2).\]
It follows that the logarithmic monodromy is the same as for the
flow
\[ t\longmapsto\bigl(\rme^{\rmi t},z\rme^{\rmi t/n}\bigr).\qedhere\]
\end{proof}
\section{Topology of the flows}
\label{section:topology}
In this section we give explicit descriptions of the
Poincar\'e foliations.
Specifically, we determine the closed leaves and the limiting behaviour
of the non-closed ones.
\subsection{The parametric family}
As observed earlier, each $\FF^a$ contains the Hopf circles
$S^1\times\{0\}$ and $\{0\}\times S^1$ as closed leaves. Depending
on the value $a\in\PP$, these may be the only closed leaves, or all
leaves may be closed:

\begin{prop}
\label{prop:P-leaves}
For $a\in\C\setminus\R$, the Hopf circles are the only closed
leaves of $\FF^a$. Every other leaf is asymptotic to
the two Hopf circles, one at either end.

For $a\in(0,1)$, all leaves apart from the Hopf circles are curves
of constant slope $a/(1-a)$ on the
Hopf tori $\{|z_1|=\mathrm{const.}\}$, regarded as boundary of
a tubular neighbourhood of the Hopf circle~$S^1\times\{0\}$.
\end{prop}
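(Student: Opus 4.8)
The plan is to analyze the leaves of $\FF^a$ directly from the explicit flow on $\C^2\setminus\{(0,0)\}$ and then restrict to $S^3$. Recall from the proof of Lemma~\ref{lem:P-domain} that the common kernel foliation is the complex line field spanned by $X=az_1\partial_{z_1}+(1-a)z_2\partial_{z_2}$, and by Lemma~\ref{lem:P-domain} this field is transverse to $S^3$, so the leaves of $\FF^a$ are exactly the traces on $S^3$ of the integral surfaces of $X$. The flow of $X$ on $\C^2$ is explicit: a point $(z_1,z_2)$ moves along
\[ s\longmapsto\bigl(\rme^{as}z_1,\,\rme^{(1-a)s}z_2\bigr),\qquad s\in\C, \]
since $X$ is linear and diagonal. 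The orbit of a point with both coordinates nonzero is thus the image of the holomorphic map $s\mapsto(\rme^{as}z_1,\rme^{(1-a)s}z_2)$; the two Hopf circles arise as the orbits where one coordinate vanishes.

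First I would treat the generic leaves. On the orbit through a point with $z_1,z_2\neq0$, the two moduli satisfy $|\rme^{as}z_1|=\rme^{\re(as)}|z_1|$ and $|\rme^{(1-a)s}z_2|=\rme^{\re((1-a)s)}|z_2|$. Writing $s=u+\rmi v$, the real parts $\re(as)=\re(a)u-\im(a)v$ and $\re((1-a)s)=(1-\re(a))u+\im(a)v$ govern how the point moves between the two Hopf circles. The key dichotomy is whether $\im(a)=0$ or not. For $a\in(0,1)$, the coefficient of $v$ vanishes, so the two moduli depend only on $u$, and fixing a level $|z_1|=\mathrm{const.}$ forces $u=\mathrm{const.}$; the remaining parameter $v$ then traces a closed curve on the Hopf torus $\{|z_1|=\mathrm{const.}\}$, and I would read off its slope from the two angular speeds. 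The angular increment on the $z_1$-factor is $\im(a)u+\re(a)v=\re(a)v$ and on the $z_2$-factor is $\im((1-a)s)=(1-\re(a))v$; the ratio of these gives constant slope $\re(a)/(1-\re(a))=a/(1-a)$ (as $a$ is real here), establishing the second claim. This matches the logarithmic monodromy $2\pi\rmi(1-a)/a$ computed in Proposition~\ref{prop:log-a}: all leaves on each Hopf torus are closed, being torus curves of rational or irrational slope, but in either case they remain on a single torus rather than being closed circles only when the slope is rational --- here I must be careful, and the cleaner statement is that they are curves of constant slope, which is exactly what is asserted.

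For $a\in\C\setminus\R$, I would show the only closed leaves are the Hopf circles and that every other leaf is asymptotic to both. Here $\im(a)\neq0$, so the coefficient $-\im(a)$ of $v$ in $\re(as)$ and $+\im(a)$ in $\re((1-a)s)$ are nonzero and opposite. The plan is to show that as the real time parameter is driven in one direction the orbit approaches one Hopf circle and in the other direction the other Hopf circle: one modulus tends to $0$ while the other stays bounded away from $0$ after projecting back to $S^3$, using that $\alpha|z_1|^2+\beta|z_2|^2\neq0$ controls the transverse direction. Concretely, I would parametrize the actual leaf on $S^3$ by following the $X$-flow and renormalizing onto the sphere, then analyze the limit of $|z_1|/|z_2|$ along the flow; the condition $a/(1-a)\notin\R^-$ (Poincar\'e domain) guarantees the ratio is monotone in the right sense so that no other orbit can close up. The main obstacle will be making the asymptotic analysis rigorous: I must pass from the ambient $\C$-flow of $X$ to the genuine leaf on $S^3$ via the radial projection, and verify that the transverse monotonicity of $|z_1|^2/|z_2|^2$ (equivalently the sign of $\re(a)-\re(a)$-weighted combinations) forces every non-Hopf orbit to spiral from one circle to the other without periodicity. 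Once this monotonicity is established from the explicit exponential flow, the statements about closed leaves and asymptotics follow directly.
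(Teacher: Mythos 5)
Your setup is essentially the paper's in inverted form: the leaves of $\FF^a$ are the traces on $S^3$ of the orbits of the complex flow $s\mapsto(\rme^{as}z_1,\rme^{(1-a)s}z_2)$, which is precisely the inverse of the first integral $\log z_2-\frac{1-a}{a}\log z_1$ that the paper integrates. Your treatment of $a\in(0,1)$ is correct (after your own self-correction about ``closed curve''), but you should say explicitly why the trace on $S^3$ has $u=\re(s)$ constant: the function $u\mapsto\rme^{2au}|z_1^0|^2+\rme^{2(1-a)u}|z_2^0|^2$ is strictly increasing because $a,1-a>0$, so the sphere equation pins down~$u$; it is this, not ``fixing $|z_1|$'', that confines the leaf to a single Hopf torus.

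The gap is in the case $a\in\C\setminus\R$, which is the substance of the proposition. First, ``following the $X$-flow and renormalizing onto the sphere'' by radial projection does not produce the leaves: the radial direction is not contained in the complex orbit of $X$, so the radial projection of a real-time $X$-orbit leaves that orbit and is not an integral curve of $\DD\cap TS^3$. The correct object is the intersection of the two-dimensional complex orbit with $S^3$, i.e.\ the curve $\bigl\{s=u+\rmi v:\ \rme^{2(\re(a)u-\im(a)v)}|z_1^0|^2+\rme^{2((1-\re(a))u+\im(a)v)}|z_2^0|^2=1\bigr\}$ in the $s$-plane. Second, the key assertion --- that $|z_1|/|z_2|$ is strictly monotone along this curve, so that each non-Hopf leaf is an embedded line running from one Hopf circle to the other and cannot close up --- is exactly what has to be proved, and you only assert it. The justification offered is wrong on both counts: the Poincar\'e-domain condition is automatic for $a\notin\R$ and is not what drives the monotonicity (for $a\in(0,1)$, also in the Poincar\'e domain, the ratio is constant along leaves, not monotone), and ``the sign of $\re(a)-\re(a)$-weighted combinations'' is vacuous. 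What is actually needed is $v:=\im\bigl((1-a)/a\bigr)=-\im(a)/|a|^2\neq0$: the paper shows that precisely when $v\neq0$ the two leaf equations can be solved for $\theta_1,\theta_2$ as single-valued smooth functions of $r_1\in(0,1)$, which simultaneously gives connectedness of the leaf, non-closedness, and the asymptotics as $r_1\to0$ and $r_1\to1$. You would need to carry out the equivalent computation in your parametrisation --- for instance, show that $\rmd\bigl(\log|z_1|-\log|z_2|\bigr)$ is nowhere zero on the curve above when $\im(a)\neq0$ --- before the first half of the proposition is established.
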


\begin{proof}
In the complement of the Hopf link we can write
\[ \omega^a=az_1z_2\,\rmd\Bigl(\log z_2-\frac{1-a}{a}\log z_1\Bigr).\]
So each leaf of $\FF^a$ in this domain can be
described by an equation
\[ \log z_2-\frac{1-a}{a}\log z_1=l_0+i\theta_0\]
for some real constants $l_0,\theta_0$.

Write $z_j=r_j\rme^{\rmi\theta_j}$, $j=1,2$, and use
$r_1\in(0,1),\theta_1,\theta_2$ as coordinates outside the Hopf link.
Define $u,v\in\R$ by $u+\rmi v=(1-a)/a$.
The leaves are then given by equations as follows:
\begin{equation}
\label{eqn:leaves}
\left\{\begin{array}{rcl}
\displaystyle{\log\sqrt{1-r_1^2} -
  u\log r_1 +v\theta_1}             & = & l_0,\\[2mm]
\theta_2-u\theta_1-v\log r_1        & = & \theta_0.
\end{array}\right.
\end{equation}
Notice that the ambiguity in the definition of the complex logarithm
is absorbed into the constants.

For $a\in\C\setminus\R$, and hence $v\neq 0$, these equations allow us
to express $\theta_1,\theta_2$ as functions of~$r_1\in(0,1)$, and so
they describe leaves asymptotic to the two Hopf circles:
\begin{equation}
\label{eqn:asymptotic}
\left\{\begin{array}{rcl}
\theta_1 & = & \displaystyle{\frac{1}{v}\Bigl(l_0+u\log r_1-
               \log\sqrt{1-r_1^2}\,\Bigr)},\\[3mm]
\theta_2 & = & \displaystyle{\theta_0+\frac{u}{v}\, l_0+\frac{1}{v}
               \Bigl((u^2+v^2)\log r_1-u\log\sqrt{1-r_1^2}\,\Bigr)}.
\end{array}\right.
\end{equation}
The precise asymptotic behaviour in dependence on the
value of $a\in\C\setminus\R$ will be discussed below.

For $a\in (0,1)$, so that $v=0$ and $u=(1-a)/a$,
equations (\ref{eqn:leaves}) can be written as
\begin{equation}
\label{eqn:slope}
\left\{\begin{array}{rcl}
\displaystyle{\log\sqrt{1-r_1^2}-\frac{1-a}{a}\log r_1}   & = & l_0,\\[3mm]
\displaystyle{\theta_2-\frac{1-a}{a}\,\theta_1}           & = & \theta_0.
\end{array}\right.
\end{equation}
The first of these equations describes a Hopf torus
$\{r_1=\mathrm{const.}\}$. (It is straightforward to check
that for each $a\in (0,1)$ the left-hand side of the first equation
defines a strictly monotone decreasing function in $r_1$
with image all of~$\R$.)
The second equation defines a curve of constant slope
$a/(1-a)$ on that torus. The foliation, including the Hopf link,
can be described as the flow of the Killing vector field
$a\partial_{\theta_1}+(1-a)\partial_{\theta_2}$ for the standard metric
on~$S^3$.
\end{proof}

The preceding proposition tells us that the leaves of $\FF^a$ are all closed
if and only if $a\in(0,1)\cap\Q$. If $a=1/2$, the foliation defines
the Hopf fibration of~$S^3$. For other rational values of~$a$,
the foliation defines a Seifert fibration with at least one
singular fibre.

\begin{prop}
\label{prop:Seifert}
Given $a\in (0,1)\cap\Q$,
write $a/(1-a)=p_1/p_2$ with $p_1,p_2$
coprime natural numbers. Choose integers $q_1',q_2'$ such that
\[ \begin{vmatrix}
p_1   & p_2\\
-q_1' & q_2'
\end{vmatrix}
=1,\]
and define integers $m_1,m_2$ by the requirement that $q_j'=m_jp_j+q_j$ with
$0\leq q_j<p_j$, $j=1,2$.

Then the foliation $\FF^a$ defines a Seifert fibration
of $S^3$ with unnormalised Seifert invariants
\[ \bigl(g=0,(p_1,q_1'),(p_2,q_2')\bigr)\]
and normalised Seifert invariants
\[ \bigl(g=0,b=m_1+m_2,(p_1,q_1),(p_2,q_2)\bigr).\]
The quotient orbifold is $S^2(p_1,p_2)$.
\end{prop}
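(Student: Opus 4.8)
The plan is to realise $\FF^a$ as the orbit foliation of an explicit weighted circle action and then read off the Seifert data by a Heegaard-torus computation. By Proposition~\ref{prop:P-leaves}, for $a\in(0,1)$ the leaves of $\FF^a$ are the flow lines of the Killing field $a\partial_{\theta_1}+(1-a)\partial_{\theta_2}$; since $a/(1-a)=p_1/p_2$ with $\gcd(p_1,p_2)=1$, this field is proportional to $p_1\partial_{\theta_1}+p_2\partial_{\theta_2}$, so the leaves are exactly the orbits of the $S^1$-action $t\cdot(z_1,z_2)=(\rme^{\rmi p_1 t}z_1,\rme^{\rmi p_2 t}z_2)$ on $S^3\subset\C^2$. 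First I would determine the isotropy: on $\{z_1z_2\neq 0\}$ coprimality forces trivial isotropy, whereas the Hopf circle $S^1\times\{0\}$ has isotropy $\Z/p_1$ and $\{0\}\times S^1$ has isotropy $\Z/p_2$. Hence these two Hopf circles are the only exceptional fibres, of multiplicities $p_1$ and $p_2$, and the quotient is $S^2$ (the weighted projective line is topologically $\C P^1$) carrying two cone points of these orders, i.e.\ $S^2(p_1,p_2)$; in particular $g=0$.

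Next I would compute the local invariants from the genus-one Heegaard splitting $S^3=V_1\cup_T V_2$ with $V_1=\{|z_1|\geq|z_2|\}$, $V_2=\{|z_1|\leq|z_2|\}$, and $T=\{|z_1|=|z_2|\}$ the Clifford torus, the cores of $V_1,V_2$ being the two exceptional fibres. On $H_1(T)$ take the basis $c_1,c_2$ given by the $\theta_1$- and $\theta_2$-circles, with $c_1\cdot c_2=1$. A direct reading of the action gives the class of a regular fibre as $h=p_1c_1+p_2c_2$, while the meridians are $\mu_1=c_2$ and $\mu_2=c_1$. The unnormalised Seifert invariants are then obtained by expressing the meridians in the basis $(h,q)$, where $q$ is the boundary on $T$ of a section of the (trivial) circle bundle over the annulus obtained by deleting the two cone points from the base orbifold.

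The crux is to pin down this section curve, and I expect the orientation and sign bookkeeping to be the main obstacle. Completing $h$ to a $\Z$-basis of $H_1(T)$ amounts to a unimodularity condition, and I would take $q=-q_1'c_1+q_2'c_2$, for which $q\cdot h=-(p_1q_2'+p_2q_1')$ equals $\pm1$ precisely under the stated condition $\det\left(\begin{smallmatrix}p_1&p_2\\-q_1'&q_2'\end{smallmatrix}\right)=1$. Inverting the change-of-basis matrix then yields $\mu_1=p_1q+q_1'h$ and $\mu_2=-p_2q+q_2'h$, exhibiting the exceptional fibres with unnormalised invariants $(p_1,q_1')$ and $(p_2,q_2')$. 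The delicate points are the orientation of $h$ induced by the flow, the sign of $c_1\cdot c_2$, and the orientations of the two meridians; these must be fixed consistently with the given orientation of $S^3$ so that the multiplicities come out positive and each $q_j'$ is paired with the correct core.

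Finally I would pass to the normalised form. Starting from the unnormalised data with no separate Euler summand (equivalently $b=0$) and using the standard move $(\alpha,\beta)\mapsto(\alpha,\beta-\alpha)$ compensated by $b\mapsto b+1$, which preserves the rational Euler number, I write $q_j'=m_jp_j+q_j$ with $0\leq q_j<p_j$; reducing each $q_j'$ into this range raises $b$ from $0$ by $m_j$, giving $b=m_1+m_2$ and the normalised invariants $(g=0,b=m_1+m_2,(p_1,q_1),(p_2,q_2))$, with base orbifold $S^2(p_1,p_2)$ as already identified.
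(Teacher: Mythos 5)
Your proposal is correct and follows essentially the same route as the paper: identify $\FF^a$ with the orbit foliation of the weighted $S^1$-action via Proposition~\ref{prop:P-leaves}, locate the two exceptional Hopf fibres and the base orbifold $S^2(p_1,p_2)$, extract the unnormalised invariants $(p_1,q_1')$, $(p_2,q_2')$ from a homological computation of fibre, section and meridian classes on a torus separating the two singular fibres, and then normalise by the standard moves. The sign and orientation bookkeeping you defer is exactly what the paper settles by computing the intersection number $(p_2\mu_1+p_1\lambda_1)\bullet(-q_2'\mu_1+q_1'\lambda_1)=1$ on $\partial T_1$ to fix the orientation of the section compatibly with that of $S^3$ and of the orbits.
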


\begin{proof}
We follow the recipe in \cite{nera78} for computing the Seifert invariants;
for easy reference we retain their notation.
By equation (\ref{eqn:slope}) in
the preceding proof, the leaves of $\FF^a$ are the orbits of the
$S^1$-action on $S^3$ given by
\[ \theta(z_1,z_2)=(\rme^{\rmi p_1\theta}z_1,\rme^{\rmi p_2\theta}z_2).\]
The singular orbits are the Hopf circles
$O_1=S^1\times\{0\}$ and $O_2=\{0\}\times S^1$. Disjoint invariant tubular
neighbourhoods of these two orbits are given by
\[ T_1=\{|z_1|^2\geq 3/4\}\;\;\text{and}\;\; T_2=\{|z_2|^2\geq 3/4\}.\]
Set
\[ M_0=S^3\setminus\Int(T_1\cup T_2).\]
Then $M_0\rightarrow M_0/S^1$ is an $S^1$-bundle over an annulus,
and the quotient orbifold $S^3/S^1$ is a $2$-sphere with two cone
points of order $p_1,p_2$, respectively, given by the multiplicity
of the singular orbits.

Write $\mu_j$ for the meridian of $T_j$. We think of these two
curves as a homology class of curves on any Hopf torus.
Take $\lambda_1:=\mu_2$ and $\lambda_2:=\mu_1$ as the
standard longitudes.
The non-singular orbits are in the class $p_1\lambda_1+p_2\lambda_2$.
A homologically dual curve is $q_1'\lambda_1-q_2'\lambda_2$.
This defines a section
$R\subset M_0$ of the $S^1$-bundle $M_0\rightarrow M_0/S^1$.
Notice that the homological intersection of these two
curves on $\partial T_1$ is
\[ (p_2\mu_1+p_1\lambda_1)\bullet(-q_2'\mu_1+q_1'\lambda_1)=1.\]
It follows that the orientation of $R$ compatible with the
standard orientation of $S^3$ and the orientation of the
$S^1$-orbits is the one for which the oriented boundary curves of this
section are
\[ R_1:=q_1'\lambda_1-q_2'\lambda_2\subset\partial T_1\]
and
\[ R_2:=-(q_1'\lambda_1-q_2'\lambda_2)\subset\partial T_2.\]
In the respective solid torus these curves are homologous to
\[ q_1'O_1\subset T_1\;\;\text{and}\;\; q_2'O_2\subset T_2.\]
This yields the unnormalised Seifert invariants.
The normalised Seifert invariants follow from the equivalences
described in \cite[Theorem~1.1]{nera78}.
\end{proof}

\begin{rem}
For $p_1=p_2=1$, the quotient orbifold $S^2(p_1,p_2)$ is simply the
$2$-sphere. If exactly one of the $p_i$ equals~$1$,
we have a tear-drop. If both $p_1$ and $p_2$ are greater
than~$1$, the orbifold is a spindle. Thus, all possible
tear-drops are realised, but only spindles with coprime
multiplicities at the cone points.
\end{rem}

We now take a closer look at the asymptotic behaviour of the
leaves of $\FF^a$ for $a\in\C\setminus\R$, described by
equations~(\ref{eqn:asymptotic}).
Recall that $u+\rmi v=(1-a)/a$. If we write $a=x+\rmi y$, this gives
\[ u=\frac{x-(x^2+y^2)}{x^2+y^2}.\]
So the case $u=0$ is equivalent to the condition $x=x^2+y^2$, which is the
same as $|a-\frac{1}{2}|=\frac{1}{2}$. Similarly, we have
\[ u>0\;\;\text{if and only if}\;\; |a-\frac{1}{2}|<\frac{1}{2}\]
and
\[ u<0\;\;\text{if and only if}\;\; |a-\frac{1}{2}|>\frac{1}{2}.\]
The imaginary part of $(1-a)/a$ is
\[ v= -\frac{y}{x^2+y^2},\]
which is always non-zero for $a\in\C\setminus\R$.

We write $\FF^a_1,\FF^a_2$ for the $2$-dimensional foliations on
the complement of the Hopf link defined by only the first or the
second equation in~(\ref{eqn:asymptotic}), respectively.
Then $\FF^a=\FF^a_1\cap\FF^a_2$.

\vspace{1mm}

\noindent
{\sc First case:} $u=0$. Here the limiting behaviour of $\theta_1,\theta_2$
is described by
\[ \left.\begin{array}{rcl}
\theta_1 & \longrightarrow & l_0/v\\
\theta_2 & \longrightarrow & -\sign (v)\,\infty
\end{array}\right\}
\;\;\text{for $r_1\searrow 0$} \]
and
\[ \left.\begin{array}{rcl}
\theta_1 & \longrightarrow & \sign(v)\,\infty\\
\theta_2 & \longrightarrow & \theta_0
\end{array}\right\}
\;\;\text{for $r_1\nearrow 1$}. \]
So the leaves of $\FF^a$ approach a limiting angle in the
direction transverse to the respective Hopf circle, and
they circle infinitely often in the direction
parallel to that Hopf circle.

The leaves of $\FF^a_1$ are open cylinders asymptotic at one end
to the Hopf circle $\{ r_1=0\}=O_2$, with a well-defined tangent plane
determined by the limiting angle~$\theta_1$.
Thus, near $O_2$ the foliation $\FF^a_1$ looks like an open
book near its binding. At the other end,
the cylinder sits like an ever thinner tube
around the Hopf circle $\{r_1=1\}=O_1$, winding infinitely often along
it.

\vspace{1mm}

\noindent
{\sc Second case:} $u>0$. Here $\theta_1$ and $\theta_2$ are monotone
functions of $r_1$ with
\[ \left.\begin{array}{rcl}
\theta_1 & \longrightarrow & -\sign (v)\,\infty\\
\theta_2 & \longrightarrow & -\sign (v)\,\infty
\end{array}\right\}
\;\;\text{for $r_1\searrow 0$} \]
and
\[ \left.\begin{array}{rcl}
\theta_1 & \longrightarrow & \sign(v)\,\infty\\
\theta_2 & \longrightarrow & \sign(v)\,\infty
\end{array}\right\}
\;\;\text{for $r_1\nearrow 1$}. \]

The cylindrical leaves of $\FF^a_1$ tube towards $O_1$ as before, but now the
other end of each cylinder scrolls towards $O_2$, encircling it infinitely
often.

\vspace{1mm}

\noindent
{\sc Third case:} $u<0$. In this case we have
\[ \left.\begin{array}{rcr}
\theta_1 & \longrightarrow & \sign (v)\,\infty\\
\theta_2 & \longrightarrow & -\sign (v)\,\infty
\end{array}\right\}
\;\;\text{for $r_1\searrow 0$} \]
and
\[ \left.\begin{array}{rcr}
\theta_1 & \longrightarrow & \sign(v)\,\infty\\
\theta_2 & \longrightarrow & -\sign(v)\,\infty
\end{array}\right\}
\;\;\text{for $r_1\nearrow 1$}. \]
One checks easily that the derivatives of $\theta_1$ and $\theta_2$
with respect to $r_1$ both change sign exactly once.
The cylindrical leaves of $\FF^a_1$ tube towards $O_1$ and scroll
towards $O_2$ as in the second case, but now they change the
$\theta_1$-direction once, making them look like sombreros,
see Figure~\ref{figure:sombrero}.

\vspace{1mm}

In all three cases, the cylindrical leaves of $\FF^a_2$ show
the analogous behaviour, with the roles of the two Hopf circles
interchanged.

\begin{figure}[h]
\labellist
\small\hair 2pt
\pinlabel $\theta_1$ [l] at 218 503
\pinlabel $O_1$ [l] at 210 13
\pinlabel $O_2$ [bl] at 378 205
\endlabellist
\centering
\includegraphics[scale=0.6]{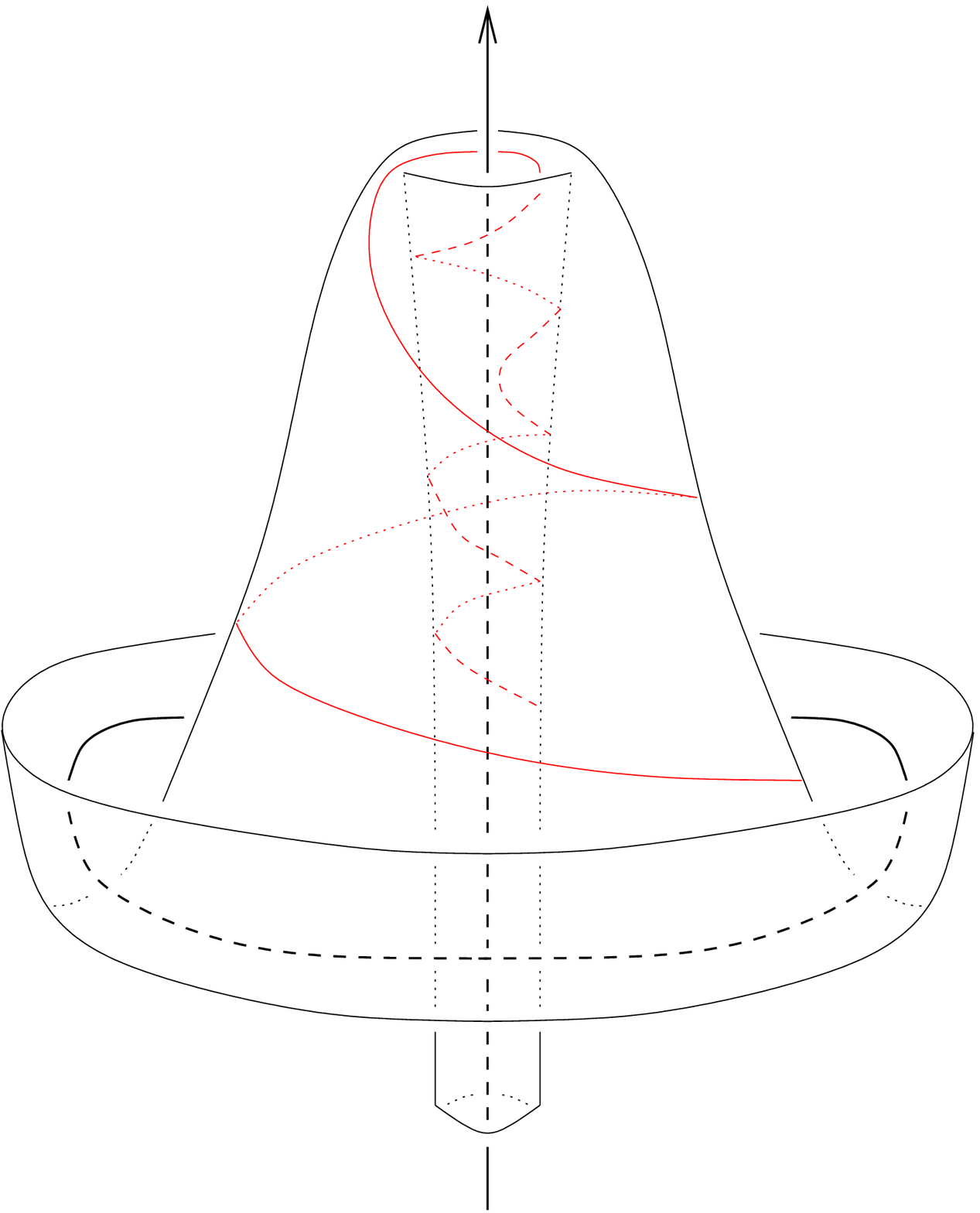}
  \caption{The `sombrero'.}
  \label{figure:sombrero}
\end{figure}
\subsection{The discrete family}
\label{subsection:topology-discrete}
For each $n\in\N$, the $1$-form $\omega_n$ defined in
Section~\ref{subsection:Poincare-discrete} may be regarded as
a holomorphic $1$-form on $\C^2$. Outside the origin, it defines
a foliation $\CC_n:=\ker\omega_n$ by holomorphic curves,
which we refer to as \emph{complex leaves}.

The complex line $\C\times\{0\}$ is a leaf of $\CC_n$, and it
intersects $S^3$ in the closed leaf $S^1\times\{0\}$ of~$\FF_n$.
On the complement $\C\times\C^*$ of that complex line,
the $1$-form $\omega_n$ can be written as
\[ \omega_n=z_2^{n+1}\,\rmd\Bigl(\log z_2-\frac{z_1}{z_2^n}\Bigr).\]
From this description, which we shall use to analyse the topology of $\FF_n$
in the complement
\[ S^3_0:=S^3\setminus\bigl(S^1\times\{0\}\bigr)\]
of the Hopf circle $S^1\times\{0\}$, we see
that each leaf of $\CC_n$ in $\C\times\C^*$ is given by an equation
\begin{equation}
\label{eqn:n-complex-leaf}
\log z_2-\frac{z_1}{z_2^n}=c_0
\end{equation}
for some complex constant~$c_0$. Observe that the solution set
of this equation is the image of the injective map
\[ \C\ni w\longmapsto \bigl((w-c_0)\rme^{nw},\rme^w\bigr),\]
so it is indeed connected. We shall see that the intersection
of each complex leaf with $S^3$ is also connected, and thus constitutes
a leaf of~$\FF_n$.

\begin{prop}
\label{prop:n-asymptotic}
For each $n\in\N$, the Hopf circle $S^1\times\{0\}$ is the only
closed leaf of~$\FF_n$. Every other leaf is asymptotic to this
Hopf circle at both ends.
\end{prop}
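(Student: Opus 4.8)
The plan is to study each leaf through the explicit parametrisation of the complex leaves recorded just above the statement, intersecting the image of $w\mapsto\bigl((w-c_0)\rme^{nw},\rme^w\bigr)$ with $S^3$. First I would dispose of the Hopf circle $S^1\times\{0\}$, which is cut out by the complex leaf $\C\times\{0\}$ and is visibly a compact leaf. Every other leaf lies in $\C\times\C^*$ and is the image of this injective entire curve for some $c_0\in\C$. Writing $w=s+\rmi t$ and $c_0=\sigma+\rmi\tau$, the point lies on $S^3$ exactly when $|w-c_0|^2\rme^{2ns}+\rme^{2s}=1$, which (since $|z_2|^2=\rme^{2s}\le 1$) I rewrite as
\[ (s-\sigma)^2+(t-\tau)^2=R(s)^2,\qquad R(s)^2:=(1-\rme^{2s})\rme^{-2ns},\quad s\le 0.\]
Thus, viewed in the $w$-plane, the leaf is the level set $\{(t-\tau)^2=R(s)^2-(s-\sigma)^2\}$, lying over the set $D:=\{s\le 0\co R(s)\ge|s-\sigma|\}$, with two $t$-values over the interior of $D$ and one over its boundary.

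The heart of the proof is to show that $D$ is a single half-line $(-\infty,s_*]$; this yields connectedness of the intersection (so that it really is one leaf of $\FF_n$) and fixes its topology. Note that $R$ is a strictly decreasing bijection $(-\infty,0]\to[0,\infty)$. The key point is that $P(s):=R(s)+s$ and $Q(s):=R(s)-s$ are \emph{also} strictly decreasing bijections $(-\infty,0]\to[0,\infty)$. For $Q$ this is immediate from $R'<0$; for $P$ it reduces to the inequality $R'(s)<-1$, which unwinds to $\rme^{-ns}\bigl[n-(n-1)\rme^{2s}\bigr]>\sqrt{1-\rme^{2s}}$, valid for all $s<0$ and all $n\ge 1$ because the left-hand factor exceeds $1$ while the right-hand side is smaller than~$1$. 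The condition $R(s)\ge|s-\sigma|$ then decouples as $P(s)\ge\sigma$ together with $Q(s)\ge-\sigma$, each of which holds on a half-line $(-\infty,b]$ by monotonicity; intersecting the two half-lines gives $D=(-\infty,s_*]$.

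Once $D$ is identified, the level set is a single properly embedded arc: the two branches $t=\tau\pm\sqrt{R(s)^2-(s-\sigma)^2}$ join at $s=s_*$ and separate as $s\to-\infty$, so the leaf is homeomorphic to $\R$ and in particular non-compact. Hence $S^1\times\{0\}$ is the only closed leaf. Both topological ends of the leaf sit at $s\to-\infty$, where $\rme^{2s}=|z_2|^2\to 0$ and therefore $|z_1|^2=1-\rme^{2s}\to 1$; since $t\to\pm\infty$ along the two branches, the arguments $\arg z_2=t$ and $\arg z_1=\arg(w-c_0)+nt$ wind infinitely often, so the leaf accumulates on the \emph{entire} Hopf circle $S^1\times\{0\}$ at either end. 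This is the asserted asymptotics.

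I expect the only genuine obstacle to be the monotonicity of $P$, i.e.\ the inequality $R'(s)<-1$; the remaining assertions are bookkeeping with the explicit formulas, and the transfer from the $w$-plane to $S^3$ is harmless because the parametrising curve is injective.
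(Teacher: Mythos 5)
Your argument is correct, and at bottom it follows the same route as the paper: both proofs intersect the explicitly parametrised complex leaf with $S^3$, reduce to showing that a certain planar solution curve is a single properly embedded copy of $\R$, and then read off the asymptotics from $|z_2|\to 0$ together with the unbounded winding of the arguments. In fact your circle-type equation $(s-\sigma)^2+(t-\tau)^2=R(s)^2$ is the paper's system for the leaf in disguise: the paper uses the Seifert coordinate $z_1/z_2^n=x+\rmi y$, which on your leaf equals $w-c_0$, so $x=s-\sigma$ and $y=t-\tau$. Where you genuinely differ is in the device for the single-arc step. The paper solves its first leaf equation for $x$ as a globally defined function of $y\in\R$ via the implicit function theorem, using the estimate $|\partial r_2/\partial x|\le r_2/2$; this presents the leaf as a graph over $y$ in one stroke and sets up coordinates that are reused immediately afterwards (for the limit of $\theta_1(y)$ and for the linearising diffeomorphism of $\EE_n$). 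You instead solve for $t$ as a two-valued function of $s$ and control the domain by the monotonicity of $R(s)\pm s$, i.e.\ the inequality $R'(s)<-1$, which you verify correctly; this costs you the extra step of checking that the two branches glue to a single smooth arc at $s=s_*$, but it makes visible the turning point where the leaf is tangent to a torus $\{|z_2|=\mathrm{const.}\}$. Your treatment of the ends ($R(s)\sim\rme^{-ns}$ forces $t\to\pm\infty$, hence infinite winding of $\arg z_1$ and $\arg z_2$) is sound and in fact already contains the refinement --- accumulation on the \emph{whole} Hopf circle --- that the paper only carries out in the discussion following its proof.
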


\begin{proof}
We take $n\in\N$ as given and suppress it from the notation
whenever appropriate. Let $\hat{\C}=\C\cup\{\infty\}$ be the
Riemann sphere, and consider the Seifert fibration
\[ \begin{array}{rccc}
\pi_n\co & S^3       & \longrightarrow & \hat{\C}\\[1mm]
         & (z_1,z_2) & \longmapsto     & \displaystyle{\frac{z_1}{z_2^n}},
\end{array} \]
with fibres given by the orbits of the $S^1$-action
\[ \theta(z_1,z_2)=(\rme^{\rmi n\theta}z_1,\rme^{\rmi\theta}z_2).\]

On $\C\subset\hat{\C}$ we use the coordinate $z=x+\rmi y$.
As before we write $z_j=r_j\rme^{\rmi\theta_j}$.
Since $z_1=(x+\rmi y)z_2^n$ for $z_2\neq 0$,  on
$S^3_0=S^3\setminus\pi_n^{-1}(\infty)$
the radius $r_2$ is defined implicitly as a smooth function $r_2(x,y)$
(depending on~$n$) by the equation
\begin{equation}
\label{eqn:r2}
(x^2+y^2)r_2^{2n}+r_2^2=1,\;\;\; r_2>0.
\end{equation}
Thus, $S^3_0$ can be parametrised in terms of $x,y,\theta_2$ by
\[ (z_1,z_2)=\bigl( (x+\rmi y)\,r_2^n(x,y)\,\rme^{\rmi n\theta_2},
r_2(x,y)\,\rme^{\rmi\theta_2}\bigr).\]

From (\ref{eqn:n-complex-leaf}) and with $c_0=c_1+\rmi c_2$,
we then see that the intersection of each complex leaf with $S^3_0$
is given by a system of equations
\begin{equation}
\label{eqn:n-leaf}
\left\{\begin{array}{rcr}
x-\log r_2(x,y) & = & -c_1,\\
\theta_2-y      & = & c_2.
\end{array}\right.
\end{equation}

Implicit differentiation of (\ref{eqn:r2}) gives
\[ \frac{\partial r_2}{\partial x}=
\frac{-xr_2^{2n-1}}{n(x^2+y^2)r_2^{2n-2}+1},\]
from which we derive with $r_2^n\leq r_2$ the estimate
\begin{equation}
\label{eqn:r2-x}
\left|\frac{\partial r_2}{\partial x}\right|\leq
r_2\,\frac{|x|r_2^{n-1}}{(xr_2^{n-1})^2+1}\leq
\frac{r_2}{2}.
\end{equation}
So the partial derivative with respect to $x$ of the function
$(x,y)\mapsto x-\log r_2(x,y)$ lies in the interval $[1/2,3/2]$,
which means that the first equation in (\ref{eqn:n-leaf}) implicitly
defines $x$ as a smooth function of~$y\in\R$ (depending on $n$ and~$c_1$).
Hence, the solution curve of (\ref{eqn:n-leaf}) is parametrised by
\[ \R\ni y\longmapsto \bigl( x(y), y,\theta_2=y+c_2\bigr),\]
which verifies the claim made earlier that the intersection
of a complex leaf with $S^3$ gives a single leaf of~$\FF_n$.

For $y\rightarrow\pm\infty$ we have
\[ \frac{\sqrt{1-r_2^2}}{r_2^n}=\left|\frac{z_1}{z_2^n}\right|
\longrightarrow\infty,\]
and hence $r_2\rightarrow 0$, which proves the proposition.
\end{proof}

Next, as for the parametric family, we describe the limiting behaviour
of the angle $\theta_1(y)$ for $y\rightarrow\pm\infty$.
From $z_1=(x+\rmi y)r_2^n\rme^{\rmi n\theta_2}$ and (\ref{eqn:n-leaf})
we have
\[ \theta_1(y)=n(c_2+y)+\arg(x(y)+\rmi y).\]
The implicit definition of $x(y)$ in (\ref{eqn:n-leaf})
and the limiting behaviour $r_2\rightarrow 0$ for $y\rightarrow\pm\infty$
entail that $x(y)\rightarrow -\infty$ for $y\rightarrow\pm\infty$.
(One may notice that $x(y)=x(-y)$, and by implicit
differentiation one sees that the function $y\mapsto x(y)$
has a single local maximum at $y=0$.) It follows that
\[ \arg(x(y)+\rmi y)\in
\begin{cases}
[\pi/2,\pi]   & \text{for $y\gg 1$},\\
[-\pi/2,-\pi] & \text{for $y\ll -1$}.
\end{cases}\]
In fact, by a more careful analysis one can show that
\[ x(y)+c_1+\frac{1}{n}\log |y|\longrightarrow 0,\]
and hence
$\arg(x(y)+\rmi y)\rightarrow\pm\pi/2$ for $y\rightarrow\pm\infty$.
Our more rough estimate, however, is sufficient to
conclude that $\theta_1(y)\rightarrow\pm\infty$ for $y\rightarrow\pm\infty$.
Geometrically this means that the Hopf circle $S^1\times\{0\}$
is the $\alpha$- and $\omega$-limit set of each leaf in $\FF_n$.

\vspace{2mm}

In order to visualise the global topology of the foliation~$\FF_n$,
we introduce an auxiliary $2$-dimensional foliation~$\EE_n$
of~$S^3$. The flow
\[ \psi_t\co(z_1,z_2)\longmapsto(\rme^{\rmi nt}z_1,\rme^{\rmi t}z_2),\;\;\;
t\in\R,\]
on $S^3$ is along the fibres of the Seifert fibration
$\pi_n\co S^3\rightarrow\hat{\C}$.
From $\psi_t^*\omega_n=\rme^{\rmi (n+1)t}\omega_n$ we see that the flow
$\psi_t$ preserves the foliation~$\FF_n$. The Hopf circle
$S^1\times\{0\}$ is mapped to itself by~$\psi_t$, but on the
complement $S^3_0$ the flow is $2\pi$-periodic and
transverse to~$\FF_n$, since
\[ \omega_n(nz_1\partial_{z_1}+z_2\partial_{z_2})=z_2^{n+1}.\]
So each leaf of $\FF_n$ in $S^3_0$ sweeps out a cylindrical surface. We
write $\EE_n$ for the singular $2$-dimensional foliation of $S^3$ made up
of these surfaces and a single $1$-dimensional leaf $S^1\times\{0\}$.
From Proposition~\ref{prop:n-asymptotic} we deduce that
the closure of each $2$-dimensional leaf of $\EE_n$ is the
union of that leaf with $S^1\times\{0\}$.

Observe that in terms of
the coordinates $(x,y,\theta_2)$ on $S^3_0$, the flow
$\psi_t$ is simply given by
\[ \psi_t\co(x,y,\theta_2)\longmapsto (x,y,\theta_2+t).\]
With the description of the leaves of $\FF_n$ in $S^3_0$
given in (\ref{eqn:n-leaf}),
this tells us that the leaves of $\EE_n$ in $S^3_0$ are
the inverse images under $\pi_n$ of the curves in $\C$ determined by
an equation
\begin{equation}
\label{eqn:pi-EE-n}
x-\log r_2(x,y)=-c_1.
\end{equation}
As $c_2$ varies in (\ref{eqn:n-leaf}), we obtain the leaves of
$\FF_n$ within a single leaf of~$\EE_n$.

The following proposition says that, up to a $C^1$-diffeomorphism,
the foliation $\EE_n$ looks homogeneous.

\begin{prop}
There is a $C^1$-diffeomorphism $\tilde{\sigma}$ of $S^3$, fixed along
$S^1\times\{0\}$ and of class $C^{\infty}$ on $S^3_0$, which sends
$\EE_n$ to the $2$-dimensional foliation of $S^3$ with a singular leaf
$S^1\times\{0\}$, and all $2$-dimensional leaves of the form
$\pi_n^{-1}(\{ x=\mathrm{const.}\})$. In other words, $\tilde{\sigma}(\EE_n)$
is the preimage under $\pi_n$ of the standard foliation of
$\hat{\C}$ with a singular point of index~$2$ at~$\infty$.
\end{prop}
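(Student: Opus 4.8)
We want a diffeomorphism $\tilde{\sigma}$ of $S^3$ that straightens out the foliation $\EE_n$. The leaves of $\EE_n$ on $S^3_0$ are the preimages under $\pi_n$ of the curves in $\C$ given by equation (\ref{eqn:pi-EE-n}), namely $x-\log r_2(x,y)=-c_1$. The target foliation has leaves $\pi_n^{-1}(\{x=\mathrm{const.}\})$. So the whole problem reduces to the base: I need to find a diffeomorphism $\sigma$ of $\hat{\C}$ which sends each level curve $\{x-\log r_2(x,y)=-c_1\}$ to a vertical line $\{x=\mathrm{const.}\}$, fixes $\infty$, and lifts (via $\pi_n$) to a map of $S^3$ with the required regularity.

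**The plan.** The plan is to define $\sigma$ directly on $\C$ by a map of the form $\sigma(x,y)=(F(x,y),y)$, where $F(x,y):=x-\log r_2(x,y)$ is precisely the left-hand side of (\ref{eqn:pi-EE-n}). By construction $\sigma$ sends each leaf curve $\{F=-c_1\}$ to the vertical line $\{x=-c_1\}$, leaving the $y$-coordinate untouched. First I would check that $\sigma$ is a diffeomorphism of $\C$: its Jacobian is lower triangular with diagonal entries $\partial F/\partial x$ and $1$, and by the estimate (\ref{eqn:r2-x}) we have $\partial F/\partial x=1-(\partial_x r_2)/r_2\in[1/2,3/2]$, so $\partial F/\partial x>0$ everywhere; hence for each fixed $y$ the map $x\mapsto F(x,y)$ is a strictly increasing smooth function. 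To see it is onto I would argue that $F(x,y)\to\pm\infty$ as $x\to\pm\infty$ (using $r_2\to 0$ as $x\to+\infty$ from (\ref{eqn:r2}), so $-\log r_2\to+\infty$, while $r_2$ stays bounded below as $x\to-\infty$), so $\sigma$ restricts to a diffeomorphism $\R\to\R$ on each horizontal slice and is therefore a smooth diffeomorphism of $\C$.

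**Lifting to $S^3$ and regularity.** Next I would lift $\sigma$ to a diffeomorphism $\tilde{\sigma}$ of $S^3_0$. Since $\pi_n$ is a (Seifert) fibration and $\psi_t$ acts on fibres by translation in $\theta_2$, the natural lift keeps $\theta_2$ fixed and moves the base point by $\sigma$; explicitly, in the coordinates $(x,y,\theta_2)$ on $S^3_0$ I would set $\tilde{\sigma}(x,y,\theta_2)=(\sigma(x,y),\theta_2)=(F(x,y),y,\theta_2)$, reading these as coordinates on the same $S^3_0$. This is a smooth diffeomorphism of $S^3_0$ because $\sigma$ is, and it carries leaves of $\EE_n$ (the preimages of $\{F=\mathrm{const.}\}$) to leaves of the target foliation (preimages of $\{x=\mathrm{const.}\}$), since the defining equations are interchanged by $\sigma$.

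**The main obstacle.** The hard part will be extending $\tilde{\sigma}$ across the Hopf circle $S^1\times\{0\}=\pi_n^{-1}(\infty)$ and verifying that the extension is a global $C^1$-diffeomorphism of $S^3$, even though it is only claimed to be $C^\infty$ on $S^3_0$. The difficulty is that $\log r_2\to-\infty$ as one approaches the Hopf circle (where $r_2\to 1$... more precisely one must analyse $F$ near $\infty\in\hat\C$), so I must understand the behaviour of $\sigma$ near $\infty$ and confirm the extension-by-identity-on-$S^1\times\{0\}$ is $C^1$. Concretely I would examine $F(x,y)-x=-\log r_2(x,y)$ as $x^2+y^2\to\infty$: from (\ref{eqn:r2}) one has $r_2\to 0$ and $-\log r_2\sim\tfrac{1}{2n}\log(x^2+y^2)$, a term that grows only logarithmically and whose first derivatives decay (as (\ref{eqn:r2-x}) already suggests for $\partial_x r_2$, with an analogous bound for $\partial_y r_2$). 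I expect this to show that in the chart near $\infty$ the correction $\sigma-\mathrm{id}$ and its first derivatives extend continuously to $0$ at $\infty$, giving a $C^1$ match with the identity across $S^1\times\{0\}$, while the logarithmic term obstructs $C^2$ regularity — which is exactly why the statement only claims a $C^1$-diffeomorphism. I would carry out this boundary analysis carefully, estimating $\partial r_2/\partial y$ and the growth of $F$ in a punctured neighbourhood of $\infty$, as the crux of the proof; the interior argument on $S^3_0$ is then routine from the construction above.
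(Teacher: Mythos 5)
Your base diffeomorphism $\sigma(x+\rmi y)=x-\log r_2(x,y)+\rmi y$ is exactly the one used in the paper, and your verification that it is a diffeomorphism of $\C$ straightening the curves (\ref{eqn:pi-EE-n}) is fine (modulo the small slip that $r_2\to 0$ as $x\to-\infty$ as well; surjectivity in $x$ follows instead from $\partial F/\partial x\geq 1/2$). The genuine gap is in the lift. You locate the difficulty in the regularity of $\sigma$ at $\infty$, but that part works out: in the chart $w=1/z$ one finds $w\circ\sigma\circ w^{-1}=w+\frac{1}{n}w^2\log|w|+O(|w|^{2+2/n})$, so $\sigma$ is $C^1$ at $\infty$ with differential admitting $|w|\log|w|$ as a modulus of continuity. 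The real problem is that your specific lift, $\tilde{\sigma}(x,y,\theta_2)=(F(x,y),y,\theta_2)$, i.e.\ the lift fixing $\theta_2$, is not $C^1$ along $S^1\times\{0\}$ when $n=1$. In the coordinates $(\rme^{\rmi\theta_1},z_2)$ on $S^3_{\infty}$ near the Hopf circle, your map rotates the longitude: $\theta_1\mapsto\theta_1+\delta$ with $\delta=\arg\sigma(z)-\arg z$. Since $\log r_2$ is real, $\delta\approx-\frac{1}{n}\im(w)\log|w|$ with $w=\pi_n\approx z_2^n\rme^{-\rmi\theta_1}$, so $\delta=O(|z_2|^n\log|z_2|)$ while $\partial\delta/\partial z_2=O(|z_2|^{n-1}\log|z_2|)$. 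For $n=1$ the difference quotient of $\theta_1+\delta$ in the $z_2$-direction at $z_2=0$ is of order $\log|z_2|$ and diverges, so this lift is not even differentiable along the closed leaf: a map that is $C^1$-tangent to the identity downstairs need not lift to a $C^1$ map upstairs, because the fibre coordinate $\theta_1$ enters the base coordinate $w$.

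The paper's remedy is to choose a different lift near the Hopf circle: keep $\theta_1$ fixed and put the entire correction into the transverse coordinate, $z_2\mapsto\psi^{-1}\bigl(\psi(z_2)\,\mu(w)\bigr)$ with $\mu(w)=(1-w\log r_2)^{-1/n}$. The correction factor $\mu-1=O(|w|\log|w|)$ is itself only continuous, but it is multiplied by $z_2$, which supplies the extra factor needed to make this lift $C^1$ for every $n$, with first derivative admitting $|z_2|\log|z_2|$ as a modulus of continuity. One must then interpolate between this lift over a neighbourhood of $\infty$ and a lift of the form $(z,\rme^{\rmi\theta_2})\mapsto(\sigma(z),\rme^{\rmi(\theta_2+f)})$ over the rest of the base, using that the condition $\partial f/\partial\theta_2>-1$ guaranteeing a fibrewise diffeomorphism is convex. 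So the missing idea is the correct choice of lift (and the gluing of the two descriptions), not a sharper estimate on $\sigma$.
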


\begin{proof}
We first construct a $C^1$-diffeomorphism $\sigma$ of $\hat{\C}$
that brings the foliation $\pi_n(\EE_n)$ given by (\ref{eqn:pi-EE-n})
into standard form. Set
\[ \sigma(z)=x-\log r_2(x,y)+\rmi y\;\;\text{for $z=x+\rmi y\in\C$},\;\;\;
\sigma(\infty)=\infty.\]
From the estimate (\ref{eqn:r2-x}) and the comment following it we see
that $\sigma$ maps $\C$ diffeomorphically onto itself, and it
obviously `linearises' the foliation of~$\hat{\C}$. Notice that
$\sigma(0)=0$.

To examine the differentiability of $\sigma$ near~$\infty$, we use the
coordinate $w$ on $\hat{\C}\setminus\{0\}=\C^{*}\cup\{\infty\}$ given by
$w(z)=1/z$ for $z\in\C^*$ and $w(\infty)=0$. From the implicit
definition of $r_2(z)=r_2(x,y)$ in (\ref{eqn:r2}) we have
\[ r_2^{2n}=\frac{1-r_2^2}{|z|^2}<\frac{1}{|z|^2}.\]
Feeding this estimate back into the defining equation, we obtain
\[ \frac{1-|z|^{-2/n}}{|z|^2}<r_2^{2n}<\frac{1}{|z|^2}.\]
This gives us the growth estimate
\[ \log r_2(z)=-\frac{1}{n}\log|z|+O(|z|^{-2/n})=
\frac{1}{n}\log|w|+O(|w|^{2/n})\;\;\;\text{for $w\rightarrow 0$}.\]
A straightforward calculation yields
\[ \frac{1}{\sigma(z)}=w+\frac{1}{n}w^2\log|w|+
O(|w|^{2+\frac{2}{n}})\;\;\;\text{for $w\rightarrow 0$},\]
and a similar estimate for the differential of~$\sigma$.
This means that $\sigma$ is $C^1$ near $w=0$, and its
differential admits $|w|\log |w|$ as a modulus of continuity.

Next we want to construct the diffeomorphism $\tilde{\sigma}$
of $S^3$ as a lift of $\sigma$, that is, $\tilde{\sigma}$ should satisfy the
equation $\pi_n\circ\tilde{\sigma}=\sigma\circ\pi_n$.
For this construction we use explicit coordinates on $S^3_0$ and
\[ S^3_{\infty}:=S^3\setminus\pi_n^{-1}(0)=S^3\setminus
\bigl(\{0\}\times S^1\bigr).\]
For $S^3_0$, we use the parametrisation from the proof of
Proposition~\ref{prop:n-asymptotic}:
\[ \begin{array}{rccc}
\phi_0\co & \C\times S^1            & \longrightarrow
    & S^3_0\\[1mm]
          & (z,\rme^{\rmi\theta_2}) & \longmapsto
    & (zr_2^n(z)\rme^{\rmi n\theta_2},r_2(z)\rme^{\rmi\theta_2}),
\end{array}\]
with inverse diffeomorphism given by
\[ \phi_0^{-1}\co (z_1,z_2)\longmapsto
\Bigl(\frac{z_1}{z_2^n},\frac{z_2}{|z_2|}\Bigr).\]
For the parametrisation of $S^3_{\infty}$, it is convenient to replace
$\C$ by the open unit disc $\D\subset\C$. We then define a diffeomorphism
\[ \begin{array}{rccc}
\phi_{\infty}\co & S^1\times\D               & \longrightarrow
    & S^3_{\infty}\\[1mm]
                 & (\rme^{\rmi\theta_1},z_2) & \longmapsto
    & \bigl(\sqrt{1-|z_2|^2}\,\rme^{\rmi\theta_1},z_2\bigr),
\end{array} \]
with inverse map
\[ \phi_{\infty}^{-1}\co (z_1,z_2)\longmapsto
\Bigl(\frac{z_1}{|z_1|},z_2\Bigr).\]

We first construct the lift $\tilde{\sigma}$ near $S^1\times\{0\}$,
i.e.\ near the point $\infty$ (or $w=0$) in the base. From the
growth estimate for $\log r_2$ we have near $w=0$ a well-defined
complex-valued function
\[ \mu(w):=\frac{1}{\sqrt[n]{1-w\log r_2(z)}} \]
with $\arg\mu$ close to zero, and this function admits
$|w|\log|w|$ as modulus of continuity.
For points $p\in\hat{\C}$ near $\infty$ we have
\[ w(\sigma(p))=\frac{1}{z(p)-\log r_2(z(p))}=
\frac{w(p)}{1-w(p)\log r_2(z(p))},\]
hence
\[ w(\sigma(p))=w(p)\cdot\mu(w(p))^n.\]

By slight abuse of notation, we now suppress the
parametrisations, i.e.\ we think of $\pi_n|_{S^3_{\infty}}$
as a map $S^1\times\D\rightarrow\C$, and of $\sigma$ as the
germ of a map $(\D,0)\rightarrow (\D,0)$. Then
\[ \pi_n(\rme^{\rmi\theta_1},z_2)=\frac{z_2^n}{\sqrt{1-|z_2|^2}}
\rme^{-\rmi\theta_1}=\psi(z_2)^n\rme^{-\rmi\theta_1},\]
where $\psi\co\D\rightarrow\C$ is the diffeomorphism
\[ \psi\co z\longmapsto\frac{z}{(1-|z|^2)^{1/2n}},\]
and
\[ \sigma\circ\pi_n(\rme^{\rmi\theta_1},z_2)=\psi(z_2)^n\rme^{-\rmi\theta_1}\,
\bigl(\mu(\psi(z_2)^n\rme^{-\rmi\theta_1})\bigr)^n.\]
Thus, in order to obtain a commutative diagram
\[
\begin{CD}
S^1\times\D   @>\tilde{\sigma}>>   S^1\times\D\\
@V\pi_nVV                          @VV\pi_nV\\
\C            @>\sigma>>           \C
\end{CD}
\]
with a map $\tilde{\sigma}$ defined near $S^1\times\{0\}\subset S^1\times\D$,
we can simply set
\[ \tilde{\sigma}(\rme^{\rmi\theta_1},z_2):=(\rme^{\rmi\theta_1},
\tilde{z}_2)\]
with
\[ \tilde{z}_2:=\psi^{-1}\bigl(\psi(z_2)\cdot\mu(\psi(z_2)^n
\rme^{-\rmi\theta_1})\bigr).\]
Notice that $\tilde{\sigma}$ fixes $S^1\times\{0\}$ pointwise.
Given the continuity properties of $\mu$ near $w=0$, and
the fact that the diffeomorphism $\psi$ goes like $z_2$ near $z_2=0$,
we see that $\tilde{\sigma}$ is $C^1$ at $z_2=0$, with first derivative
admitting $|z_2|\log|z_2|$ as modulus of continuity; outside
$z_2=0$ the local diffeomorphism $\tilde{\sigma}$ is smooth.

\begin{rem}
In fact one can show that $\tilde{\sigma}$ (for
a given $n$) has derivatives up to order~$n$, and the
$n^{\mathrm{th}}$ derivative admits $|z_2|\log|z_2|$ as modulus of
continuity. Since $\psi$ is a diffeomorphism, the regularity
of $\tilde{z}_2$ as a function of $z_2$ and $\theta_1$ is the same
as that of
$(\zeta,\theta)\mapsto \zeta\cdot\mu(\zeta^n\rme^{-\rmi\theta})$.
By a more careful growth estimate for $\log r_2(z)$, one obtains
the claimed result.
\end{rem}

Next we wish to construct the lift $\tilde{\sigma}$ on $S^3_0$,
that is, over $\C\subset\hat{\C}$ in the base, making sure that it
coincides with the previous construction near~$\infty$.
Again we work in coordinates, so we want to construct $\tilde{\sigma}$
such that the diagram
\[
\begin{CD}
\C\times S^1   @>\tilde{\sigma}>>   \C\times S^1\\
@V\pi_nVV                           @VV\pi_nV\\
\C             @>\sigma>>           \C
\end{CD}
\]
commutes. From the definition of $\phi_0$ we see that in this
diagram the vertical map $\pi_n$ is simply the projection onto the
first factor, so $\tilde{\sigma}$ must be of the form
\begin{equation}
\label{eqn:tilde-sigma}
\tilde{\sigma}(z,\rme^{\rmi\theta_2})=
(\sigma (z),\rme^{\rmi\tilde{\theta}_2})
\end{equation}
for a suitable function $\tilde{\theta}_2(z,\theta_2)$.

The composition
\[ S^1\times(\D\setminus\{0\})\stackrel{\phi_{\infty}}{\longrightarrow}
S^3_{\infty}\cap S^3_0 \stackrel{\phi_0^{-1}}{\longrightarrow}
(\C\setminus\{0\})\times S^1\]
is given in the second factor by $z_2\mapsto z_2/|z_2|$. It follows
that near $z=\infty$, the function $\tilde{\theta}_2$ must be given by
$\arg\tilde{z}_2$. Since the diffeomorphism $\psi$ preserves
the argument, this gives
\[ \tilde{\theta}_2(z,\theta_2)=\theta_2+\arg\bigl(\mu(\psi(z_2)^n
\rme^{-\rmi\theta_1}) \bigr),\]
where $(\rme^{\rmi\theta_1},z_2)=
\phi_{\infty}^{-1}\circ\phi_0(z,\rme^{\rmi\theta_2})$,
so we can write this as
\[ \tilde{\theta}_2(z,\theta_2)=\theta_2+f(z,\theta_2).\]

Our previous definition of the lift $\tilde{\sigma}$ near $z=\infty$
means that there $f$ is given, and it takes values close to zero.
From the coordinate description of $\tilde{\sigma}$
in (\ref{eqn:tilde-sigma}) and with $|f|$ small we see that
$\tilde{\sigma}$ maps the $S^1$-fibre over $z$ diffeomorphically
with degree~$1$ onto the fibre over $\sigma(z)$, which
necessitates $\partial f/\partial\theta_2>-1$. This is
a convex condition, so the $f$ given near $z=\infty$ can be extended
smoothly over $\C$ subject to this condition. This completes the
construction of the lift~$\tilde{\sigma}$.
\end{proof}

According to this proposition, when each of the foliations $\EE_n$
is viewed relative to the Seifert fibration~$\pi_n$, these
foliations look the same for all~$n$. In other words, the topology
of the foliation is essentially encoded in the Seifert fibration.

An alternative and more intrinsic way to understand the topology of
$\FF_n$ and $\EE_n$ is to consider surfaces of section.

\begin{prop}
For each $n\in\N$, the $2$-disc $\{\theta_2=\mathrm{const.},\,
r_1<1\}$ with
boundary the closed leaf $S^1\times\{0\}$ is a global surface of
section for the foliation $\FF_n$.
\end{prop}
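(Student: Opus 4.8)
The plan is to read everything off the explicit description of the leaves of $\FF_n$ in the coordinates $(x,y,\theta_2)$ on $S^3_0$ from the proof of Proposition~\ref{prop:n-asymptotic}. Fix the constant value $\theta_2^0$ defining the disc $D$. First I would identify $\Int(D)$: the interior $\{\theta_2=\theta_2^0,\ r_1<1\}$ is exactly the slice $\{\arg z_2=\theta_2^0\}$ of $S^3_0$ (here $r_1<1$ forces $r_2>0$, so the angle is defined), which in the parametrisation $\phi_0$ is the single level set $\{\theta_2=\theta_2^0\}$, an embedded open disc. The coordinates $\phi_0$ stay valid at $r_1=0$, so the one interior point $(0,\rme^{\rmi\theta_2^0})$ on the other Hopf circle is a harmless regular point. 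The closure of this disc has boundary the Hopf circle $S^1\times\{0\}$, which by Proposition~\ref{prop:n-asymptotic} is the unique closed leaf; thus the boundary clause in the definition of a surface of section holds automatically.

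Next I would establish transversality. By the second equation of~(\ref{eqn:n-leaf}), along each non-closed leaf one has $\theta_2=y+c_2$, where $y\in\R$ is the global parameter presenting the leaf as a graph $y\mapsto\bigl(x(y),y,\theta_2=y+c_2\bigr)$. Hence the tangent to every leaf has $\theta_2$-component identically~$1$, so $\rmd\theta_2$ is nowhere zero along leaves. This is precisely the statement that each leaf meets every level set $\{\theta_2=\mathrm{const.}\}$, and in particular $\Int(D)$, transversally, with no interior tangencies.

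It then remains to verify the global section property: every leaf other than $S^1\times\{0\}$ crosses $\Int(D)$ in both forward and backward directions, with a well-defined return. Since $\theta_2=y+c_2$ and $y$ ranges over all of $\R$, the angle $\theta_2$ is strictly monotonic and unbounded in both directions along the leaf, so it attains the value $\theta_2^0$ modulo $2\pi$ exactly at the parameters $y=\theta_2^0-c_2+2\pi k$, $k\in\Z$, giving infinitely many transverse crossings of $D$. As $\FF_n$ is generated by a nowhere-zero vector field on the compact manifold $S^3$, the flow is complete, and the monotone unbounded behaviour of $\theta_2$ forces the first-return time to $\Int(D)$ to be finite at every point. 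Hence $D$ is a global surface of section.

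The one point requiring care, and thus the main obstacle, is the degeneration at the boundary: transversality fails along $\partial D=S^1\times\{0\}$ because the boundary is itself a leaf, so I must rule out orbits being trapped in a neighbourhood of $S^1\times\{0\}$ without returning. This is exactly what the asymptotics of Proposition~\ref{prop:n-asymptotic} supply: as $|y|$ decreases from $\infty$ every leaf leaves each neighbourhood of $S^1\times\{0\}$ while $\theta_2$ sweeps through all residues mod $2\pi$, so the return map is defined on all of $\Int(D)$ despite the tangency at $\partial D$.
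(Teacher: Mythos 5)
Your proof is correct, but it takes a different route from the paper's for the key transversality step, and it is actually more complete on one point. The paper argues transversality by a direct differential-form computation: it wedges the two $1$-forms $\rmd(|z_1|^2+|z_2|^2)$ and $z_2\,\rmd\oz_2-\oz_2\,\rmd z_2=-2\rmi r_2^2\,\rmd\theta_2$ with $\omega_n\wedge\oomega_n$ and checks that the resulting $4$-form is the standard volume form times the factor $n|z_1|^2|z_2|^2+|z_2|^4+|z_2|^2\re(z_1\oz_2^n)$, which is positive off $S^1\times\{0\}$; the vanishing of the common kernel of these four $1$-forms is exactly the transversality of the foliation direction to the disc. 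You instead read transversality off the explicit leaf parametrisation $y\mapsto(x(y),y,\theta_2=y+c_2)$ from the proof of Proposition~\ref{prop:n-asymptotic}: since $\rmd\theta_2$ evaluates to $1$ on the leaf tangent, no interior tangency can occur. Both arguments are elementary and rest on the same earlier analysis; the paper's has the advantage of working directly with $\omega_n$ and not presupposing the leaf description, while yours makes the global return property transparent. Indeed, the paper's printed proof establishes only transversality and leaves the recurrence (every non-closed leaf meets the disc again in finite forward and backward time) implicit in Proposition~\ref{prop:n-asymptotic}, whereas you spell it out via the monotone unbounded behaviour of $\theta_2=y+c_2$ and the compactness of the leaf arc between consecutive crossings. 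Your closing remark about the degenerate behaviour at the boundary leaf is also a point the paper does not address explicitly. No gaps.
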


\begin{proof}
We have
\[ z_1\,\rmd\oz_1+\oz_1\,\rmd z_1+z_2\,\rmd\oz_2+\oz_2\,\rmd z_2=
2(x_1\,\rmd x_1+y_1\,\rmd y_1+x_2\,\rmd x_2+ y_2\,\rmd y_2)\]
and
\[ z_2\,\rmd\oz_2-\oz_2\,\rmd z_2=-2\rmi r_2^2\,\rmd\theta_2.\]
The wedge product of these two $1$-forms with $\omega_n\wedge\oomega_n$
is a volume form on $\C^2$ multiplied by a factor
\[ n|z_1|^2|z_2|^2+|z_2|^4+|z_2|^2\re(z_1\oz_2^n),\]
which is positive on $S^3\setminus\bigl(S^1\times\{0\}\bigr)$.
This means that $\ker\omega_n$ is transverse to the disc
$\{\theta_2=\mathrm{const.},\, r_1<1\}$.
\end{proof}

More interesting is the behaviour of $\FF_n$ near the closed
leaf $S^1\times\{0\}$, so we now consider the discs
$\{\theta_1=\mathrm{const.}\}$. These discs are surfaces
of section near $r_2=0$, that is, near the closed leaf.
For the concept of Leau--Fatou flower used in the next
proposition see~\cite[\S 10]{miln06}.

\begin{prop}
\label{prop:LF}
The Poincar\'e return map of $\FF_n$ on the disc
$\{\theta_1=\mathrm{const.},\, r_2<1\}$ near the central fixed point has a
Leau--Fatou flower with $n$ attracting petals.
\end{prop}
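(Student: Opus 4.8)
The plan is to compute the return map explicitly in the local model $S^1\times\C$ introduced in the proof of Proposition~\ref{prop:log-n}. Near the closed leaf $S^1\times\{0\}$ the disc $\{\theta_1=\mathrm{const.}\}$ is the transversal $\{z_1=\mathrm{const.}\}$, parametrised by $z=z_2$, and the Poincar\'e return map $P$ is obtained by following the flow of $\FF_n$ as $\theta_1$ runs through one period $[0,2\pi]$. By the preceding proposition these discs are sections near $r_2=0$, so $P$ is a well-defined germ of a holomorphic map $(\C,0)\to(\C,0)$ fixing the central point; it is nothing but the monodromy map $\varphi_{2\pi}$ underlying the logarithmic monodromy of Proposition~\ref{prop:log-n}.

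The main tool I would use is the holomorphic first integral behind~(\ref{eqn:n-complex-leaf}): on $\C\times\C^*$ the function $F=\log z-\rme^{\rmi\theta_1}/z^n$ is constant along the leaves of~$\FF_n$. Writing $\zeta=\log z$ for the branch obtained by continuation along a flow line, placing the transversal at $\theta_1=0$, and using $\rme^{2\pi\rmi}=1$, the conservation of $F$ over one period becomes the implicit relation
\[ \zeta_1-\rme^{-n\zeta_1}=\zeta_0-\rme^{-n\zeta_0}, \]
where $\zeta_0=\log z_0$ and $\zeta_1=\log P(z_0)$, the continuation advancing $\zeta$ by $2\pi\rmi/n$.

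Next I would solve this perturbatively for small $z_0$. Setting $\zeta_1=\zeta_0+2\pi\rmi/n+\delta$ and $s=z_0^n$ turns the relation into $s\,(2\pi\rmi/n+\delta)=\rme^{-n\delta}-1$, which by the implicit function theorem has a unique small solution $\delta=\delta(s)$, analytic in $s$ with $\delta(s)=-\tfrac{2\pi\rmi}{n^2}\,s+O(s^2)$. Since $\delta$ depends only on $s=z_0^n$, the map $P$ contains only the resonant powers $z^{kn+1}$, and
\[ P(z)=\rme^{2\pi\rmi/n}z-\frac{2\pi\rmi}{n^2}\,\rme^{2\pi\rmi/n}\,z^{n+1}+O(z^{2n+1}); \]
in particular $P'(0)=\rme^{2\pi\rmi/n}$, in agreement with the value $2\pi\rmi/n$ found in Proposition~\ref{prop:log-n}.

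Finally, since the multiplier $\lambda=\rme^{2\pi\rmi/n}$ is a primitive $n$-th root of unity, I would pass to the $n$-th iterate. A short induction using the resonant form above gives
\[ P^n(z)=z-\frac{2\pi\rmi}{n}\,z^{n+1}+O(z^{n+2}), \]
whose leading nonlinear coefficient is nonzero. The Leau--Fatou flower theorem \cite[\S 10]{miln06} then produces exactly $n$ attracting and $n$ repelling petals for $P^n$, cyclically permuted by $P$, which is the asserted flower with $n$ attracting petals. The delicate point, I expect, is the bookkeeping of the logarithm in the first integral---verifying that continuation along the flow advances $\zeta$ by precisely $2\pi\rmi/n$ and that the implicit function theorem applies uniformly as $z_0\to0$; once $P$ is in resonant normal form the iterate computation and the appeal to the flower theorem are routine.
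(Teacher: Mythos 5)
Your proposal is correct, but it takes a genuinely different route from the paper. The paper's argument is qualitative: it introduces the auxiliary $2$-dimensional foliation $\EE_n$, observes via (\ref{eqn:pi-EE-n}) that $\Delta\cap\EE_n$ traces out loops through the centre of the disc that are permuted by the rotation $\psi_{2\pi/n}$, and reads off the $n$ attracting sectors from the way the leaves of $\FF_n$ run through these loops; no formula for the return map is ever written down. You instead compute the holonomy germ explicitly from the first integral $\log z_2-z_1/z_2^n$ and invoke the parabolic case of the Leau--Fatou theorem for $P^{\circ n}(z)=z-\frac{2\pi\rmi}{n}\,z^{n+1}+\cdots$. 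Your computation checks out: the implicit function theorem applies because $\partial_\delta(\rme^{-n\delta}-1)\bigr|_{\delta=0}=-n\neq 0$, the resonant form $P(z)=\rme^{2\pi\rmi/n}z\,\rme^{\delta(z^n)}$ makes the iteration clean, and the leading coefficient of $P^{\circ n}$ is indeed $-2\pi\rmi/n\neq 0$. Your approach buys strictly more than the paper's, namely the explicit normal form of the return map (in fact to order $2n+1$, since $P(z)/(\rme^{2\pi\rmi/n}z)$ is a power series in $z^n$), while the paper's approach buys a visualisable picture of the petals via Figures \ref{figure:LF-1} and~\ref{figure:LF-3}.

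One point should be made explicit. You compute on the model $S^1\times\C$, i.e.\ on the hypersurface $\{|z_1|=1\}$ with the holomorphic transversal $\{z_1=1\}$, whereas the proposition concerns the disc $\Delta=\{\theta_1=\mathrm{const.}\}\cap S^3$, on which $z_2$ is \emph{not} a holomorphic coordinate for the transverse structure away from the origin (the restriction of $\omega_n$ to $\Delta$ acquires a $\rmd\oz_2$-component of order $|z_2|^2$ through the factor $r_1=\sqrt{1-|z_2|^2}$). For the logarithmic monodromy of Proposition~\ref{prop:log-n} only the $1$-jet matters and the substitution is harmless; here the flower depends on the $(n+1)$-jet. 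The gap is easily closed: both $S^3$ and $\{|z_1|=1\}$ are transverse near $S^1\times\{0\}$ to the complex foliation $\CC_n=\ker\omega_n$, so both return maps represent the holonomy of $\CC_n$ along that closed leaf on two different transversals and are therefore holomorphically conjugate germs, and the number of attracting petals is a conjugation invariant. With that remark added, your argument is complete.
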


\begin{proof}
Without loss of generality, we consider the disc $\Delta:=\{\theta_1=0,\,
r_2<1\}$,
on which we take $(r_2,\theta_2)$ as polar coordinates. The Seifert
fibres of $\pi_n$ are transverse to~$\Delta$,
hence so is the flow $\psi_t$, which implies that the leaves of
$\EE_n$ are likewise transverse. From (\ref{eqn:pi-EE-n}) we see
that the intersection of $\EE_n$ with $\Delta$ is given by
curves of the form
\[ \cos(n\theta_2)=\frac{r_2^n}{\sqrt{1-r_2^2}}(\log r_2-c_1)\]
for varying values of~$c_1$. These are shown in Figures \ref{figure:LF-1}
and \ref{figure:LF-3} for $n=1$ and $n=3$, respectively.
The centre of $\Delta$ is the intersection point with
the closed leaf $S^1\times\{0\}$ of $\FF_n$.

\begin{figure}[ht]
\centering
\includegraphics[scale=0.45]{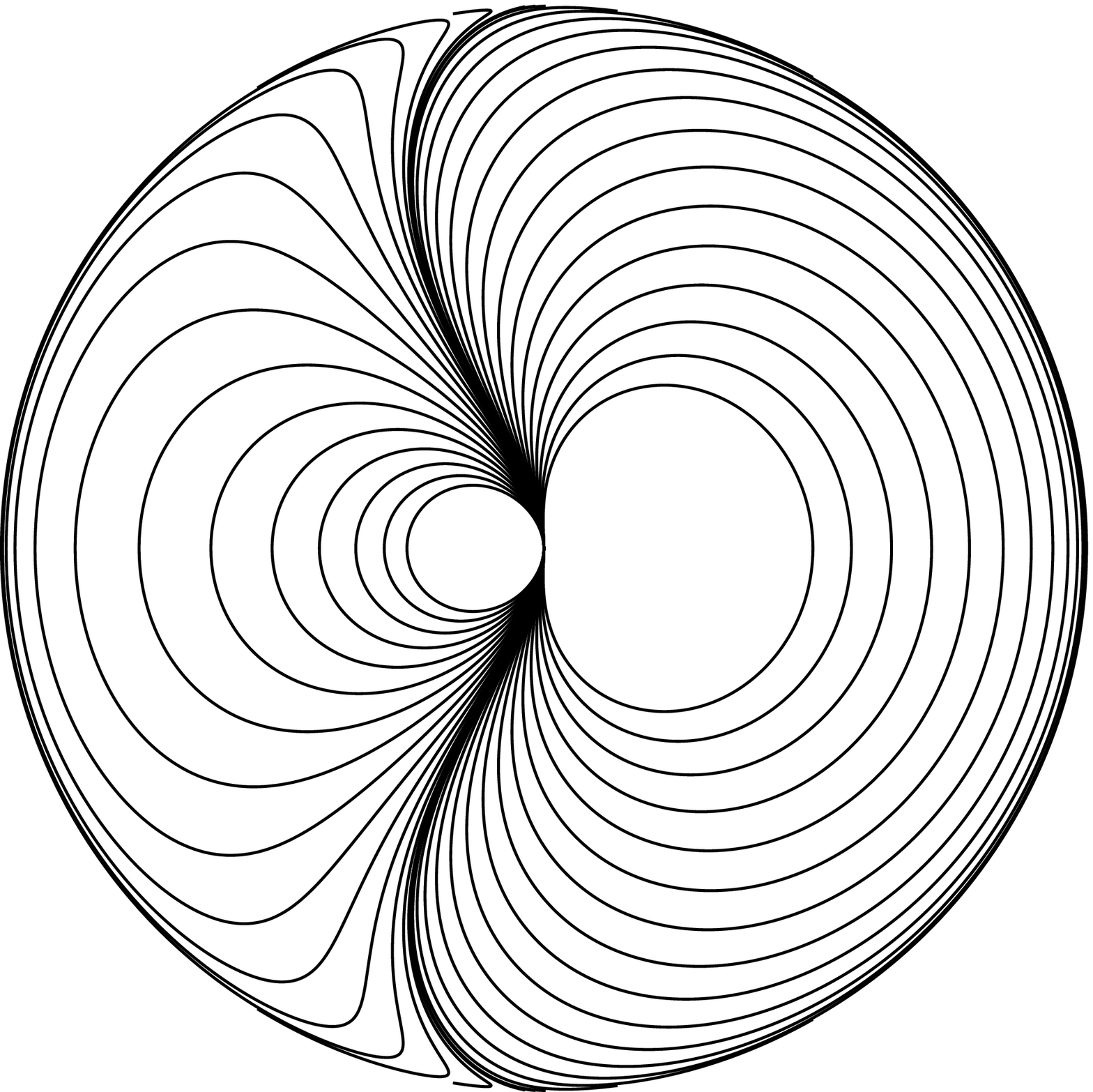}
  \caption{The foliation $\Delta\cap\EE_1$.}
  \label{figure:LF-1}
\end{figure}

\begin{figure}[ht]
\centering
\includegraphics[scale=0.45]{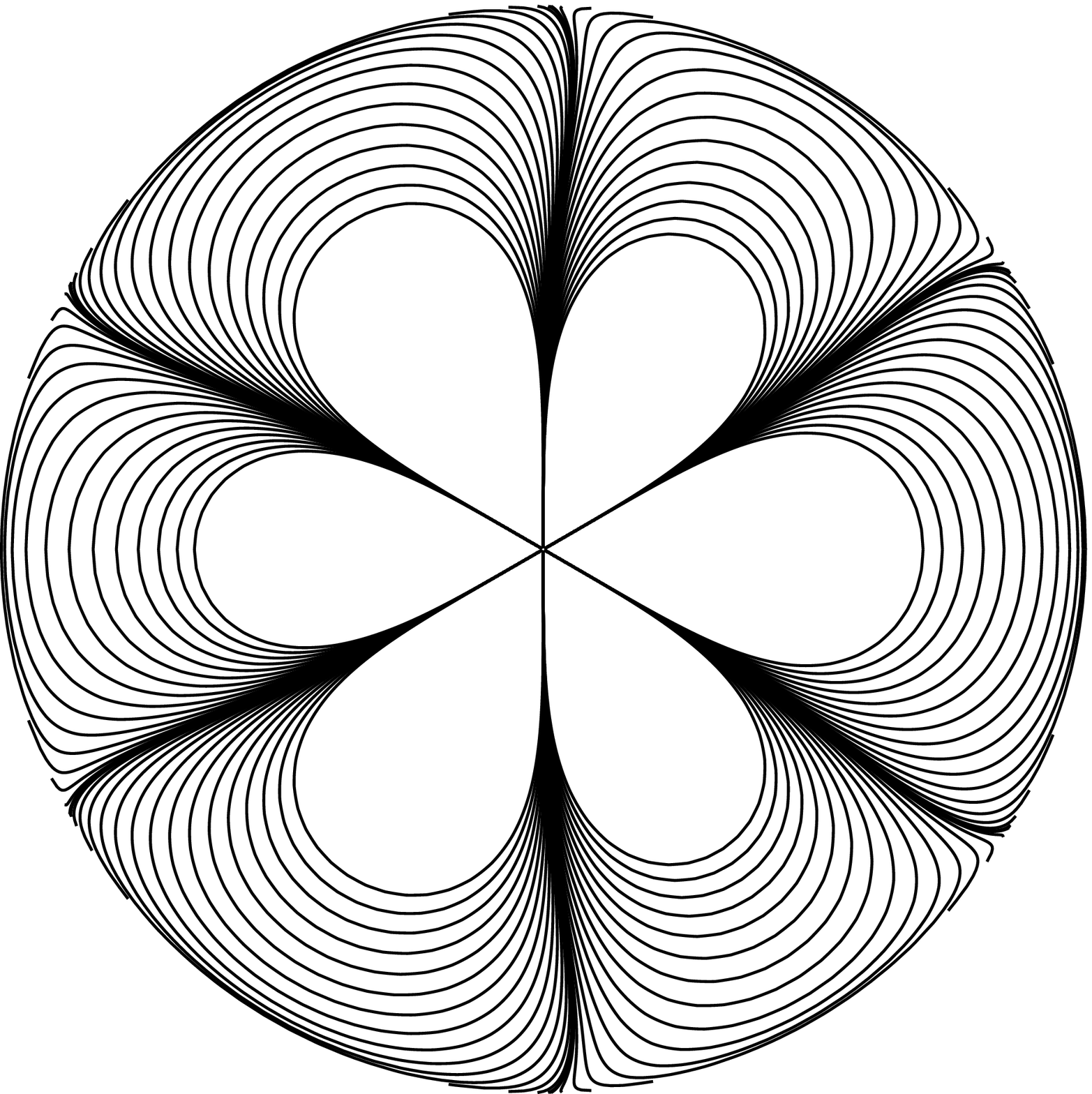}
  \caption{The foliation $\Delta\cap\EE_3$.}
  \label{figure:LF-3}
\end{figure}

The return time for any point $p\in\Delta$ under the flow
$\psi_t$ is $t=2\pi/n$, and we have
\[ \psi_{2\pi/n}(r_2,\theta_2)=\Bigl(r_2,\theta_2+\frac{2\pi}{n}\Bigr).\]
Hence, in the picture for $n=1$, each loop (without the
central point) corresponds to the
intersection of $\Delta$ with a single leaf of~$\EE_n$; in the case
$n=3$, each cylindrical leaf $\R\times S^1$of $\EE_3$ cuts $\Delta$ in three
open loops (corresponding to the $\R$-factor) obtained from one another
by rotation through $2\pi/3$.

Each leaf of $\FF_n$ is contained in a leaf of~$\EE_n$. As we saw earlier,
the non-closed leaves of $\FF_n$ have infinite variation in
$\theta_1$-direction, and they approach $S^1\times\{0\}$ in forward and
backward time. Near the centre of~$\Delta$, where $\FF_n$
is transverse to~$\Delta$, each leaf of $\FF_n$ meets
$n$ loops of $\Delta\cap\EE_n$ in cyclic order, and in each loop
the intersection points move from one end to the other with time.
In adjacent loops, these intersection points move in
opposite direction. This means that there are open sectors of width
$2\pi/n$ where the intersection points approach the origin along
the central direction of the sector, so we have a Leau--Fatou flower
with $n$ attracting petals in the terminology of~\cite[\S 10]{miln06};
correspondingly, there are $n$ repelling petals.
\end{proof}

\begin{rem}
The results in this section show that even the simple Poincar\'e
foliations on the $3$-sphere give rise to interesting dynamical
patterns. For a more wide-ranging analysis of transversely
holomorphic foliations of codimension~$1$ from a dynamical
point of view see \cite[Chapter~6]{asuk10} and~\cite{ggs01}.
\end{rem}

We end the discussion of the topology of the foliations $\FF_n$
with the following branched cover description.

\begin{prop}
There is an $n$-fold branched cover $S^3\rightarrow S^3$, branched along
$S^1\times\{0\}$, that pulls back $\FF_1$ to~$\FF_n$.
\end{prop}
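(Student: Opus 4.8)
The plan is to exhibit an explicit $n$-fold branched cover and check directly that it intertwines the two foliations. Recall that $\omega_1=z_1\,\rmd z_2-z_2\,\rmd z_1+z_2\,\rmd z_2=(z_1+z_2)\,\rmd z_2-z_2\,\rmd z_1$, while $\omega_n=nz_1\,\rmd z_2-z_2\,\rmd z_1+z_2^n\,\rmd z_2$. The natural candidate for the branched cover is the map
\[ p_n\co(z_1,z_2)\longmapsto\Bigl(\tfrac{1}{n}z_1,z_2\Bigr)\quad\text{followed by}\quad (w_1,w_2)\longmapsto(w_1,w_2),\]
but the genuinely useful map is one that raises $z_2$ to the power dividing out the resonance. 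Concretely, I would consider
\[ p\co S^3\longrightarrow S^3,\qquad (z_1,z_2)\longmapsto\frac{1}{\sqrt{|z_1|^2+|z_2|^{2n}}}\,(z_1,z_2^n),\]
which is well-defined on $S^3$, maps onto $S^3$, is an $n$-fold covering away from the Hopf circle $S^1\times\{0\}$ (since $z_2\mapsto z_2^n$ is $n$-to-$1$ on $\C^*$), and is branched precisely along $\{0\}\times S^1$ upstairs mapping to $\{0\}\times S^1$ downstairs; after possibly swapping the roles of the two Hopf circles this is branched along $S^1\times\{0\}$ as required.

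The key computational step is to pull back the defining $1$-form. Working in $\C^2$ and ignoring the positive radial normalisation factor (which only scales $\omega$ by a nonzero function and hence does not change the foliation $\ker\omega\cap TS^3$, by the scaling invariance discussed after Definition~\ref{defn:GV}), the substitution $w_1=z_1$, $w_2=z_2^n$ gives $\rmd w_2=nz_2^{n-1}\,\rmd z_2$, so that
\[ p^*\omega_1=p^*\bigl(w_1\,\rmd w_2-w_2\,\rmd w_1+w_2\,\rmd w_2\bigr)
= z_1\cdot nz_2^{n-1}\,\rmd z_2-z_2^n\,\rmd z_1+z_2^n\cdot nz_2^{n-1}\,\rmd z_2.\]
Factoring out $z_2^{n-1}$ (nonzero on $S^3_0$) yields
\[ z_2^{n-1}\bigl(nz_1\,\rmd z_2-z_2\,\rmd z_1+nz_2^n\,\rmd z_2\bigr),\]
which up to the nonzero scalar $z_2^{n-1}$ and the harmless replacement of $z_2^n$ by a constant multiple of itself (absorbed by the rescaling $(z_1,z_2)\mapsto(cnz_1,z_2)$ used in the proof of Theorem~\ref{thm:all-P}) is exactly $\omega_n$. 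Thus $p$ pulls back the cooriented foliation $\FF_1$ to $\FF_n$.

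The steps I would carry out in order are: first verify that $p$ descends to a well-defined map $S^3\to S^3$ and compute its branching locus and covering degree; second record that the radial normalisation factor is a positive function, so it does not affect the foliation; third perform the pullback computation above and identify the resulting form with a nonzero-function multiple of $\omega_n$; and fourth confirm that the orientations and coorientations match, so that $p^*\FF_1=\FF_n$ as \emph{oriented and cooriented} foliations. The main obstacle is bookkeeping rather than depth: one must be careful that the branch point of $z_2\mapsto z_2^n$ sits over the correct Hopf circle and that the normalisation on $S^3$ does not introduce zeros of the pulled-back form along $S^1\times\{0\}$, where $z_2^{n-1}$ vanishes; here one uses that $S^1\times\{0\}$ is itself a leaf of both foliations, so the identification of the foliations need only be checked on $S^3_0$ and then extended by continuity across the closed leaf.
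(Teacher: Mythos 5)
Your overall strategy --- compose $(z_1,z_2)\mapsto(z_1,z_2^n)$ with a projection back to the round sphere and check the pullback of $\omega_1$ --- is close in spirit to the paper's, but the step you dismiss as harmless is precisely where the real work lies, and as written it fails. Write $p=r\cdot q$ with $q(z_1,z_2)=(z_1,z_2^n)$ and $r=(|z_1|^2+|z_2|^{2n})^{-1/2}$. Rescaling the \emph{map} is not the same as rescaling the \emph{form}: a direct computation gives
\[ p^*\omega_1=r^2\,q^*\omega_1+r\,z_2^{2n}\,\rmd r, \]
and the second term does not vanish on $TS^3$ away from $\{z_2=0\}$, since $r$ is not constant on $S^3$. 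So the form you computed is $q^*\omega_1$, not $p^*\omega_1$, and its kernel on $TS^3$ is not the foliation pulled back by $p$. Geometrically, $q$ maps $S^3$ onto the non-round hypersurface $\{|w_1|^2+|w_2|^{2/n}=1\}$, and the radial projection from there back to $S^3$ does not preserve the complex-curve foliation $\CC_1=\ker\omega_1$: its leaves $\log w_2-w_1/w_2=c_0$ are carried by the scaling $w\mapsto\lambda w$ to the leaves with constant $c_0+\log\lambda$, so a pointwise-varying radial factor scatters a single leaf across many leaves. Hence $p$ does not carry the induced foliation on that hypersurface to $\FF_1$. (The same objection applies to your appeal to the rescaling $(z_1,z_2)\mapsto(cnz_1,z_2)$ to absorb the unwanted coefficient $n$ in front of $z_2^n\,\rmd z_2$: that linear map does not preserve $S^3$ either. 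The paper avoids this by taking $(z_1,z_2)\mapsto(nz_1,z_2^n)$ from the start, which gives exactly $n z_2^{n-1}\omega_n$.)

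This is exactly the gap the paper's proof is mostly devoted to closing. There one keeps the honest preimage $\Sigma_n=p_n^{-1}(S^3)=\{n^2|z_1|^2+|z_2|^{2n}=1\}$, on which the induced foliation $\FF_n'=\CC_n\cap T\Sigma_n$ genuinely maps to $\FF_1$, and then constructs a diffeomorphism $(S^3,\FF_n)\to(\Sigma_n,\FF_n')$ by flowing each point of $S^3$ \emph{inside its leaf of $\CC_n$} (using the complex flow of $(nz_1+z_2^n)\partial_{z_1}+z_2\partial_{z_2}$ for a time $\zeta(z_1,z_2)$ determined by an implicit equation); moving within the leaves tautologically preserves the complex-curve foliation, which is what the radial projection does not do. Some such leafwise transport argument, or another proof that your pulled-back foliation is diffeomorphic to $\FF_n$, must be supplied. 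A minor further slip: the branch locus of your map is $\{z_2=0\}\cap S^3=S^1\times\{0\}$, which is in fact what the proposition requires, not $\{0\}\times S^1$ as you state; no swapping of the Hopf circles is needed or possible here.
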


\begin{proof}
We start with the branched covering
\[ \begin{array}{rccc}
p_n\co & \C^2      & \longrightarrow & \C^2\\
       & (z_1,z_2) & \longmapsto     & (nz_1,z_2^n).
\end{array}\]
This satisfies $p_n^*\omega_1=nz_2^{n-1}\omega_n$, so it maps the
complex leaves of the foliation $\CC_n$ to those of~$\CC_1$.

Define a diffeomorphic copy of $S^3$ by
\[ \Sigma_n:=p_n^{-1}(S^3)=\{(z_1,z_2)\co n^2|z_1|^2+|z_2|^{2n}=1\}.\]
Then $p_n$ restricts to a branched covering $\Sigma_n\rightarrow S^3$.
We denote by $\FF_n'$ the $1$-dimensional foliation of $\Sigma_n$
given by the intersection with~$\CC_n$; this foliation is mapped
by $p_n$ to $\FF_1$.

It remains to construct a diffeomorphism
\[ \Phi_n\co (S^3,\FF_n)\longrightarrow (\Sigma_n,\FF_n').\]
To this end, we consider the holomorphic vector field
\[ (nz_1+z_2^n)\,\partial_{z_1}+z_2\partial_{z_2}\]
tangent to the leaves of~$\CC_n$. Its complex flow, whose
orbits are the leaves of~$\CC_n$, is given by
\[ \Psi_n^{\zeta}(z_1,z_2)=(\rme^{n\zeta}z_1+\zeta\rme^{n\zeta}z_2^n,
\rme^{\zeta}z_2).\]
Given any smooth complex-valued function $\zeta(z_1,z_2)$, the map
$\Phi_n$ defined by
\[ \Phi_n(z_1,z_2):=\Psi_n^{\zeta(z_1,z_2)}(z_1,z_2)\]
likewise preserves the leaves of~$\CC_n$; this can be seen by
geometric reasoning or with a direct computation showing
$\Phi_n^*\omega_n=\rme^{(n+1)\zeta(z_1,z_2)}\omega_n$.

We would now like to choose $\zeta$ as a real-valued function
on $S^3$ such that $\Phi_n^{\zeta(p)}(p)\in\Sigma_n$ for each
$p\in S^3$. This leads to the implicit equation
\[ n^2\rme^{2n\zeta}|z_1+\zeta z_2^n|^2+\rme^{2n\zeta}|z_2|^{2n}=1\]
for $\zeta$. A straightforward computation shows that the derivative
of the left-hand side with respect to $\zeta$ is everywhere
positive. Moreover, the left-hand side goes to zero for $\zeta\rightarrow
-\infty$, and to infinity for $\zeta\rightarrow\infty$. So this
implicit equation defines a unique smooth real-valued function $\zeta$
with the desired properties. The map $p\mapsto \Phi_n^{\zeta(p)}(p)$
then maps $S^3$ into $\Sigma_n$, and since the inverse map
can be constructed by analogous means, it is actually
a diffeomorphism.
\end{proof}
\section{Rigidity results}
In this section we discuss a number of cases where the
common kernel foliation determines the transverse
holomorphic structure or the taut contact circle.

\begin{lem}
Let $\omegac=\omega_1+\rmi\omega_2$ be a formally integrable complex $1$-form.
Let $Y$ be a vector field generating the common kernel foliation,
and write $L_Y\omegac=(f+\rmi g)\omegac$ with real-valued
functions $f$ and~$g$. Then $\omega_1,\omega_2$ are contact forms
(and hence define a taut contact circle) precisely on the open set
where $g\neq 0$.
\end{lem}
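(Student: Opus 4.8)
The plan is to reduce the contact condition $\omega_j\wedge\rmd\omega_j\neq 0$ to a contraction with the generating vector field~$Y$, just as in the proof of the equivalence of (C1)--(C4), and then to identify the resulting proportionality factor with the function~$g$.

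First I would record that $\omega_j(Y)=0$, so the Cartan formula gives $L_Y\omega_j=Y\ip\rmd\omega_j$. Writing $L_Y\omega_1=a_{11}\omega_1+a_{12}\omega_2$ and $L_Y\omega_2=a_{21}\omega_1+a_{22}\omega_2$ as before, and comparing real and imaginary parts in $L_Y\omegac=(f+\rmi g)\omegac$, one finds
\[ a_{11}=a_{22}=f,\qquad a_{21}=-a_{12}=g, \]
the relations $a_{11}=a_{22}$ and $a_{12}=-a_{21}$ being precisely the formal integrability already encoded in~$\omegac$.

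Next, since the interior product is an antiderivation and $\omega_j(Y)=0$, I would compute
\[ Y\ip(\omega_j\wedge\rmd\omega_j)=-\omega_j\wedge(Y\ip\rmd\omega_j)
=-\omega_j\wedge L_Y\omega_j. \]
Substituting the expressions above yields, for both $j=1$ and $j=2$,
\[ Y\ip(\omega_j\wedge\rmd\omega_j)=g\,\omega_1\wedge\omega_2. \]
In particular $\omega_1\wedge\rmd\omega_1$ and $\omega_2\wedge\rmd\omega_2$ agree, reconfirming~(C4).

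Finally I would invoke two pointwise facts. The real and imaginary parts of $\omegac$ are pointwise linearly independent, so $\omega_1\wedge\omega_2$ never vanishes; and on the $3$-manifold $M$ a $3$-form vanishes at a point exactly when its contraction with the nowhere zero field~$Y$ does, as already used in the proof of the first lemma. Hence $\omega_j\wedge\rmd\omega_j\neq 0$ at a point precisely when $g\neq 0$ there, and where this holds both $\omega_1$ and $\omega_2$ are contact, so by the equivalence recorded after Definition~\ref{defn:tcc} they form a taut contact circle. There is no real obstacle here; the only points requiring care are the bookkeeping that matches $g$ to $a_{21}=-a_{12}$, and the observation that the single function $g$ controls $\omega_1$ and $\omega_2$ simultaneously.
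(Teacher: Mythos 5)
Your proof is correct and follows essentially the same route as the paper: both reduce $\omega_j\wedge\rmd\omega_j$ to its contraction with $Y$, identify $Y\ip(\omega_j\wedge\rmd\omega_j)=g\,\omega_1\wedge\omega_2$ via the Cartan formula, and conclude using that a $3$-form on a $3$-manifold vanishes exactly where its contraction with the nowhere zero field $Y$ does. The only cosmetic difference is that you pass through the real coefficient matrix $(a_{ij})$ from the earlier lemma, whereas the paper computes $\rmi\,Y\ip(\omega_1\wedge\rmd\omegac)=(f+\rmi g)\,\omega_1\wedge\omega_2$ directly in complex form and takes the imaginary part.
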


\begin{proof}
We compute
\begin{eqnarray*}
\rmi Y\ip(\omega_1\wedge\rmd\omegac)
 & = & -\rmi\omega_1\wedge L_Y\omegac\\
 & = & -\rmi\omega_1\wedge (f+\rmi g)\omegac\\
 & = & (f+\rmi g)\omega_1\wedge\omega_2.
\end{eqnarray*}
Taking the imaginary part, we find
\[ Y\ip(\omega_1\wedge\rmd\omega_1)=g\omega_1\wedge\omega_2.\]
This means
\[ \omega_1\wedge\rmd\omega_1=g\,\rmd V,\]
where $\rmd V$ is the volume form defined by $Y\ip\rmd V=
\omega_1\wedge\omega_2$.
\end{proof}

We retain the definition of $Y$, $g$ and $\rmd V$ for the
next lemma and its proof, as well as the theorem
that follows.

\begin{lem}
Let $\omegac'=\omegac+\phi\oomegac$ with $|\phi|<1$
be any other $1$-form defining the same cooriented $1$-dimensional
foliation as~$\omegac$. The condition for $\omegac'$ to be
formally integrable is
\[ Y\phi=2\rmi g\phi.\]
This condition implies $Y|\phi|^2=0$, i.e.\ $|\phi|$ is constant
along the leaves of the foliation.
\end{lem}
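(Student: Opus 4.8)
The plan is to check formal integrability of $\omegac'$ by showing that the $3$-form $\omegac'\wedge\rmd\omegac'$ vanishes, and to do so efficiently by the same device used repeatedly above: on the $3$-manifold $M$ a $3$-form is zero precisely when its interior product with the nowhere-zero field $Y$ is zero. So instead of expanding the full wedge product I would compute $Y\ip(\omegac'\wedge\rmd\omegac')$ and read off a scalar condition on~$\phi$.

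First I would assemble the data supplied by the Cartan formula. Since $Y$ lies in the common kernel, $Y\ip\omegac=0$, whence $L_Y\omegac=Y\ip\rmd\omegac=(f+\rmi g)\omegac$ by hypothesis; conjugating (and using that $Y$ is real) gives $Y\ip\rmd\oomegac=(f-\rmi g)\oomegac$. The key simplifying observation is that $Y\ip\omegac'=Y\ip\omegac+\phi\,(Y\ip\oomegac)=0$, so the Leibniz rule for the interior product collapses the expression to
\[ Y\ip(\omegac'\wedge\rmd\omegac')=-\omegac'\wedge(Y\ip\rmd\omegac').\]
Differentiating $\omegac'=\omegac+\phi\oomegac$ and applying $Y\ip$ (the term $\rmd\phi\wedge\oomegac$ contributing $(Y\phi)\oomegac$) I expect
\[ Y\ip\rmd\omegac'=(f+\rmi g)\omegac+\bigl[Y\phi+\phi(f-\rmi g)\bigr]\oomegac.\]
Wedging this against $\omegac'=\omegac+\phi\oomegac$ and using $\omegac\wedge\omegac=\oomegac\wedge\oomegac=0$, every surviving term is a multiple of the transverse $2$-form $\omegac\wedge\oomegac=-2\rmi\,\omega_1\wedge\omega_2$, which is nowhere zero. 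Collecting coefficients, the $f$-contributions cancel and I anticipate the coefficient $-(Y\phi-2\rmi g\phi)$. Since $\omegac\wedge\oomegac$ never vanishes, $\omegac'\wedge\rmd\omegac'=0$ is therefore equivalent to $Y\phi=2\rmi g\phi$, which is the first assertion.

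For the final statement I would simply differentiate $|\phi|^2=\phi\bar\phi$ along $Y$. As $Y$ is a real vector field, $Y\bar\phi=\overline{Y\phi}$, so $Y|\phi|^2=(Y\phi)\bar\phi+\phi\,\overline{Y\phi}$; substituting $Y\phi=2\rmi g\phi$ with $g$ real makes the two summands $2\rmi g|\phi|^2$ and $-2\rmi g|\phi|^2$, which cancel. Hence $Y|\phi|^2=0$. The computation is essentially routine; the only points requiring care are the Leibniz signs in the interior product and the verification that the real function $f$ drops out of the coefficient. That cancellation is not merely cosmetic—it serves as a genuine consistency check, reflecting the fact that formal integrability constrains only the imaginary twisting $g$ of $\omegac$ along the flow, not its conformal scaling, which is exactly what makes $|\phi|$ a leafwise constant.
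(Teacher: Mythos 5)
Your proof is correct and follows essentially the same route as the paper: both reduce the $3$-form $\omegac'\wedge\rmd\omegac'$ to a scalar multiple of a nowhere-zero top form and read off the condition $Y\phi=2\rmi g\phi$, and the final leafwise-constancy claim is the same one-line computation. The only (harmless) difference is that you contract with $Y$ before expanding, using $L_Y\omegac=(f+\rmi g)\omegac$ directly, whereas the paper expands the $3$-form first and invokes the identity $\omegac\wedge\rmd\oomegac+\oomegac\wedge\rmd\omegac=4g\,\rmd V$ from the preceding lemma; your observation that $f$ drops out is consistent with, and explains, the fact that only $g$ appears in the paper's formula.
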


\begin{proof}
We compute
\begin{eqnarray*}
\omegac'\wedge\rmd\omegac'
 & = & (\omegac+\phi\oomegac)\wedge(\rmd\omegac+\rmd\phi\wedge\oomegac
       +\phi\,\rmd\oomegac)\\
 & = & \rmd\phi\wedge\oomegac\wedge\omegac
       +\phi(\omegac\wedge\rmd\oomegac+\oomegac\wedge\rmd\omegac)\\
 & = & 2\rmi\,\rmd\phi\wedge\omega_1\wedge\omega_2+4\phi g\,\rmd V\\
 & = & 2\bigl(\rmi Y\phi+2\phi g\bigr)\,\rmd V,
\end{eqnarray*}
from which the integrability condition follows.

From
\[ Y|\phi|^2=(Y\phi)\ophi+\phi(Y\ophi)\]
we deduce $Y|\phi|^2=0$ if the integrability condition holds.
\end{proof}

\begin{thm}
\label{thm:unique-hol}
Each of the foliations $\FF^a$, $a\in\C\setminus\R$, and $\FF_n$, $n\in\N$,
admits a unique transverse holomorphic structure for the given
coorientation.
\end{thm}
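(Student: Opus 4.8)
The plan is to establish rigidity by showing that the only deformation of $\omegac$ compatible with the given cooriented foliation is the trivial one. As in the proof of Theorem~\ref{thm:GV-real} and in the two lemmas preceding the theorem, any competing transverse holomorphic structure inducing the same coorientation can be scaled to the form $\omegac'=\omegac+\phi\oomegac$ for a smooth complex-valued function $\phi$ with $|\phi|<1$, and the two structures agree precisely when $\phi\equiv 0$. Thus the whole statement reduces to proving $\phi\equiv 0$ in each case. By the second of these lemmas, formal integrability of $\omegac'$ is equivalent to $Y\phi=2\rmi g\phi$, and this identity forces $Y|\phi|^2=0$; hence $|\phi|$ is constant along every leaf of the foliation.

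The first substantive step is to promote this leafwise constancy to \emph{global} constancy of $|\phi|$, and here I would feed in the asymptotic analysis of the leaves. For $\FF^a$ with $a\in\C\setminus\R$, Proposition~\ref{prop:P-leaves} shows that every non-closed leaf is asymptotic to both Hopf circles; for $\FF_n$, Proposition~\ref{prop:n-asymptotic} shows that every non-closed leaf accumulates on the single closed leaf $S^1\times\{0\}$ at both ends. Since $|\phi|$ is continuous on $S^3$ and constant along each leaf, taking points on a non-closed leaf that converge to a point of a Hopf circle forces the leafwise value to equal the constant value of $|\phi|$ on that Hopf circle. In the parametric case this makes the two Hopf-circle constants and all leafwise constants coincide; in the discrete case every leafwise constant equals the one on $S^1\times\{0\}$. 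Either way $|\phi|\equiv c$ is a single global constant.

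It then remains to exclude $0<c<1$. Suppose $c\neq 0$; then $\phi$ is nowhere zero, so $\phi/c\co S^3\to S^1$ lifts to a global smooth real function because $S^3$ is simply connected. Writing $\phi=c\,\rme^{\rmi\psi}$ and substituting into $Y\phi=2\rmi g\phi$ gives $Y\psi=2g$. Integrating this along a closed leaf $\gamma$ (a Hopf circle), on which the flow of $Y$ is periodic of some period $T$, yields $0=\psi(\gamma(T))-\psi(\gamma(0))=2\int_0^T g\,\rmd t$, since $\psi$ is single-valued. A contradiction will therefore follow as soon as $\int_\gamma g\,\rmd t\neq 0$ for a suitable Hopf circle.

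The last step is this computation of $\int_\gamma g\,\rmd t$, which I would carry out by reading off $h=f+\rmi g$ on the closed leaf from $L_Y\omegac=h\omegac$ in the local models near the Hopf circles used in the proofs of Propositions~\ref{prop:log-a} and~\ref{prop:log-n}. On $S^1\times\{0\}$ in $\FF^a$ one finds $h=\rmi/a$, so $g=\re(a)/|a|^2$ and $\int_0^{2\pi}g\,\rmd t=2\pi\,\re(a)/|a|^2$; by the symmetry $a\leftrightarrow 1-a$ the circle $\{0\}\times S^1$ gives $2\pi\,\re(1-a)/|1-a|^2$. As $\re(a)$ and $1-\re(a)$ never vanish simultaneously --- the observation flagged in Remark~\ref{rem:a-cc} --- at least one of these is nonzero. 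For $\FF_n$ the analogous computation on $S^1\times\{0\}$ gives $h=\rmi(n+1)/n$, whence $\int_0^{2\pi}g\,\rmd t=2\pi(n+1)/n\neq 0$. In every case $c=0$, so $\phi\equiv 0$. I expect the globalisation step to be the main obstacle: leafwise constancy of $|\phi|$ is immediate, but converting it to global constancy genuinely needs the recurrence of the non-closed leaves to the Hopf circles, which is exactly why the theorem excludes the Seifert values $a\in(0,1)$, where all leaves are closed and this mechanism is unavailable.
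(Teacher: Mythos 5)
Your proposal is correct and follows essentially the same route as the paper: reduce to $\omegac'=\omegac+\phi\oomegac$, use $Y|\phi|^2=0$ plus the asymptotics of the leaves towards the Hopf circles to make $|\phi|$ globally constant, and then derive a contradiction on a closed leaf from $\int_\gamma g\,\rmd t\neq 0$. The only (cosmetic) difference is in the last step, where the paper argues that $\phi/|\phi|$ would have non-zero degree on a Hopf circle (since $g\neq 0$ there by the contact condition) yet extends over its Seifert disc, while you lift $\phi/c$ to a single-valued real function on the simply connected $S^3$ and compute $\int_\gamma g\,\rmd t$ explicitly from the normal forms --- two phrasings of the same obstruction, and your computed values of $h$ on the closed leaves are correct.
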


\begin{proof}
In the notation of the two preceding lemmata, we need to show
$\phi=0$ if $\omegac$ equals one of the $\omega^a$, $a\in\C\setminus\R$,
or an $\omega_n$ (provided $\phi$ defines another formally integrable
$1$-form).

By the results in Section~\ref{section:topology}, in these foliations
all leaves (except for the second Hopf circle $\{0\}\times S^1$
in $\FF^a$) are asymptotic in at least one direction to the Hopf
circle $S^1\times\{0\}$. It follows that $|\phi|$,
being constant along the leaves, must be constant on~$S^3$.

If $|\phi|$ were non-zero, we could define a map
\[ \phi_1:=\frac{\phi}{|\phi|}\co S^3\longrightarrow S^1\subset\C,\]
still satisfying the integrability condition
$Y\phi_1=2\rmi g\phi_1$ from the foregoing lemma.
But the $\omega_n$ define contact circles,
and so does $\omega^a$ near at least one Hopf circle $O$ by
Remark~\ref{rem:a-cc}, so there we have $g\neq 0$.
This implies that $\phi_1|_O\co S^1\equiv O\rightarrow S^1$
has non-zero degree, but it also extends as a map over the Seifert disc
of~$O$. This contradiction shows that we must have $\phi=0$.
\end{proof}

\begin{rem}
For the $\FF^a$ with $a\in (0,1)$, the transverse holomorphic
structure is not unique:
\begin{itemize}
\item[-] If $a$ is rational, $\FF^a$ defines a Seifert fibration,
and different holomorphic structures on the quotient orbifold
give us different transverse holomorphic structures.
\item[-] If $a$ is irrational, the leaves still lie on Hopf tori,
and by changing the metric structure in the direction
orthogonal to the Hopf tori we obtain different transverse
conformal (and hence holomorphic) structures.
\end{itemize}

We expand a bit on the second point. Outside the Hopf circles,
the tangent bundle of $S^3$ is trivialised by the orthonormal frame
(with respect to the standard metric)
\[ \left\{\begin{array}{rcl}
\partial_{\theta_1}/r_1             & = &
            (x_1\partial_{y_1}-y_1\partial_{x_1})/r_1\\[2mm]
\partial_{\theta_2}/r_2             & = &
            (x_2\partial_{y_2}-y_2\partial_{x_2})/r_2\\[2mm]
r_2\partial_{r_1}-r_1\partial_{r_2} & = &
   \displaystyle{\frac{r_2}{r_1}\,(x_1\partial_{x_1}+y_1\partial_{y_1})-
                 \frac{r_1}{r_2}\,(x_2\partial_{x_2}+y_2\partial_{y_2})}.
\end{array}\right.
\]
The third vector in this frame is invariant under the flow
of $\partial_{\theta_1}$ and $\partial_{\theta_2}$. Any metric
for which the first two vectors fields are orthonormal, and
the third one orthogonal with length a function of~$r_1$, defines
a transverse conformal structure for $\FF^a$, $a\in (0,1)$.
\end{rem}

The following corollary improves on Corollary~\ref{cor:P-fol};
we do not need to know the transverse holomorphic structure
to determine $\FF^a$. Recall that a Poincar\'e foliation belongs
to the parametric family if and only if it has at least two
closed leaves.

\begin{cor}
\label{cor:P-recover}
From any cooriented Poincar\'e foliation $\FF$ in
the parametric family (but without
any \emph{a priori} given transverse holomorphic
structure) one can recover
the value $a(1-a)$ --- and hence the class $[a]\in\PP/(a\sim 1-a)$ ---
for which there is an orientation-preserving diffeomorphism
of $S^3$ sending $\FF$ to $\FF^a$ as a cooriented foliation.
\end{cor}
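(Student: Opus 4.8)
The plan is to recover $a(1-a)$ from the logarithmic monodromies of the closed leaves of $\FF$. The crucial point is that, by the lemma preceding Proposition~\ref{prop:log-a}, the logarithmic monodromy of a closed leaf depends only on the oriented and cooriented foliation, not on a choice of transverse holomorphic structure; and since $\FF$ lives on the oriented manifold $S^3$, its coorientation fixes the leaf orientation. Consequently, if $\Phi$ is any orientation-preserving diffeomorphism sending $\FF$ to $\FF^a$ as cooriented foliations, then $\Phi$ preserves leaf orientations and carries closed leaves to closed leaves with equal logarithmic monodromy. Thus the unordered collection of logarithmic monodromies of the closed leaves of $\FF$ agrees with that of $\FF^a$, which we have already computed.

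First I would record the elementary algebra that turns Proposition~\ref{prop:log-a} into a recovery formula. Writing $\mu=(1-a)/a$, the two Hopf circles of $\FF^a$ carry logarithmic monodromies $\ell_1=2\pi\rmi\,\mu$ and $\ell_2=2\pi\rmi/\mu$. These are interchanged both by the ambiguity $a\sim 1-a$ and by any relabelling of the two Hopf circles, so only their symmetric functions are intrinsic; but this is exactly enough, since
\[ a(1-a)=\frac{\mu}{(1+\mu)^2}=\frac{2\pi\rmi}{\ell_1+\ell_2+4\pi\rmi}, \]
and the right-hand side is symmetric in $\ell_1,\ell_2$ (its denominator equals $2\pi\rmi(1+\mu)^2/\mu$ and hence never vanishes). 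It therefore suffices to locate, intrinsically in $\FF$, the two closed leaves playing the role of the Hopf circles and to read off their logarithmic monodromies.

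Next I would carry out the topological identification, splitting according to the number of closed leaves of $\FF$, which, $\FF$ being in the parametric family, is at least two. By Proposition~\ref{prop:P-leaves}, a model $\FF^a$ has exactly two closed leaves when $a\in\C\setminus\R$ or $a\in(0,1)$ is irrational, and infinitely many (all leaves closed) precisely when $a\in(0,1)\cap\Q$, where $\FF^a$ is a Seifert fibration; since this count is a diffeomorphism invariant, $\FF$ satisfies the same dichotomy. In the first situation $\FF$ has exactly two closed leaves, necessarily the images of the Hopf circles, and feeding their logarithmic monodromies into the displayed formula yields $a(1-a)$. In the Seifert situation I would instead recover $a$ from the quotient orbifold: by Proposition~\ref{prop:Seifert} the exceptional fibres of $\FF^a$ are the two Hopf circles with multiplicities $p_1,p_2$ where $a/(1-a)=p_1/p_2$, and these multiplicities (equivalently, the orders of the cone points of the quotient orbifold $S^2(p_1,p_2)$) are diffeomorphism invariants of the fibration $\FF$. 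Hence $p_1/p_2$, and with it $a(1-a)$, is determined by $\FF$. The single degenerate case is the Hopf fibration $a=1/2$, which is the unique member of the family with no exceptional fibre and is therefore recognised directly, giving $a(1-a)=1/4$.

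The main obstacle I anticipate is not any hard computation but the bookkeeping of ambiguities: the diffeomorphism is only determined up to the symmetry $a\sim 1-a$, which interchanges the two Hopf circles, so every quantity used must be symmetric in the two of them. The role of the explicit symmetric formula above, and of the invariance of $a(1-a)$ under $a\leftrightarrow 1-a$, is precisely to absorb this indeterminacy; checking that the two regimes (exactly two closed leaves versus a Seifert fibration) exhaust the parametric family and dovetail correctly at the Hopf fibration is the remaining point requiring care.
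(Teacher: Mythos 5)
Your proof is correct, but it takes a genuinely different route from the paper's. The paper recovers $a(1-a)$ from the Bott invariant: for $a\in\C\setminus\R$ it invokes the uniqueness of the transverse holomorphic structure (Theorem~\ref{thm:unique-hol}) so that the Bott number of Proposition~\ref{prop:GV-Poincare} is an invariant of the bare foliation, and for $a\in(0,1)$ it invokes Remark~\ref{rem:alpha-a} and the generalised Gau{\ss}--Bonnet theorem (Theorem~\ref{thm:GV-real}) to conclude that all transverse holomorphic structures yield the same Bott number. You instead read off $a(1-a)$ from the logarithmic monodromies of the two closed leaves (using the lemma that these are independent of the transverse holomorphic structure, plus Proposition~\ref{prop:log-a}) when there are exactly two closed leaves, and from the multiplicities of the exceptional Seifert fibres (Proposition~\ref{prop:Seifert}) in the all-leaves-closed case $a\in(0,1)\cap\Q$; your symmetric formula $a(1-a)=2\pi\rmi/(\ell_1+\ell_2+4\pi\rmi)$ correctly absorbs the $a\sim 1-a$ and leaf-relabelling ambiguities, and your case split is exhaustive by Proposition~\ref{prop:P-leaves}. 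This is essentially the ``alternative'' argument the paper itself sketches in the remark following Theorem~\ref{thm:common-kernel}. What each approach buys: the paper's is uniform (a single invariant covers every $a\in\PP$) but leans on the heavier rigidity result Theorem~\ref{thm:unique-hol}; yours is more elementary and dynamical, needing only the homotopy lemma for the logarithmic monodromy and the Seifert invariant computation, at the cost of a case distinction and the minor bookkeeping of multiplicities equal to~$1$ (which your argument handles implicitly, since the unordered pair $\{p_1,p_2\}$ is still determined when one or both exceptional fibres degenerate to regular ones).
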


\begin{proof}
We need to show that $\FF^a$ determines~$a(1-a)$. If $a\in\C\setminus\R$,
then $\FF^a$ admits a unique transverse holomorphic structure,
and the Bott invariant of this structure gives us
$a(1-a)$ by Proposition~\ref{prop:GV-Poincare}.
If $a\in (0,1)$, then by Remark~\ref{rem:alpha-a}
we are in the situation of Theorem~\ref{thm:GV-real}. Thus,
although there is a choice of transverse holomorphic structures,
they all yield the same Bott invariant as~$\omega^a$,
and again we recover $a(1-a)$.
\end{proof}

From this we now want to deduce the uniformisation result that the moduli
space of conformal structures on any orbifold $S^2(k_1,k_2)$, where
$k_1,k_2\in\N$ are not necessarily coprime,
is a single point. This class of $2$-dimensional
orbifolds contains all the bad ones, i.e.\ those not covered
by a surface:  tear-drops, where precisely one of the $k_i$ is equal to
one, and asymmetric spindles, where $k_1,k_2$ are different and both
greater than~$1$. We begin with a topological preparation.

\begin{prop}
\label{prop:Seifert-S-L}
Given any natural numbers $k_1,k_2$, there are coprime natural numbers
$p_1,p_2$ and a natural number $m$ such that the Seifert fibration
of $S^3\subset\C^2$ determined by the $S^1$-action
\[ \theta(z_1,z_2)=(\rme^{\rmi p_1\theta}z_1,\rme^{\rmi p_2\theta}z_2),\]
which has base orbifold $S^2(p_1,p_2)$, descends to a Seifert fibration
of the left-quotient
\[ L(m,m-1)=S^3/(z_1,z_2)\sim (\rme^{2\pi\rmi/m}z_1,\rme^{-2\pi\rmi/m}z_2) \]
with base orbifold $S^2(k_1,k_2)$. For $p_1,p_2$ one may always take
the pair of coprime natural numbers with $p_1/p_2=k_1/k_2$, and
$m=k_1+k_2$.
\end{prop}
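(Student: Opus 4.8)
The plan is to realise the base orbifold as an iterated quotient of $S^3$ and to pin down the two cone points by a local isotropy computation. First I would set $d=\gcd(k_1,k_2)$, take the coprime pair $p_1=k_1/d$, $p_2=k_2/d$ (so that $p_1/p_2=k_1/k_2$), and put $m=k_1+k_2=d(p_1+p_2)$. The deck transformation $\zeta\co(z_1,z_2)\mapsto(\rme^{2\pi\rmi/m}z_1,\rme^{-2\pi\rmi/m}z_2)$ acts freely on $S^3$: a power $\zeta^j$ with $0<j<m$ could only fix a point with $z_1=z_2=0$, which is impossible on $S^3$. Hence $S^3/\langle\zeta\rangle$ is the lens space $L(m,m-1)$. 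Since both the $S^1$-action with weights $(p_1,p_2)$ and the action of $\zeta$ are diagonal on $\C^2$, they commute, so the $S^1$-action descends to $L(m,m-1)$ and the Seifert fibration descends with it.

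The heart of the matter is the base orbifold, which is the orbit space $L(m,m-1)/S^1=S^3/G$, where $G$ is the subgroup of the diagonal torus $T^2$ generated by the circle $H=\{(\rme^{\rmi p_1\theta},\rme^{\rmi p_2\theta})\}$ and by $\zeta$. The first point to record is that the descended $S^1$-action is \emph{not} effective: one checks that $(\rme^{\rmi p_1\theta},\rme^{\rmi p_2\theta})\in\langle\zeta\rangle$ precisely when $\theta\in\frac{2\pi}{p_1+p_2}\Z$, so the ineffective kernel is the subgroup $\Z_{p_1+p_2}\subset S^1$ and $G/H\cong\Z_d$. Consequently $S^3/G=S^2(p_1,p_2)/\Z_d$, with $S^2(p_1,p_2)=S^3/H$ as in Proposition~\ref{prop:Seifert}. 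Using the $\C^*$-invariant coordinate $u=z_1^{p_2}/z_2^{p_1}$ on the orbifold $S^2(p_1,p_2)$, I would then compute that $\zeta$ sends $u\mapsto\rme^{2\pi\rmi(p_1+p_2)/m}u=\rme^{2\pi\rmi/d}u$. Thus the residual $\Z_d$ acts on the coarse sphere as a rotation by $2\pi/d$, whose only fixed points are the two cone points $u=0$ (the Hopf circle $\{z_1=0\}$) and $u=\infty$ (the Hopf circle $\{z_2=0\}$); in particular the coarse quotient is again $S^2$ and no new cone points arise away from the poles.

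It then remains to read off the two cone orders, which I would do by computing the isotropy of the effective circle action directly on $L(m,m-1)$. At a point $[z_1,0]$ the condition $\bar\theta\cdot[z_1,0]=[z_1,0]$ amounts to $\rme^{\rmi p_1\theta}z_1=\rme^{2\pi\rmi j/m}z_1$ for some integer $j$, giving full isotropy $\Z_{mp_1}\subset S^1$; dividing out the ineffective kernel $\Z_{p_1+p_2}$ (which indeed divides $\Z_{mp_1}$, since $(p_1+p_2)\mid mp_1$) leaves effective isotropy of order $mp_1/(p_1+p_2)=dp_1=k_1$. The symmetric computation at $[0,z_2]$ yields order $dp_2=k_2$, while a generic point has full isotropy exactly the kernel $\Z_{p_1+p_2}$, hence trivial effective isotropy. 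Therefore the descended Seifert fibration has coarse base $S^2$ with cone points of orders $k_1$ and $k_2$, i.e.\ base orbifold $S^2(k_1,k_2)$, as claimed; this agrees with the rotation picture of the previous paragraph, where $u=\infty$ acquires order $dp_1=k_1$ and $u=0$ order $dp_2=k_2$.

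I expect the only real obstacle to be the bookkeeping around the ineffectivity of the descended circle action: the naive upstairs isotropy orders $mp_1,mp_2$ must be divided by the order $p_1+p_2$ of the kernel, and it is precisely this division that converts the multiplicities $(p_1,p_2)$ of the base of the $S^3$-fibration into the desired $(k_1,k_2)=(dp_1,dp_2)$. Keeping the effective and the full (ineffective) actions cleanly separated is the delicate point; everything else reduces to a direct computation inside the diagonal torus $T^2$.
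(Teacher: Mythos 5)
Your argument is correct and is essentially the paper's own proof in group-theoretic clothing: the paper computes fibre lengths in $L(m,m-1)$ ($2\pi/(p_1+p_2)$ for the regular fibres, $2\pi/(p_jm)$ for the two exceptional ones, via the same system of congruences you solve) and takes ratios, which is exactly your computation of the ineffective kernel $\Z_{p_1+p_2}$ and of the effective isotropy orders $mp_j/(p_1+p_2)=dp_j=k_j$. The only genuine addition is your explicit verification, via the invariant coordinate $u=z_1^{p_2}/z_2^{p_1}$, that the residual $\Z_d$ acts on the coarse sphere as a rotation and hence produces no cone points beyond the two poles -- a point the paper leaves implicit.
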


\begin{proof}
In the described Seifert fibration of~$S^3$, the regular fibres
have length $2\pi$, and the multiple fibres through $(1,0)$ and $(0,1)$
have length $2\pi/p_1$ and $2\pi/p_2$, respectively. The $\Z_m$-action
on $S^3$ commutes with the $S^1$-action, so it sends Seifert fibres
to Seifert fibres and induces the structure of a Seifert fibration
on $L(m,m-1)$.

The two multiple fibres in $S^3$ are mapped into
themselves by the $\Z_m$-action, so the length of the corresponding
fibres in $L(m,m-1)$ is $2\pi/p_1m$ and $2\pi/p_2m$, respectively.
The length of the regular Seifert fibres in $L(m,m-1)$ is given
by the minimal $\theta\in (0,2\pi]$ such that there are natural numbers
\[ k\in\{1,2,\ldots,m\},\;\;\;
l_1\in\{0,1,\ldots,p_1-1\},\;\;\;
l_2\in\{1,2,\ldots,p_2\} \]
with
\begin{equation}
\label{eqn:theta}
\left\{\begin{array}{rcr}
p_1\theta & = & \displaystyle{2\pi\frac{k}{m}+2\pi l_1},\\[3mm]
p_2\theta & = & \displaystyle{-2\pi\frac{k}{m}+2\pi l_2}.
\end{array}\right.
\end{equation}
This implies $(p_1+p_2)\theta=2\pi(l_1+l_2)$. Hence, the minimal
$\theta$ is $2\pi/(p_1+p_2)$, which can indeed be realised
for a suitable $k$ if $m$ is a multiple of $p_1+p_2$.

Now, given $k_1,k_2$, set $m=k_1+k_2$ and let $p_1,p_2$ be the coprime
natural numbers with $k_1/k_2=p_1/p_2$. Then (\ref{eqn:theta})
is satisfied with $\theta=2\pi/(p_1+p_2)$, $l_1=0$,
$l_2=1$, and $k=k_1$. So the regular fibres in $L(m,m-1)$
have length
\[ \frac{2\pi}{p_1+p_2}=\frac{2\pi k_1}{p_1(k_1+k_2)}=
\frac{2\pi k_2}{p_2(k_1+k_2)},\]
compared to the length of the multiple fibres
\[ \frac{2\pi}{p_jm}=\frac{2\pi}{p_j(k_1+k_2)},\;\;\; j=1,2,\]
which means that the multiplicities are $k_1,k_2$.
\end{proof}

\begin{rem}
The choice of $m=k_1+k_2$ is not the smallest possible, in
general. For instance, if $k_1=p_1^2$ and $k_2=p_1p_2$ with
$p_1,p_2$ coprime, one can take $m=p_1$, since the corresponding
$\Z_m$-action freely permutes the regular fibres in~$S^3$.
\end{rem}

In the following uniformisation theorem and its proof it
is convenient to think of a conformal structure on an orbifold
as a transverse conformal structure on a Seifert fibration over it,
and of an orbifold diffeomorphism as a fibre-preserving
diffeomorphism of that Seifert manifold.
This uniformisation theorem has been proved previously by Zhu~\cite{zhu97},
using the Ricci flow.

\begin{thm}
\label{thm:orbi}
For any natural numbers $k_1,k_2$, the conformal
structure on the orbifold $S^2(k_1,k_2)$ is unique up to orbifold
diffeomorphism.
\end{thm}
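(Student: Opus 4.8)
The plan is to translate the statement into the theory of transversely holomorphic flows developed above and then exploit the rigidity of the Poincar\'e foliations. Following the convention stated before the theorem, I regard a conformal structure on $S^2(k_1,k_2)$ as a transverse conformal structure --- equivalently, after fixing a transverse orientation, a transverse holomorphic structure --- on the Seifert fibration of $L(m,m-1)$ over $S^2(k_1,k_2)$ provided by Proposition~\ref{prop:Seifert-S-L}, with $m=k_1+k_2$ and $p_1/p_2=k_1/k_2$ coprime. Pulling back under the covering $S^3\rightarrow L(m,m-1)$ turns this into a transverse holomorphic structure $J$, invariant under the deck group, on the standard Seifert fibration of $S^3$ given by the $S^1$-action $(z_1,z_2)\mapsto(\rme^{\rmi p_1\theta}z_1,\rme^{\rmi p_2\theta}z_2)$, whose orbits are the leaves of the Poincar\'e foliation $\FF^a$ with $a=p_1/(p_1+p_2)\in(0,1)\cap\Q$ (Propositions~\ref{prop:P-leaves} and~\ref{prop:Seifert}). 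The standard $\omega^a$ is itself invariant under $(z_1,z_2)\mapsto(\zeta z_1,\zeta^{-1}z_2)$ with $\zeta=\rme^{2\pi\rmi/m}$, so it descends to the standard conformal structure on $S^2(k_1,k_2)$. Hence it suffices to produce a fibre-preserving diffeomorphism of $(S^3,\FF^a)$ carrying $J$ to the transverse holomorphic structure of $\omega^a$, compatibly with the deck action.

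The core rigidity step takes place on $S^3$. The pair $(\FF^a,J)$ is a transversely holomorphic flow, so by Theorem~\ref{thm:all-P} (and the normal-form description in its proof) there is an orientation-preserving diffeomorphism $\Phi$ of $S^3$ carrying it to one of the standard models $(\FF^b,\omega^b)$ or $(\FF_n,\omega_n)$. Since every leaf of $\FF^a$ is closed, the same must hold for the target; this rules out the discrete family, all of whose leaves are asymptotic to a Hopf circle (Proposition~\ref{prop:n-asymptotic}), and forces $b\in(0,1)\cap\Q$. As $\Phi$ carries $\FF^a$ to $\FF^b$ as cooriented foliations, Corollary~\ref{cor:P-recover} yields $a(1-a)=b(1-b)$, that is $[b]=[a]$; composing with the swap $(z_1,z_2)\mapsto(-z_2,z_1)$ if necessary, we may take $b=a$. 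Thus $\Phi$ preserves the Seifert foliation $\FF^a$ and carries $J$ to the structure of $\omega^a$. For coprime $k_1,k_2$ this already proves the theorem, since $S^2(p_1,p_2)$ with $p_1,p_2$ coprime is directly the quotient orbifold of $\FF^a$ on $S^3$, and $\Phi$, permuting the leaves, descends to the required orbifold diffeomorphism $\overline{\Phi}$.

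For general $k_1,k_2$ the remaining, and most delicate, point is to descend $\Phi$ through the covering $S^3\rightarrow L(m,m-1)$, which requires compatibility with the deck group; I expect this equivariance to be the main obstacle, since the diffeomorphism furnished by Theorem~\ref{thm:all-P} is produced by a Poincar\'e--Dulac normalisation that need not commute with the deck action. The way I would resolve it is to pass to the induced orbifold automorphism $\overline{\Phi}$ of $S^2(p_1,p_2)$. The deck group acts on the base $S^2(p_1,p_2)$ by a finite cyclic group of rotations fixing the two cone points (the images of the two Hopf circles, which it preserves setwise), and the quotient orbifold is exactly $S^2(k_1,k_2)$. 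Now $\overline{\Phi}$ fixes the two cone points and takes the $J$-structure to the standard one, so up to conformal equivalence it is a M\"obius map fixing those two marked points; such maps lie in the normaliser of the rotation group fixing the same points, whence $\overline{\Phi}$ conjugates the deck rotation to itself or to its inverse. In either case $\overline{\Phi}$ descends to an orbifold diffeomorphism of $S^2(k_1,k_2)=S^2(p_1,p_2)/(\text{deck})$ carrying the given conformal structure to the standard one, which completes the proof.
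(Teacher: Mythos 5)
Your argument is essentially correct, but it takes a genuinely different route from the paper's. The paper never leaves the lens space: it promotes the given conformal structure to a taut contact circle on $L(m,m-1)$ by fixing the Legendrian contact form $\omega_1=\omega_1^a$ (which descends because it is $\Z_m$-invariant) and letting the conformal structure determine~$\omega_2$; the classification of taut contact circles on left-quotients from \cite[Proposition~6.1]{gego95} together with Corollary~\ref{cor:P-recover} then identifies the pair with $(\omega_1^a,\omega_2^a)$ up to homothety and diffeomorphism, and no equivariance question ever arises. You instead pull everything back to $S^3$, invoke the Brunella--Ghys/Poincar\'e--Dulac classification to normalise the transverse holomorphic structure there, and then push the resulting diffeomorphism back down through the covering. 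What your route buys is independence from the taut-contact-circle machinery on lens spaces; what it costs is the descent step, which the paper's construction makes unnecessary.

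Two points in your version need tightening. First, you need more than the literal statement of Theorem~\ref{thm:all-P}, which classifies the \emph{foliations}; you need that every \emph{pair} (foliation, transverse holomorphic structure) is diffeomorphic to a model pair. This is what Brunella and Ghys actually prove, and your parenthetical appeal to the normal-form proof covers it, but it should be said explicitly --- note that the paper deliberately routes Corollary~\ref{cor:P-recover} through Theorem~\ref{thm:GV-real} precisely because the transverse holomorphic structure on $\FF^a$, $a\in(0,1)$, is \emph{not} unique. Second, the sentence ``up to conformal equivalence it is a M\"obius map'' does not parse: $\overline{\Phi}$ is conformal from $\overline{J}$ to the standard structure, not an automorphism of either. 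The correct argument is about the conjugated group: $\overline{\Phi}\,\overline{G}\,\overline{\Phi}^{-1}$ is a finite cyclic group of conformal automorphisms of the standard sphere fixing the two cone points, hence a subgroup of $\C^*$, and $\C^*$ has a unique subgroup of each finite order, so this conjugate equals $\overline{G}$ and $\overline{\Phi}$ descends. When $k_1=k_2$ (so $p_1=p_2=1$ and the cone points upstairs are not distinguished by multiplicities) the conjugate is only conjugate to $\overline{G}$ inside $\mathrm{PSL}_2\C$, and you must first compose $\overline{\Phi}$ with a M\"obius transformation; this is a harmless but necessary patch.
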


\begin{proof}
Define the coprime natural numbers $p_1,p_2$ by the condition
$p_1/p_2=k_1/k_2$.
Consider the diagram
\[
\begin{CD}
S^3       @>>>   L(m,m-1)\\
@VVV             @VVV\\
S^2(p_1,p_2)  @>>>  S^2(k_1,k_2)
\end{CD}
\]
from the discussion in the preceding proposition.
Choose a contact form $\omega_1$ on $L(m,m-1)$ for which the Seifert
fibration $L(m,m-1)\rightarrow S^2(k_1,k_2)$ is Legendrian, i.e.\ tangent
to $\ker\omega_1$. For instance, the $1$-form $\omega_1^a$
on $S^3$ with $a/(1-a)=p_1/p_2$ is such a contact form
on the Seifert fibration $S^3\rightarrow S^2(p_1,p_2)$,
and being $Z_m$-invariant it descends to $L(m,m-1)$.

Given a conformal structure on $S^2(k_1,k_2)$,
define a second $1$-form $\omega_2$ on the lens space
$L(m,m-1)$ by stipulating that
the $2$-plane field $\ker\omega_2$ be
tangent to the fibres of $L(m,m-1)\rightarrow S^2(k_1,k_2)$,
and that $\omega_1\otimes\omega_1+\omega_2\otimes\omega_2$ define the
transverse conformal structure; this $\omega_2$ is unique up to sign.
Then $\omegac:=\omega_1+\rmi\omega_2$ is formally
integrable. With $\omega_1$ being a contact form, this implies
that $(\omega_1,\omega_2)$ is in fact a taut contact
circle.

By the classification of
taut contact circles in \cite[Proposition~6.1]{gego95},
$(\omega_1,\omega_2)$ equals $(\omega_1^a,\omega_2^a)$
(regarded as taut contact circle on $L(m,m-1)$)
up to homothety and diffeomorphism for a unique~$[a]$.
By Corollary~\ref{cor:P-recover}, this must be the
class $[a]$ determined by $a/(1-a)=p_1/p_2$, that is,
the one we chose above to define~$\omega_1$.
Thus, the given conformal structure on $S^2(k_1,k_2)$ is diffeomorphic
to the one determined by $(\omega_1^a,\omega_2^a)$
on $L(m,m-1)$.
\end{proof}

For taut contact circles we have an even more succinct statement
than Corollary~\ref{cor:P-recover}.

\begin{thm}
\label{thm:common-kernel}
The homothety class of a taut contact circle on $S^3$ (inducing
the standard orientation) is determined, up to
orientation-preserving diffeomorphism, by its cooriented
common kernel foliation.
\end{thm}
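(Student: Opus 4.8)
The plan is to reduce the classification of homothety classes of taut contact circles to the classification of their cooriented common kernel foliations, using the rigidity results already established. The key observation is that a taut contact circle $(\omega_1,\omega_2)$ on $S^3$ determines a formally integrable complex $1$-form $\omegac=\omega_1+\rmi\omega_2$ (by the comment after Definition~\ref{defn:tcc} and condition (C4)), whose common kernel foliation $\FF$ is a cooriented Poincar\'e foliation. By Theorem~\ref{thm:all-P}, up to orientation-preserving diffeomorphism this foliation is one of the $\FF^a$ with $a\in\PP$ or one of the $\FF_n$ with $n\in\N$. So the first step is: suppose two taut contact circles have cooriented common kernel foliations that are diffeomorphic as cooriented foliations; I want to conclude their homothety classes agree up to diffeomorphism.

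First I would dispose of the case where the common kernel foliation has at least two closed leaves, i.e.\ belongs to the parametric family. By Corollary~\ref{cor:P-recover}, the cooriented foliation $\FF$ alone (without any transverse holomorphic structure) determines the class $[a]\in\PP/(a\sim 1-a)$ for which $\FF$ is diffeomorphic to $\FF^a$. For a taut contact circle we need the contact condition $\omega_j\wedge\rmd\omega_j\neq0$, which by Remark~\ref{rem:a-cc} forces $0<\re(a)<1$, so $[a]$ lies in $\MM$. Now the homothety class of the contact circle is governed by the transverse holomorphic structure on $\FF^a$: by Theorem~\ref{thm:unique-hol} this structure is unique for $a\in\C\setminus\R$, and for $a\in(0,1)$ the classification of taut contact circles in \cite[Proposition~6.1]{gego95} pins down the homothety class from $[a]$. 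Either way, $\FF$ together with its coorientation determines $[a]\in\MM$, and hence the homothety class of the taut contact circle up to diffeomorphism.

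Next I would treat the case where the common kernel foliation has exactly one closed leaf, i.e.\ is diffeomorphic to some $\FF_n$ in the discrete family. Here Theorem~\ref{thm:unique-hol} applies directly: each $\FF_n$ admits a \emph{unique} transverse holomorphic structure for the given coorientation. Since a taut contact circle's homothety class is an invariant of its transverse holomorphic structure (the homothety class being exactly the class of $\rho\rme^{\rmi\theta}\omegac$, as noted after the Bott invariant remark), uniqueness of the transverse holomorphic structure forces the homothety class to be determined by $\FF_n$ alone, up to orientation-preserving diffeomorphism. The integer $n$ is itself a diffeomorphism invariant of the cooriented foliation --- for instance it is recoverable from the logarithmic monodromy $2\pi\rmi/n$ of the closed leaf (Proposition~\ref{prop:log-n}), or from the $n$ attracting petals of the Leau--Fatou flower (Proposition~\ref{prop:LF}) --- so distinct discrete foliations are not confused.

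The main obstacle, and the step requiring the most care, is ensuring that the dichotomy is exhaustive and that the two families never overlap under the equivalence relation at play. I would pin this down by invoking the characterisation recalled just before Corollary~\ref{cor:P-recover}: a Poincar\'e foliation belongs to the parametric family precisely when it has at least two closed leaves, and to the discrete family precisely when it has exactly one. This count of closed leaves is manifestly a diffeomorphism invariant of the cooriented foliation, so the two regimes are cleanly separated and each is handled by the appropriate rigidity statement above. Assembling these pieces, the cooriented common kernel foliation determines, up to orientation-preserving diffeomorphism, both which family the contact circle belongs to and the relevant modulus ($[a]\in\MM$ or $n\in\N$), hence the homothety class, as claimed.
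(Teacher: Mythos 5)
Your proposal is correct and follows essentially the same route as the paper's proof: split according to the number of closed leaves, recover $[a]\in\MM$ via Corollary~\ref{cor:P-recover} in the parametric case, and recover $n$ from the logarithmic monodromy (Proposition~\ref{prop:log-n}) or the Leau--Fatou flower (Proposition~\ref{prop:LF}) in the discrete case, with the classification of homothety classes from \cite{gego95} doing the rest. The extra detour through Theorem~\ref{thm:unique-hol} in the parametric case is harmless but not needed once $[a]$ is pinned down.
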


\begin{proof}
If the common kernel foliation has only one closed leaf,
the taut contact circle comes from the discrete
family $\{\omega_n\co n\in\N\}$. By Proposition~\ref{prop:log-n},
the value of $n$ can be recovered from the logarithmic monodromy
of the closed leaf. Alternatively, by
Proposition~\ref{prop:LF}, $n$ can
be read off as the number of petals in the Leau--Fatou flower
of the Poincar\'e return map.

If the common kernel foliation has more than one closed leaf,
the taut contact circle comes from the parametric family
$\{\omega^a\co [a]\in\MM\}$. Corollary~\ref{cor:P-recover}
tells us how to recover $[a]$ from the cooriented foliation.
\end{proof}

\begin{rem}
In the case of the parametric family, we may appeal alternatively
to our topological considerations. The following cases cover
all eventualities, but they are not mutually exclusive.
\begin{itemize}
\item[(i)] If the foliation defines a Seifert fibration with two
singular fibres of multiplicity $p_1,p_2$ (one or both of which
may be equal to~$1$), we determine the unordered pair
\[ \frac{a}{1-a},\;\frac{1-a}{a}\]
from Proposition~\ref{prop:Seifert}.
\item[(ii)] If the leaves foliate tori, that pair of numbers
can be read off from the slope of these foliations by
Proposition~\ref{prop:P-leaves}.
\item[(iii)] If there are only two closed leaves, we recover that
pair of numbers from their logarithmic monodromy, using
Proposition~\ref{prop:log-a}.
\end{itemize}
That pair of numbers determines $a(1-a)$ via
\[ \frac{a}{1-a}+\frac{1-a}{a}=\frac{1}{a(1-a)}-2.\]
\end{rem}

\begin{ack}
We thank Otto van Koert for producing Figures \ref{figure:LF-1}
and~\ref{figure:LF-3}.
\end{ack}

\end{document}